\documentclass[11pt,a4paper,twoside]{article}
\usepackage[latin1]{inputenc}
\usepackage{latexsym}
\usepackage{a4wide}
\usepackage{mathrsfs}   
\usepackage{amsthm}
\usepackage{amssymb}
\usepackage{amsmath}
\usepackage{amsfonts}
\usepackage[textsize=small, disable]{todonotes}
\usepackage{enumerate}

\usepackage{hyperref}

\vfuzz2pt 
\hfuzz2pt 

\hyphenation{ca-rousel pre-pa-ra-tion}
\hyphenation{anal-y-sis}

\newcommand{\e}{{\mathrm e}}

\setcounter{secnumdepth}{3} 
\setcounter{tocdepth}{3}    






\usepackage{fancyhdr}


\newcommand{\Cbold} {{\mathbb C}}






\newcommand{\Hcal}   {\mathcal{H}}




\newcommand{\nin}{\not\in}



\def\1{{\mathchoice {1\mskip-4mu\mathrm l}      
{1\mskip-4mu\mathrm l}
{1\mskip-4.5mu\mathrm l} {1\mskip-5mu\mathrm l}}}

\DeclareMathSymbol{\expect}        {\mathord}{AMSb}{"45}
\DeclareMathSymbol{\expec}        {\mathord}{AMSb}{"45}
\DeclareMathSymbol{\prob}        {\mathord}{AMSb}{"50}
\DeclareMathSymbol{\Ibold}        {\mathord}{AMSb}{"49}
\DeclareMathSymbol{\Nbold}        {\mathord}{AMSb}{"4E}
\DeclareMathSymbol{\Rbold}        {\mathord}{AMSb}{"52}
\DeclareMathSymbol{\Zbold}        {\mathord}{AMSb}{"5A}





\newcommand{\C}{\mathbb C}

\newcommand{\Zd}{\Zbold^d}

\newcommand{\sss}   { \scriptscriptstyle }

\newcommand {\vep}{\varepsilon}

\newcommand{\conn}{\longleftrightarrow}

\newcommand{\ct}[1]     { \stackrel{#1}{\conn} }


\newcommand{\eqarray}   {\begin{eqnarray}}
\newcommand{\enarray}   {\end{eqnarray}}
\newcommand{\lbeq}[1]  {\label{e:#1}}
\newcommand{\refeq}[1] {\eqref{e:#1}}
\newcommand{\eq}{\begin{equation}}
\newcommand{\en}{\end{equation}}
\newcommand{\ben}{\begin{enumerate}}
\newcommand{\een}{\end{enumerate}}
\newcommand{\eqn}[1]{\begin{equation} #1 \end{equation}}
\newcommand{\eqan}[1]{\begin{align} #1 \end{align}}


\newcommand{\nn}{\nonumber}
\newcommand{\nnb}{\nonumber\\}


\renewcommand{\to}{\rightarrow}

\def\Zd{\mathbb{Z}^d}

\def\mA[#1]{ {\bf A}(#1)}
\def\miA[#1]{ {\bf A}^{-1}(#1)}

\def\mD[#1]{{\bf \hat D}(#1)}
\def\mE[#1]{{\bf \hat E}(#1)}
\def\mM[#1]{{\bf \hat M}_1(#1)}
\def\mMa[#1]{{\bf \hat M}_2(#1)}
\def\mE[#1]{{\bf E}_{#1}}
\def\ve[#1]{ {e}_{#1}}
\def\hdx{ \hat D^{(x)}(k)}
\def\Isupx{\mathcal{J}}

\def\v1{{\vec 1}}
\def\mJ{{\bf J}}
\def\mI{{\bf I}}

\newcommand{\dminanimal}{18}
\newcommand{\dmintree}{16}

\newcommand{\dmin}{11}

\def\mPi[#1]{\hat {\bf \Pi}_z(#1)}
\def\mPiwoz[#1]{\hat {\bf \Pi}(#1)}
\def\mPiM[#1]{\hat {\bf \Pi}_{{\sss M}}(#1)}

\def\vXi[#1]{\vec {\hat \Xi}(#1)}
\def\vXiz[#1]{\vec {\hat \Xi}_z(#1)}
\def\vXiM[#1]{\vec {\hat \Xi}_{\sss M}(#1)}
\def\vRM[#1]{\vec {\hat R}_{\sss M}(#1)}
\def\vPsi[#1]{\vec {\hat \Psi}(#1)}
\def\vPsiT[#1]{\vec {\hat \Psi}^T(#1)}
\def\vPsiz[#1]{\vec {\hat \Psi}_z(#1)}
\def\vPsiMT[#1]{\vec {\hat \Psi}^T_{\sss M}(#1)}
\def\vPsiM[#1]{\vec {\hat \Psi}_{\sss M}(#1)}
\def\hatPhiM[#1]{\hat {\Phi}_{\sss M}(#1)}


\def\lowK{\underline {K}_{\sss \Delta F}}

\def\betaaa{\beta_{\sss \mu}}
\def\betaaalow{\underline \beta_{\sss \mu}}
\def\lowaf{\underline \beta_{\sss  \alpha,F}}

\def\upaf{\overline \beta_{\sss  \alpha,F}}
\def\upap{\overline \beta_{\sss  \alpha,\Phi}}
\def\betaap{\beta_{\sss  |\alpha,\Phi|}}
\def\betaRp{\beta_{\sss  |R,\Phi|}}
\def\lowcp{\underline \beta_{\sss  c,\Phi}}
\def\upcp{\overline \beta_{\sss  c,\Phi}}


\def\aaz{\mu_z}
\def\aabz{\bar {\mu}_z}
\def\aa{\mu}
\def\aab{\bar \mu}
\def\aap{\mu_p}
\def\aabp{\bar {\mu}_p}

\def\afz{\alpha_{{\sss F},z}}

\def\apz{\alpha_{{\sss \Phi},z}}

\def\Rfz{R_{{\sss F},z}}

\def\Rpz{R_{{\sss \Phi},z}}

\def\hRpz{\hat {R}_{{\sss \Phi},z}}
\def\hRfz{\hat {R}_{{\sss F},z}}
\def\deltaRfz{\hat {R}_{{\sss F},z}(0;k)}
\def\deltaRpz{\hat {R}_{{\sss \Phi},z}(0;k)}

\def\betadeltaRfzlow{ \underline{\beta}_{\sss \Delta R,F}}
\def\betadeltaRfz{ \beta_{\sss \Delta R,F}}
\def\betadeltaRpz{ \beta_{\sss \Delta R,\Phi}}

\def\cfz{c_{{\sss F},z}}

\def\cpz{c_{{\sss \Phi},z}}

\def\ssss[#1]{{\sss \text{\rm #1}}}
\def\ssc[#1]{{\sss( \text{\rm #1})}}
\def\genC[#1]{\hat {C}_{\mu_z}(#1)}
\def\genG[#1]{\hat {G}_{z}(#1)}
\def\bvtheta[#1]{\vec  {\boldsymbol \theta}_{#1}}

\def\diagRepulsiveLetter{\mathscr}

\def\projIndexsetdir[#1]{{\mathsf {IK} }(#1)}
\def\projIndexsetPoints[#1]{{\mathsf {AB} }(#1)}
\def\projIndexsetNumber[#1]{{\mathsf {NM} }(#1)}
\def\projIndexsetPointsTwo[#1]{{\mathsf {X} }(#1)}


\newcommand{\ii}{{\mathrm i}}

\newtheorem{theorem}{Theorem}[section]

\newtheorem{lemma}[theorem]{Lemma}
\newtheorem{prop}[theorem]{Proposition}

\newtheorem{remark}[theorem]{Remark}
\newtheorem{ass}[theorem]{Assumption}
\newtheorem{definition}[theorem]{Definition}
\numberwithin{equation}{section}
\numberwithin{theorem}{section}

\author{Robert Fitzner\thanks{Department of Mathematics and
        Computer Science, Eindhoven University of Technology,
        5600 MB Eindhoven, The Netherlands.
        {\tt math@fitzner.nl},{\tt rhofstad@win.tue.nl}}
        \and
        Remco van der Hofstad$^*$
    }
\title{Generalized approach to the non-backtracking lace expansion}
\begin{document}
\maketitle

\begin{abstract}
The lace expansion is a powerful perturbative technique to analyze the critical behavior of random spatial processes such as the self-avoiding walk, percolation and lattice trees and animals. The non-backtracking lace expansion (NoBLE) is a modification that allows us to improve its applicability in the nearest-neighbor setting on the $\Zd$-lattice for percolation, lattice trees and lattice animals.

The NoBLE gives rise to a recursive formula that we study in this paper at a general level. We state assumptions that guarantee that the solution of this recursive formula satisfies the infrared bound. In two related papers, we show that these conditions are satisfied for percolation in $d\geq\dmin$, for lattice trees in $d\geq\dmintree$ and for lattice animals in $d\geq\dminanimal$.
\end{abstract}
\noindent
{\bf Keywords}: Lace expansion, nearest-neighbor models, infrared bound, percolation, lattice trees, lattice animals.\\
{\bf Mathematics Subject Classification}: 82B41, 82B43, 60K35.

\tableofcontents

\section{Introduction and results}
\subsection{Motivation}
We use the \emph{non-backtracking lace expansion} (NoBLE) to prove the \emph{infrared bound} for several spatial models. The infrared bound implies \emph{mean-field behavior} for these models. The classical lace expansion is a perturbative technique that can be used to show that the two-point function of a model is a perturbation of the two-point function of simple random walk (SRW). This result was used to prove mean-field behavior for self-avoiding walk (SAW) \cite{HarSla91,HarSla92a}, percolation \cite{HarSla90a, HarSla94}, lattice trees and lattice animals \cite{HarSla90b}, oriented percolation \cite {HofSla02}, the contact process \cite{Saka01,HofSak04} and the Ising model \cite{Saka07} in high dimensions.

Being a \emph{perturbative method} in nature, applications of the lace expansion typically necessitate a small parameter. This small parameter is often the inverse of the degree of the underlying graph. There are two possible approaches to obtain a small parameter. The first is to work in a so-called \emph{spread-out} model, where long-, but finite-range, connections over a distance $L$ are possible, and we take $L$ large. This approach has the advantage that the results hold, for $L$ sufficiently large, all the way down to the critical dimension of the corresponding model. The second approach applies to the simplest and most often studied \emph{nearest-neighbor} version of the model. For the nearest-neighbor model, the degree of a vertex is $2d$, which has to be taken large in order to prove mean-field results.

For the self-avoiding walk (SAW) on the nearest-neighbor lattice, Hara and Slade (1991) \cite{HarSla91, HarSla92a} proved the seminal result that dimension $5$ is small enough for the lace expansion to be applied, thus that mean-field behavior holds in dimension $d\geq 5$. This result is optimal in the sense that we do not expect mean-field behavior of SAW in dimension $4$ and smaller. The dimension 4 thus acts as the upper critical dimension. Results in this direction, proving explicit logarithmic corrections, can be found in a series of papers by Brydges and Slade (some also with Bauerschmidt), see \cite{BrySla14a} and the references therein.

For percolation, we expect mean-field behavior for dimensions $d>6$. Hara and Slade also proved this result down to $d>6$ for the spread-out model with sufficiently large $L$ \cite{HarSla90a, HarSla94}. For the nearest-neighbor setting, Hara and Slade computed that dimension $19$ is large enough. These computations were never published. Through private communication with Takashi Hara, the authors learned that in a recent rework of the analysis and implementation the result was further improved to $d\geq 15$ for percolation.

To obtain the mean-field result also in smaller dimensions above the upper critical dimensions, we rely on the NoBLE. In the NoBLE, we explicitly take the interaction due to the last edge used into account. Doing this, we reduce the size of the involved perturbation drastically and are able to show the mean-field behavior for dimensions closer to the upper critical dimension.

In this paper, we formalize a number of assumptions on the general model and prove that under these assumptions the two-point function obeys the infrared bound. The derivation of the model-dependent NoBLE and the verification of the assumptions are not part of this article. We use the generalized analysis to obtain mean-field behavior for the following models: lattice trees in $d\geq\dmintree$ and lattice animals in $d\geq\dminanimal$ \cite{FitHof13g}, and percolation in $d\geq\dmin$ \cite{FitHof13d}.

A NoBLE analysis consists of four steps: Firstly, for a given model, we derive the perturbative lace expansion. Secondly, we prove diagrammatic bounds on the perturbation. Thirdly, we analyze the expansion to conclude the infrared bound given certain assumptions on the expansion. In our analysis, we derive diagrammatic bounds on the lace-expansion coefficients in terms of simple random walk integrals, which can be computed explicitly. This allows us to compute numerical bounds on the lace-expansion coefficients. The fourth step consists of the numerical computation of these SRW-integrals.

In the accompanying papers \cite{FitHof13g,FitHof13d}, we perform the first two steps for  percolation, as well as for lattice trees and lattice animals. In this paper and in a model-independent way, we perform the analysis in the third step and explain the numerical computations of the fourth step. The numerical computations and the explicit checks of the sufficient conditions for the NoBLE analysis to be successful are done in Mathematica notebooks that are available on the website of Robert Fitzner \cite{FitNoblePage}.

The analysis presented in this paper is an enhancement of the analyses performed by Hara and Slade in \cite{HarSla92b} (see also \cite{HarSla90a, HarSla90b, HarSla92a, HarSla94} for related work by Hara and Slade), and by Heydenreich, the second author and Sakai in \cite{HeyHofSak08}.
This paper is organized as follows: In Section \ref{sec-RW}, we first introduce simple random walk (SRW) and non-backtracking walk (NBW). In Section \ref{sec-results-spec-mod} we state the two basic NoBLE relations that are perturbed versions of relations describing the NBW. Then, we state the results for percolation, lattice trees and lattice animals proved in \cite{FitHof13g,FitHof13d}.

In Section \ref{subsecIntroAna}, we first explain the idea of the proof at a heuristic level. Then, we state all assumptions required to perform the analysis in the generalized setting and state the result we prove in this document, namely the infrared bound. We close in Section \ref{sec-disc} with a discussion of our approach.

In Section \ref{secVeriBootstrap}, we prove the technical cornerstone of the analysis, namely, that we can perform the so-called {\em bootstrap argument}. For the analysis in Sections \ref{subsecIntroAna}-\ref{secVeriBootstrap}, we use a simplified NoBLE form of the two-point function that allows us to present the analysis in a clearer way. Thus, we also state the assumptions
in Section \ref{subsecAss} in terms of this simplified characterization.

In Section \ref{secRewrite}, we explain how to derive the simplified NoBLE equation starting from the NoBLE equation for the two-point function, and reformulate the assumptions of Section \ref{subsecAss} into assumptions on the NoBLE coefficients that are derived and bounded in the accompanying papers \cite{FitHof13g,FitHof13d}.

Section \ref{secNumerics} is devoted to the numerical part of the computer-assisted proof. We explain how we compute bounds on the required SRW-integrals.  In Section \ref{secImpliedBounds}, we explain the ideas to bound the NoBLE coefficients that are used for all models that we consider. We end this paper with a general discussion.

\subsection{Random walks}
\label{sec-RW}
We begin by introducing the random walk models that we perturb around and fix our notation.

\subsubsection{Simple random walk}
\label{subsecintroSRW}
Simple random walk (SRW) is one of the simplest stochastic processes imaginable and has proven to be useful in countless applications. For a review of SRW and related models, we refer the reader to \cite{Hugh95, Lawl10, Spit76}.

An $n$-step \emph{nearest-neighbor simple random walk} on $\Zd$ is an ordered $(n+1)$-tuple $\omega=(\omega_0,\omega_1,\omega_2,\dots, \omega_n)$, with $\omega_i\in\Zd$ and $\|\omega_i-\omega_{i+1}\|_1=1$, where $\|x\|_1=\sum_{i=1}^d |x_i|$. Unless stated otherwise, we take $\omega_0=\vec 0=(0,0,\dots,0)$. The step distribution of SRW is given by
	\begin{align}
	\lbeq{DefDK}
	D(x)=\frac 1 {2d} \delta_{\|x\|_1,1},
	\end{align}
where $\delta$ is the Kronecker delta. For two functions $f,g\colon\Zd \mapsto \Rbold$ and $n\in\Nbold$, we define the convolution $f\star g$ and the $n$-fold convolution $f^{\star n}$ by
	\begin{align}
    \lbeq{definition-convolution}
	(f\star g)(x)&= \sum_{y\in\Zd}f(y)g(x-y),
	\end{align}
and
	\begin{align}
    \lbeq{definition-multi-convolution}
	f^{\star n}(x)&=(f^{\star (n-1)}\star f)(x)=(f\star f \star f \star \dots \star f) (x).
	\end{align}
We define $p_n(x)$ as the number of $n$-step SRWs with $\omega_n=x$, so that, for $n\geq 1$,
	\begin{eqnarray}
	\lbeq{SRWRecScheme}
	p_n(x) =\sum_{y\in\Zd} 2d D(y)p_{n-1}(x-y) =2d (D \star p_{n-1})(x)	= (2d)^{n} D^{\star n}(x).
	\end{eqnarray}
We analyze this function using Fourier theory. For an absolutely summable function $f$, we define the Fourier transform of $f$ by
	\begin{eqnarray}
	\lbeq{defFourier}
	\hat f (k) =\sum_{x\in\Zd} f(x) \e^{\ii k\cdot x}\qquad \text{for}\qquad k\in [-\pi,\pi]^d,
	\end{eqnarray}
where $k\cdot x=\sum^d_{i=1} k_ix_i$, with inverse
	\begin{eqnarray}
	\label{defFourierInverse}
	f (x) = \int_{(-\pi,\pi)^d} \hat f(k) \e^{-\ii k\cdot x} \frac {d^d k}{(2\pi)^d}.
	\end{eqnarray}
We use the letter $k$ exclusively to denote values in the Fourier dual space $(-\pi,\pi)^d$. We note that the Fourier transform of $f^{\star n}(x)$ is given by $\hat f(k)^n$ and conclude that
	\begin{eqnarray}
	\lbeq{SRWFourier}
	\hat p_n(k) =(2d)^{n} \hat D^{n}(k),\qquad \text{ with }\qquad
	\hat D(k)=\frac 1 d \sum_{\iota=1}^d \cos(k_\iota).
	\end{eqnarray}
The {\em SRW two-point function} is given by the generating function of $p_n$, i.e., for $z\in \C$,
	\begin{eqnarray}
	\lbeq{genSRW}
	C_z(x)&=&\sum_{n=0}^\infty p_n(x)z^n,\qquad\text{and}
	\qquad \hat C_z(k) =\frac {1}{1-2dz \hat D(k)}
	\end{eqnarray}
in $x$-space and $k$-space, respectively. We denote the SRW \emph{susceptibility} by
	\begin{eqnarray}
    \lbeq{susceptibility-SRW}
	\chi^{\sss\rm SRW}(z)= \hat C_z(0)&=& \frac {1} {1-2dz},
	\end{eqnarray}
with \emph{critical point} $z_c=1/(2d)$. By the form of $\hat C_z(k)$ in {\refeq{genSRW}} and using that $1-\cos(t)\approx t^2/2$ for small $t\in\Rbold$, we see that $\hat C_{z_c}(k)=[1-\hat D(k)]^{-1}\approx 2d / \|k\|^2_2$ for small $k$, where $\|\cdot\|_2$ denotes the Euclidean norm. Since small $k$ correspond to large wave lengths, the above asymptotics is sometimes called the {\it infrared} asymptotics. The main aim in this paper is to formulate general conditions under which the infrared bound is valid for general spatial models.

\subsubsection{Non-backtracking walk}
\label{subsecintroNRW}
If an $n$-step SRW $\omega$ satisfies $\omega_i\not=\omega_{i+2}$ for all $i=0,1,2,\dots,n-2$, then we call $\omega$ \emph{non-backtracking}. In order to analyze non-backtracking walk (NBW), we derive an equation similar to \refeq{SRWRecScheme}. The same equation does not hold for NBW as it neglects the condition that the walk does not revisit the origin after the second step. For this reason, we introduce the condition that a walk should not go in a certain direction $\iota$ in its first step.

We exclusively use the Greek letters $\iota$ and $\kappa$ for values in $\{-d,-d+1,\dots,-1,1,2,\dots,d\}$ and denote the unit vector in direction $\iota$ by $\ve[\iota]\in\Zd$, e.g.\ $(\ve[\iota])_i=\text{sign}(\iota) \delta_{|\iota|,i}$.

Let $b_n(x)$ be the number of $n$-step NBWs with $\omega_0=0,\omega_n=x$. Further, let $b^{\iota}_{n}(x)$ be the number of $n$-step NBWs $\omega$ with $\omega_n=x$ and $\omega_1 \not =\ve[\iota]$. Summing over the direction of the first step we obtain, for $n\geq 1$,
	\begin{align}
	\lbeq{NBWRecScheme1}
	b_{n}(x) &= \sum_{\iota\in\{\pm1,\dots,\pm d\}} b^{\iota}_{n-1}(x+\ve[\iota]).
	\end{align}
Further, we distinguish between the case that the walk visits $-\ve[\iota]$ in the first step or not to obtain, for $n\geq 1$,
	\begin{align}
	\lbeq{NBWRecScheme2}
	b_{n}(x)&=b^{-\iota}_{n}(x) + b^{\iota}_{n-1}(x+\ve[\iota]).
	\end{align}
The {\em NBW two-point functions} $B_z$ and $B^{\iota}_{z}$ are defined as the generating functions of $b_n$ and $b_n^{\iota}$, respectively, i.e.,
	\begin{align}
    \lbeq{NBWRecScheme3}
	B_{z}(x)=&\sum_{n=0}^{\infty} b_n(x)z^n,\qquad \qquad
	B^{\iota}_{z}(x)=\sum_{n=0}^{\infty} b^{\iota}_n(x)z^n.
	\end{align}
Using \refeq{NBWRecScheme1} and \refeq{NBWRecScheme2} for the two-point functions gives
	\begin{align}
	\lbeq{NBWScheme}
	B_{z}(x)=&\delta_{0,x}+z\sum_{\iota\in\{\pm1,\dots,\pm d\}} B^{\iota}_{z}(x+\ve[\iota]),
	\qquad 	
	B_{z}(x)=B^{-\iota}_{z}(x) + z B^{\iota}_{z}(x+\ve[\iota]).
	\end{align}
Taking the Fourier transform, we obtain
	\begin{align}
	\lbeq{NBWScheme-fou}
	\hat B_{z}(k)=&1+\sum_{\iota\in\{\pm1,\dots,\pm d\}} z\e^{-\ii k_\iota}\hat B^{\iota}_{z}(k),
		\qquad
	\hat B_{z}(k)=\hat B^{-\iota}_{z}(k) + z\e^{-\ii k_\iota}\hat B^{\iota}_{z}(k).
	\end{align}
In this paper, we use $\Cbold^{2d}$-valued and $\Cbold^{2d}\times\Cbold^{2d}$-valued functions.
For a clear distinction between scalar-, vector- and matrix-valued quantities, we always write $\Cbold^{2d}$-valued functions with a vector arrow (e.g.\ $\vec v$) and matrix-valued functions with bold capital letters (e.g.\ ${\bf M}$). We do not use $\{1,2,\dots,2d\}$ as index set for the elements of a vector or a matrix, but use $\{-d,-d+1,\dots,-1,1,2,\dots,d\}$ instead. Further, for a $k\in(-\pi,\pi)^d$ and negative index $\iota\in\{-d,-d+1,\dots,-1\}$, we write $k_\iota=-k_{|\iota|}$.

We denote the identity matrix by $\mI\in\Cbold^{2d\times 2d}$ and the all-one vector by $ \v1 =(1,1,\dots,1)^T\in\Cbold^{2d}$. Moreover, we define the matrices $\mJ,\mD[k]\in\Cbold^{2d\times 2d}$ by
	\begin{eqnarray}
    \lbeq{NBW-MatrixDefinition}
	(\mJ)_{\iota,\kappa}=\delta_{\iota,-\kappa}\qquad\quad\text{ and }\qquad\quad
	(\mD[k])_{\iota,\kappa}=\delta_{\iota,\kappa} \e^{\ii k_\iota}.
	\end{eqnarray}
We define the vector $\vec {\hat B}_{z}(k)$ with entries $(\vec {\hat B}_{z}(k))_\iota=\vec {\hat B}^{\iota}_{z}(k)$ and rewrite \refeq{NBWScheme-fou} as
	\begin{eqnarray}
	\lbeq{NBWScheme-k}
	\hat B_{z}(k)&=1+z \v1^T \mD[-k] \vec {\hat B}_{z}(k),
	\qquad	
	\hat B_{z}(k)\v1 =\mJ\vec {\hat B}_{z}(k) +  z\mD[-k] \vec {\hat B}_{z}(k).
	\end{eqnarray}
We use $\mJ\mJ=\mI$ and $\mD[k]\mD[-k]=\mI$ to modify the second equation as follows:
\eqn{
    \hat B_{z}(k)\v1 =\mJ\mD[k]\mD[-k]\vec {\hat B}_{z}(k) +  z\mJ\mJ\mD[-k] \vec {\hat B}_{z}(k)=\mJ\big(\mD[k]+z\mJ\big) \mD[-k]\vec {\hat B}_{z}(k)
	}
which implies that
	\eqn{
	\lbeq{vecB-B-rel}
	 \mD[-k]\vec {\hat B}_{z}(k) = \hat B_{z}(k)\left[\mD[k]+z\mJ\right]^{-1}\mJ\v1.
	}
We use $\mJ\v1=\v1$ and then combine \refeq{vecB-B-rel} with the first equation in \refeq{NBWScheme-k} to obtain
	\begin{eqnarray}
	\lbeq{NBWGen}
	\hat B_{z}(k)&=& \frac {1} {1 -z\v1^T\left[\mD[k]+z \mJ\right]^{-1}\v1}.
	\end{eqnarray}
Then, we use that
	\begin{eqnarray}
	\lbeq{invIDJ}
	\left[\mD[k]+z \mJ\right]^{-1}&=& \frac 1 {1-z^2} \left(\mD[-k]-z \mJ\right),
	\end{eqnarray}
and $\v1^T\mD[-k]\v1=2d\hat D(k)$ to conclude that
	\begin{eqnarray}
	\lbeq{NBWGenSolved}
	\hat B_{z}(k)= \frac {1}{1-2d z\frac {\hat D(k)-z}{1-z^2}}
	= \frac {1-z^2} {1+(2d-1)z^2-2dz\hat D(k)}.
	\end{eqnarray}
The NBW susceptibility is $\chi^{\rm\sss NBW}(z)=\hat B_{z}(0)$ with critical point $z_c=1/(2d-1)$.
The NBW and SRW two-point functions are related by
	\begin{eqnarray}
	\nn
	\hat B_{z}(k)&=& \frac {1-z^2}{1+(2d-1)z^2}\frac {1} {1
-\frac{2dz}{1+(2d-1)z^2}\hat D(k)}\\
	&=&\frac {1-z^2}{1+(2d-1)z^2} \hat C_\frac{z}{1+(2d-1)z^2}(k),
	\end{eqnarray}
so that
	\eqn{
	\lbeq{SRWtoNBWlink}
	\hat B_{1/(2d-1)}(k)=\frac {2d-2}{2d-1}\hat{C}_{1/2d}(k)
	=\frac {2d-2}{2d-1} \frac 1 {1-\hat D(k)}.
	}
This link allows us to compute values for the NBW two-point function in $x$- and $k$-space.
A detailed analysis of the NBW, based on such ideas, can be found in \cite{FitHof13a}.


\subsection{General setting}
\label{subsecBasisRec}
We consider general models defined on the $d$-dimensional hypercubic lattice $\Zd$. For these models, the \emph{two-point function} $G_z\colon \Zd\mapsto \Rbold$ is the central quantity. The two-point function is defined for parameters $z\in[0,z_c)$ where $z_c$ acts as the \emph{critical value}. As for the SRW and NBW, the susceptibility $\hat G_z(0)$ diverges as $z$ approaches $z_c$ from below. The behavior of $G_z$ and $\hat{G}_z$ as $z\nearrow z_c$ is of special interest. We use the NoBLE to prove that the two-point function of the general model is a small perturbation of the critical NBW two-point function \refeq{SRWtoNBWlink} and thereby obeys the \emph{infrared bound}.

To do this, we define $G^\iota_z$ as the two-point function of the model where $e_{\iota}$ is being avoided. The precise definition depends on the model. For NBW, $e_{\iota}$ is avoided in the first step, for percolation $G^\iota_z$ is the two-point function of the model defined on the graph $\Zd\setminus\{\ve[\iota]\}$. For the NBW, these two-point functions are linked by the relations \refeq{NBWScheme}. In the NoBLE, we adapt these two relations for the general model with model-dependent perturbations, and bound the arising perturbation coeffcients. For $d\geq 2$, the NoBLE gives rise to functions $\Xi_z,\Xi^{\iota}_z,\Psi^{\iota}_z$ and $\Pi^{\iota,\kappa}_z$ for  $\iota,\kappa\in\{\pm1,\dots,\pm d\}$, all mapping from $\Zd$ to $\Rbold$, and a function $\aaz\colon \Rbold_+ \to \Rbold_+$, such that, for all $x\in\Zd$ and $z\in[0,z_c)$,
	\begin{align}
	\lbeq{basicgeneral1}
	G_z(x)&=\delta_{0,x}+\Xi_z(x)
	+\aaz\sum_{y\in\Zd}\sum_{\iota\in\{\pm 1, \dots, \pm d\}} (\delta_{0,y}
	+\Psi^{\iota}_{z}(y)) G^{\iota}_{z}(x-y+\ve[\iota]),\\
	\lbeq{basicgeneral2}
	G_z(x)&=G^{\iota}_z(x)+ \aaz G^{-\iota}_{z}(x-\ve[\iota])
	+ \sum_{y\in\Zd} \sum_{\kappa\in \{\pm 1, \dots, \pm d\}} \Pi^{\iota,\kappa}_z(y)
	G^\kappa_{z}(x-y+\ve[\kappa])+\Xi^{\iota}(x).
	\end{align}
In our applications, the variable $\mu_z$ is closely related to the main parameter $z$, but it is not equal.
Therefore, in the analysis we use a second parameter $\aabz$ that allows us to control the critical value in the models under consideration.
For example, for percolation we use
	\eqn{
	\lbeq{parameters-perc}
	\aabp=p,\qquad \qquad \aap=p\prob_p(\ve[1]\text{ not connected to } 0 \mid \text{ the bond $(0,\ve[1])$ is vacant}).
	}
(See Section \ref{secPercolation}, where the percolation model is formally introduced.)

Our goal is to understand the behavior of $G_z$, where we consider the functions $\aaz,$ $\Xi_z,$ $\Xi^{\iota}_z,$ $\Psi^{\iota}_z,\Pi^{\iota,\kappa}_z$ as given. Applying the Fourier transformation on \refeq{basicgeneral1} and \refeq{basicgeneral2} gives
	\begin{align}
	\lbeq{basicgeneralFourier1}
	\hat G_z(k)&= 1+\hat \Xi_z(k) +
	\aaz \sum_{\iota\in\{\pm 1, \dots, \pm d\}} (1+\hat \Psi^{\iota}_z(k))
	\e^{-\ii k_\iota} \hat G^{\iota}_{z}(k),\\
	\lbeq{basicgeneralFourier2}
	\hat G_z(k)&=\hat G^{\iota}_z(k)+
	\aaz \e^{\ii k_\iota} \hat G^{-\iota}_{z}(k)
	+ \sum_{\kappa\in \{\pm 1, \dots, \pm d\}} \hat\Pi^{\iota,\kappa}_z(k)
	\e^{-\ii k_\kappa} \hat G^\kappa_{z}(k)+\hat \Xi^{\iota}(k).
	\end{align}
We define the vectors $\vec {\hat G}_z(k),\vXi[k]$ and $\vPsi[k]$ and the matrix $\mPi[k]$ by
	\begin{eqnarray}
	\big(\vec {\hat G}_z(k)\big)_\iota =\hat G^{\iota}_z(k), \quad
	\big(\vPsi[k]\big)_\iota=\hat \Psi^\iota_z(k),
	\quad \big(\vXi[k]\big)_\iota=\hat \Xi^\iota_z(k),
	\quad \big(\mPi[k]\big)_{\iota,\kappa}=\hat \Pi^{\iota,\kappa}_z(k).
	\end{eqnarray}
Then, we can rewrite \refeq{basicgeneralFourier2} using vectors and matrices as
	\begin{eqnarray}
	\hat G_z(k) \v1 &=&\vec {\hat G}_z(k)+\aaz \mD[k]\mJ \vec {\hat G}_z(k) +
	\mPi[k]\mD[-k] \vec {\hat G}_z(k)+\vXi[k].
	\end{eqnarray}
We obtain from this that
	\begin{align}
	\vec {\hat G}_z(k) =& \mD[k]\left[ \mD[k] + \aaz\mJ +\mPi[k]\right]^{-1} (\hat G_z(k) \v1 -\vXi[k]).
	\end{align}
That this matrix inverse is well defined will be shown in Section \ref{secRewriteProperties}. Next, we rewrite \refeq{basicgeneralFourier1} in vector-matrix notation and solve for $\hat G_z(k)$ as
	\begin{align}
	 \nonumber
	\hat G_z(k)=& 1+\hat \Xi_z(k) + \aaz(\v1+\vPsiz[k])^T \mD[-k] \vec {\hat G}_z(k)\\
	\nonumber
	=& 1+\hat \Xi_z(k) + \aaz(\v1+\vPsiz[k])^T\left[ \mD[k] + \aaz\mJ
	+\mPi[k]\right]^{-1} (\hat G_z(k) \v1 -\vXiz[k])\\
	\lbeq{generalForm}
	=& \frac {1+\hat \Xi_z(k)- \aaz(\v1+\vPsiz[k])^T\left[ \mD[k] + \aaz\mJ +\mPi[k]\right]^{-1} \vXiz[k]}
	{1-\aaz(\v1+\vPsiz[k])^T \left[ \mD[k] + \aaz\mJ +\mPi[k]\right]^{-1} \v1}=\frac {\hat \Phi_z(k)}{1-\hat F_z(k)},
	\end{align}
with
	\begin{align}
      \lbeq{DefPhi}
	\hat \Phi_z(k) :=& 1+\hat \Xi_z(k)- \aaz(\v1+\vPsiz[k])^T\left[ \mD[k] + \aaz\mJ +\mPi[k]\right]^{-1} \vXiz[k],\\
	\lbeq{DefFFunction}
	\hat F_z(k) :=&\aaz(\v1 + \vPsiz[k])^T\left[ \mD[k] + \aaz\mJ +\mPi[k]\right]^{-1} \v1.
	\end{align}
When comparing \refeq{generalForm}-\refeq{DefFFunction} to its equivalent for NBW in \refeq{NBWGen}, we see that \refeq{generalForm} reduces to \refeq{NBWGen} when taking $\aaz=z$, $\hat \Xi_z(k)=(\vPsiz[k])_\kappa=(\mPi[k])_{\iota,\kappa}=(\vXiz[k])_\kappa=0$ for all $z,k,\iota,\kappa$.
Our analysis is based on the intuition that the NoBLE coefficients are small in high dimensions. The majority of our work is to quantify this statement.

\paragraph{Rewrite of the two-point function.}
We use another characterization of the two-point function $\hat G_z(k)$ to perform the analysis. We extract all contributions involving constants and $\hat{D}(k)$, by defining $\cfz,\cpz,\apz,\afz,\hRpz(k),\hRfz(k)$ such that
	\begin{eqnarray}
	\lbeq{DefPhi-simple}
	\hat \Phi_z(k) &:=& \cpz+\apz\hat D(k) +\hRpz(k), \\
	\lbeq{DefFFunction-simple}
	\hat F_z(k) &:=& \cfz+\afz \hat D(k) +\hRfz(k).
	\end{eqnarray}
Then,
	\begin{eqnarray}
	\nonumber
	\hat G_z(k)&=& \frac{\hat \Phi_z(k)}{1-\hat F_z(0)+\hat F_z(0)-\hat F_z(k)}\\
	\lbeq{generalForm-simple}
	&=&\frac{\cpz+\apz \hat D(k) +\hRpz(k)}{\hat \Phi_z(0)/\hat G_z(0)+
	\afz[1-\hat D(k)]+\hRfz(0)-\hRfz(k)}.
	\end{eqnarray}
In Section \ref{secRewrite}, we show how we transform \refeq{DefPhi}-\refeq{DefFFunction} into \refeq{DefPhi-simple}-\refeq{DefFFunction-simple}. Up to that point, we only work with the representation \refeq{DefPhi-simple}-\refeq{DefFFunction-simple} as it simplifies and shortens the analysis.
The quantity in \refeq{generalForm-simple} reduces to the NBW-equivalent \refeq{NBWGenSolved}, when we set $\apz=\hRfz(k)=\hRpz(k)=0$ and $\cpz=1-z^2,\afz=2d z, \hat \Phi_z(0)/\hat G_z(0)=1+(2d-1)z^2-2d z$.

\subsection{Results for specific models}
\label{sec-results-spec-mod}
In this section, we describe the results that our method allows to prove. These results are proved in two accompanying papers \cite{FitHof13g,FitHof13d}.

\subsubsection{Percolation}
\label{secPercolation}
Percolation is a central model in statistical physics and is a very active field of research since its rigorous definition by Broadbent and Hammersley in 1957 \cite{BroHam57}, where this model was proposed to describe the spread of a fluid through a medium. General references for percolation are \cite{BolRio06,Grim99, Hugh96}. A review of recent results can be found in \cite{GriKes12,HeyHof15} and the references therein.
We consider Bernoulli percolation on the hypercubic lattice. We use the definition of \cite[Section 9]{Slad06}:
To each nearest-neighbor bond $\{x,y\}$ we associate an independent Bernoulli random variable $n_{\{x,y\}}$  which takes the value $1$ with probability $p$ and the value $0$ with probability $1-p$, where $p\in[0,1]$.  If $n_{\{x,y\}}=1$, then we say that the bond $\{x,y\}$ is \emph{open}, and otherwise we say that it is \emph{closed}. A configuration is a realization of the random variables of all bonds. The joint probability distribution is denoted by $\prob_p$ with corresponding expectation $\expec_p$.

We say that $x$ and $y$ are connected, denoted by $x\conn y$, when there exists a path consisting of open
bonds connecting $x$ and $y$, or when $x=y$. We denote by ${\mathscr{C}}(x)$ the random set of vertices connected to $x$ and denote its cardinality by $|{\mathscr{C}}(x)|$.
The \emph{two-point function} $\tau_p(x)$ is the probability that $0$ and $x$ are connected, i.e.,
	\begin{align}
	\tau_p(x)=\prob_p (0 \conn x).
	\end{align}
By translation invariance $\prob_p (x \conn y)=\tau_p(x-y)$ for all $x,y\in\Zd$.
We define the \emph{percolation susceptibility}, or expected cluster size, by
	\begin{align}
	\chi(p)=\sum_{x\in\Zd} \tau_p(x) =\expec_p\left[|{\mathscr{C}}(0)|\right].
	\end{align}
We say that the system \emph{percolates} when there exists a cluster ${\mathscr{C}}(x)$ such that $|{\mathscr{C}}(x)|=\infty$. We define $\theta(p)$ as the probability that the origin is part of an infinite cluster, i.e.,
	\begin{align}
	\theta(p)=\prob_p (|{\mathscr{C}}(0)|=\infty).
	\end{align}
For $d\geq 2$, there exists a \emph{critical value} $p_c\in(0,1)$ such that
	\begin{eqnarray}
	p_c(d)=\inf\{p\mid \theta(p)>0\}.
	\end{eqnarray}
Menshikov in 1986 \cite{Mens86}, as well as Aizenmann and Barsky in 1987 \cite{AizBar87}, have proven that the critical value can alternatively be characterized as
	\begin{eqnarray}
	p_c(d)=\sup\left\{p\mid \chi(p)<\infty\right\}.
	\end{eqnarray}
The \emph{percolation probability} $p\mapsto \theta(p)$ is clearly continuous on $[0,p_c)$, and it is also continuous (and even infinitely differentiable) on $(p_c,1]$ by the results of \cite{BerKea84}
(for infinite differentiability of $p\mapsto \theta(p)$ for $p\in (p_c,1]$, see \cite{Russ78}).
Thus, the continuity of $p\mapsto \theta(p)$ on $\Zd$ is equivalent to the statement that $\theta(p_c(d))=0$.

\paragraph{Critical exponents.}
We introduce three \emph{critical exponents} for percolation. It is widely believed that the following limits exist in all dimensions:
	\begin{align}
	\lbeq{percolation-conj-gamma}
	\gamma&=-\lim_{p\nearrow p_c} \frac {\log \chi(p)}{\log (|p-p_c|)},\\
	\lbeq{percolation-conj-beta}
	\beta&=-\lim_{p\searrow p_c} \frac {\log \theta(p)}{\log (|p-p_c|)},\\
	\lbeq{percolation-conj-delta}
	1/\delta&=-\lim_{n\rightarrow \infty} \frac {\log \prob_{p_c} (|{\mathscr{C}}(0)|\geq n)}{\log{n}}.
	\end{align}
A strong form of \refeq{percolation-conj-gamma}, \refeq{percolation-conj-beta} and \refeq{percolation-conj-delta} is that there exist constants $c_\chi, c_\theta,c_\delta\in(0,\infty)$ such that
	\begin{align}
	&\chi(p)=(1+o(1)) c_\chi (p_c-p)^{-\gamma}\qquad\qquad~ \text{ as }p\nearrow p_c,\\
	&\theta(p)=(1+o(1)) c_\theta (p-p_c)^\beta\qquad\qquad\quad~ \text{ as }p\searrow p_c,\\
	&\prob_{p_c} (|{\mathscr{C}}(0)|\geq n)=(1+o(1)) c_\delta n^{-1/\delta} \qquad \text{ as }n\rightarrow \infty,
	\end{align}
and is expected to hold in all dimensions, expect for the critical dimension $d=d_c$, where logarithmic corrections are predicted.
The constants $c_\chi, c_\theta$ and $c_\delta$ depend on the dimension. We say that these exponents exist {\em in the bounded-ratio sense} when the asymptotics is replaced with upper and lower bounds with different positive constants. Further, it is believed that there exist $\eta$ and $c_1, c_2$ such that
	\begin{align}
	\lbeq{Gx-perc}
	\tau_{p_c}(x)&=(1+o(1))\frac {c_1} {|x|^{d-2+\eta}}, &
	\hat \tau_{p_c}(k)=(1+o(1)) \frac {c_2} {|k|^{2-\eta}},
	\end{align}
where  $c_1$ and $c_2$ depend on the dimension only. For percolation, the existence of many more exponents is conjectured and partially also proven. See \cite[Section 2.2]{Grim99} for more details. Our main result for percolation is formulated in the following theorem:

\begin{theorem}[Infrared bound for percolation]
\label{thm-perc}
The infrared bound $\hat \tau_{p_c}(k)\leq A_2(d)/[1-\hat{D}(k)]$ for some constant $A_2(d)$ holds for nearest-neighbor percolation in dimension $d$ satisfying $d\geq \dmin$.
As a result, the critical exponents $\gamma,\beta,\delta$ and $\eta$ exist in the bounded-ratio sense and take their mean-field values $\gamma=\beta=1$, $\delta=2$ and $\eta=0$.
\end{theorem}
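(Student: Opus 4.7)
The plan is to follow the four-step NoBLE programme outlined in the introduction and, in doing so, instantiate the general machinery of this paper to the case of nearest-neighbour percolation. First, I would derive the NoBLE equations \refeq{basicgeneral1}--\refeq{basicgeneral2} for percolation, identifying the concrete model-specific coefficients $\Xi_z,\Xi^{\iota}_z,\Psi^{\iota}_z,\Pi^{\iota,\kappa}_z$ together with the parameter $\aaz$ (cf.\ the percolation choice in \refeq{parameters-perc}). The underlying identities come from the standard inclusion--exclusion analysis of the connection event $\{0\conn x\}$ with respect to the pivotal edges at the origin, but modified so that one explicitly tracks the direction of the first pivotal edge rather than merely its existence; this is what produces the direction-indexed matrices $\mPi[k],\vPsi[k],\vXi[k]$ rather than the scalar coefficient of the classical lace expansion, and it is precisely this bookkeeping that causes the perturbation to be measured against NBW instead of SRW.

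Second, I would establish the diagrammatic bounds for these coefficients in terms of triangle-like diagrams built from the two-point function and its non-backtracking variants, exactly as in the accompanying paper \cite{FitHof13d}. These diagrams, combined with the convolution structure, will let me bound each coefficient by SRW (and NBW) integrals that can be computed numerically through the link \refeq{SRWtoNBWlink}. At this stage the task is reduced to a finite list of explicit quantities to estimate, and one must show that under the \emph{a priori} hypothesis that $\hat G_z(k)$ is close to $\hat B_{z_c}(k)$ (or, equivalently, to $\hat C_{1/2d}(k)$) the coefficients are small in an appropriate norm.

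Third, with the diagrammatic bounds in hand, I would invoke the main theorem of the present paper: the general analysis in Sections \ref{subsecIntroAna}--\ref{secVeriBootstrap} tells us that if the NoBLE coefficients satisfy the assumptions stated in Section \ref{subsecAss}, then the two-point function obeys the infrared bound. The mechanism is the bootstrap argument of Section \ref{secVeriBootstrap}: one defines a bootstrap function measuring the closeness of $\hat G_z$ to its NBW-like reference via the representation \refeq{generalForm-simple}, verifies that this function is continuous and takes only values in a forbidden interval if it ever leaves its initial regime, and uses the smallness of the coefficients (a consequence of the diagrammatic bounds together with high $d$) to propagate the improvement. This yields $\hat\tau_{p_c}(k)\le A_2(d)/[1-\hat D(k)]$ provided the numerical verification of the assumptions succeeds for $d\ge \dmin$. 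The actual check of these inequalities is performed by the Mathematica notebooks described in Section \ref{secNumerics}, and the dimension $\dmin$ is the smallest value for which the computed bounds are compatible with all hypotheses of the abstract theorem.

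Finally, I would derive the critical-exponent consequences from the infrared bound. The finiteness of the triangle diagram $\triangle_{p_c}=\sum_{x,y}\tau_{p_c}(x)\tau_{p_c}(y-x)\tau_{p_c}(-y)$ follows from $\hat\tau_{p_c}\le A_2/[1-\hat D]$ together with $d\ge \dmin>6$, since $[1-\hat D(k)]^{-2}$ is integrable on the torus in those dimensions. Having $\triangle_{p_c}<\infty$ then feeds directly into the Aizenman--Newman bound giving $\gamma=1$ in the bounded-ratio sense, into the Barsky--Aizenman argument giving $\delta=2$ and $\beta=1$, and into the Hara--Slade-type Fourier analysis of the NoBLE equation to give $\eta=0$. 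The main obstacle throughout is the third step: the bootstrap requires all the ``small'' coefficients to be genuinely small \emph{uniformly} in $z\in[0,z_c)$ and in $k$, and translating the diagrammatic estimates into numerical bounds tight enough to close the bootstrap down to $d=\dmin$ is where the analysis can fail if the NoBLE is not set up with enough care; this is precisely why the non-backtracking refinement, rather than the classical lace expansion, is needed to reach $\dmin$ rather than $d\ge 15$.
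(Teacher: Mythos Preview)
Your proposal is correct and follows essentially the same route as the paper: Theorem~\ref{thm-perc} is proved by combining the model-independent bootstrap analysis of this paper (Theorem~\ref{centralresult} via Proposition~\ref{succesfulbootsprop}) with the percolation-specific NoBLE derivation, diagrammatic bounds, and numerical verification carried out in the companion paper~\cite{FitHof13d}, after which the triangle condition yields the mean-field exponents via Aizenman--Newman and Barsky--Aizenman. One small slip: the triangle diagram in Fourier space is $\int \hat\tau_{p_c}(k)^3\,dk$, so the relevant integrability is that of $[1-\hat D(k)]^{-3}$ (not $[1-\hat D(k)]^{-2}$), which indeed holds for $d>6$.
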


Theorem \ref{thm-perc} is proved by combining the model-independent results proved in this paper, with the model-dependent results as proved in \cite{FitHof13d}. There, we also state and prove related results on percolation, such as the existence of the so-called incipient infinite cluster and the existence of one-arm critical exponents. Further, we derive numerical upper bounds on the critical percolation probability $p_c(d)$.

The critical exponents for percolation have received considerable attention in the literature. For the critical exponents, it is known that $\beta\leq 1$ and $\gamma\geq 1$ for all $d\geq 2$, see Chayes and Chayes \cite{ChaCha87b} and Aizenman and Newman \cite{AizNew84}. Further, we know that if $\beta$ and $\gamma$ exists, then $\beta\in(0,1]$ and $\gamma\in [1,\infty)$, see \cite[Section 10.2 and 10.4] {Grim99}. In high dimensions, we expect \emph{mean-field behavior} for percolation. Namely, we expect that
for all dimensions $d>6$, the critical exponents correspond to the exponents of the regular tree given by $\gamma=\beta=1$, and $\eta=0$.
(For the definition of $\eta$ for percolation on trees, see Grimmett \cite[Section 10.1]{Grim99}.) Alternatively, we can interpret $\gamma=\beta=1$, and $\eta=0$ as the critical exponents for branching random walk, see the discussion in \cite{HeyHof15}. An important step to prove mean-field behavior for percolation is the result of Aizenman and Newman \cite{AizNew84} that the finiteness of the triangle diagram, defined by
	\begin{align}
	\Delta(p_c)=(\tau_{p_c} \star \tau_{p_c} \star \tau_{p_c})(0),
 	\end{align}
implies that $\gamma= 1$. This triangle condition also implies that $\beta=1$, see \cite{BarAiz91}. In particular, this implies that $p\mapsto \theta(p)$ is continuous.

Hara and Slade \cite{HarSla89a} use the lace expansion to prove that $\eta=0$ in Fourier space as well as the finiteness of triangle diagram for $d\geq 7$ in the spread-out setting with a sufficiently large parameter $L$. In the spread-out setting all bonds $\{x,y\}$ with $|x-y|\leq L$ are independently open or closed. This is an optimal result in the sense that mean-field behavior is not expected in $d\leq 6$, see \cite{Tou74}, where Toulouse argues that the upper critical dimensions $d_c$, above which we can expect mean-field behavior, equals $6$.
For mathematical arguments why $d_c=6$, see  Chayes and Chayes \cite{ChaCha87b}, Tasaki \cite{Tasa87} or \cite[Section 11.3.3]{HeyHof15}.

For the nearest-neighbor setting, Hara and Slade proved mean-field behavior in sufficiently high dimensions \cite{HarSla89a}. Later, they numerically verified that $d= 19$ is sufficiently high by adapting the proof of the seminal result that self-avoiding walk in dimensions $d\geq 5$ satisfies the infrared bound. In private communication with Takashi Hara, the authors have learned that in a recent improvement of their numerical methods, the mean-field result was established for $d\geq 15$, and thus this implies Theorem \ref{thm-perc}. The proof for both these results ($d\geq 15$ and $d\geq 19$) were never published.

Let us briefly explain how percolation fits into our general framework.  For percolation, in \cite{FitHof13d}, we perform the non-bracktracking lace expansion (NoBLE) for the two-point function $\tau_p(x)$ and $\tau_p^\iota(x)=\prob_p(0\conn x \text{ without using }e_\iota)$. Further, we bound the coefficients arising in this expansion and check that all general assumptions used in the present paper are satisfied.
In the NoBLE, we further identify that $\aap=p\prob_p(e\in {\mathscr{C}}(0)\mid \{0,e\}\text{ is vacant})$. For our analysis we also require a bound on $\aabp=p$ (recall \refeq{parameters-perc}).

\subsubsection{Lattice trees and animals}
\label{secIntroLT}
A nearest-neighbor \emph{lattice tree} (LT) on $\Zd$ is a finite, connected set of nearest-neighbor bonds containing no cycles (closed loops).
A nearest-neighbor \emph{lattice animal} (LA) on $\Zd$ is a finite, connected set of nearest-neighbor bonds, which may or may not contain cycles.
Although a tree/animal $A$ is defined as a set of bonds, we write $x\in A$, for $x\in\Zd$, to denote that $x$ is an element of a bond of $A$. The number of bonds in $A$ is denoted by $|A|$.
We define $t^{\sss (a)}_n(x)$ and $t^{\sss (t)}_n(x)$ to be the number of LAs and LTs, respectively, that consist of exactly $n$ bonds and contains the origin and $x\in\Zd$. We study LA and LT using the \emph{one-point function} $g_z$ and the \emph{two-point function} $\bar G_z$ defined as
	\begin{align}
	g^{\sss (a)}_z=\bar G^{\sss (a)}_z(0)
	=\sum_{A\colon A\ni 0}z^{|A|},& &g^{\sss (t)}_z={\bar G}^{\sss (t)}_z(0)=\sum_{T\colon T\ni 0}z^{|T|},\\
	\lbeq{defLTLATwoPoint}
	\bar G^{\sss (a)}_z(x)=\sum_{n=0}^\infty t^{\sss (a)}_n(x) z^n=\sum_{A\colon A\ni 0,x}z^{|A|},&
	&\bar G^{\sss (t)}_z(x)=\sum_{n=0}^\infty t^{\sss (t)}_n(x) z^n=\sum_{T\colon T\ni 0,x}z^{|T|},
	\end{align}
where we sum over lattice animals $A$ and trees $T$, respectively. For technical reasons, we perform the analysis for the normalised two-point function $G_z(x)=\bar G_z(x)/g_z$. This is not necessary, but simplifies our analysis in the general framework and improves the numerical performance of our method.

We define the \emph{LA and LT susceptibilies} by
	\begin{align}
	\chi^{\sss (a)}(z)=\hat {\bar G}^{\sss (a)}_z(0),&
	&\chi^{\sss (t)}(z)=\hat {\bar G}^{\sss (t)}_z(0),
	\end{align}
and denote the radii of convergence of these sums by $z^{\sss (a)}_c$ and $z^{\sss (t)}_c$, respectively. As for SRW and NBW, $1/z_c$ describes the exponential growth of the number of LTs/LAs as the number of bonds $n$ grows. When we drop the superscript $(a)$ or $(t)$, we speak about LTs and LAs simultaneously. The typical length scale of a lattice tree/animal of size $n$ is characterized by the average radius of gyration $R_n$ given by
	\begin{align}
	\lbeq{genTreestatement}
	R_{n}&= \frac {1} {2\hat t_n(0)} \sum_{x\in\Zd} \|x\|^2_2 t_n(x).
	\end{align}

\paragraph{Critical exponents.}
The asymptotic behavior of $t_n$ and $G_z$ can be described using critical exponents.
We define three of these critical exponents for LA and LT.
In doing so we drop the superscripts $(a)$ and $(t)$ as the following holds for LA and LT.
It is believed that there exist $\gamma,\delta,\nu,\eta$ and $A_1,A_2,A_3,A_4>0$ such that
	\begin{align}
	\chi(z)&=(1+o(1))\frac{A_1}{(1-z/z_c)^\gamma},\qquad
	R_n=(1+o(1)) A_2 \cdot n^{\nu},
	\lbeq{genTreestatement2}
	\\
	\bar{G}_{z_c}(x)&=(1+o(1))\frac {A_3} {\|x\|_2^{d-2+\eta}},\qquad\text{ and }\qquad
	\hat{\bar{G}}_{z_c}(k)=(1+o(1)) A_4 \|k\|_2^{\eta-2},
	\lbeq{Gx-LTLA}
	\end{align}
as $z\nearrow z_c,~n\rightarrow \infty, \|x\|_2\to\infty$ and $k\to 0$, respectively. The exponents are believed to be universal, in the sense that they do not depend on the detailed lattice structure (as long as the lattice is non-degenerate and symmetric). In particular, it is believed that the values of $\gamma,\delta,\eta$ and $\nu$ are the same in the nearest-neighbor setting that we consider here, and in the spread-out setting. The constants $A_i$ do depend on the lattice structure.

For LT/LA, it is believed that the critical exponents take their mean-field values above their upper critical dimension $d_c$, which are $\gamma=1/2, \nu=1/4, \eta=0$. These values correspond to the mean-field model of LT/LA, studied in \cite{BorChaHofSla99}. It is conjectured in \cite{LubIsa79} that the upper critical dimension of LT and LA is $d_c=8$. This conjecture is supported by \cite{HarTas87}, where it is shown that if the ``square diagram'' is finite at the critical point, as is believed for $d>8$, then the critical exponent $\gamma$ satisfies $\gamma\leq1/2$. In \cite{BovFroGla86b}, it has been proven that $\gamma\geq 1/2$ in all dimensions.

For the nearest-neighbor setting that we consider, Hara and Slade give a rigorous proof of mean-field behavior for LT and LA in \emph{sufficiently high dimensions}, see \cite{HarSla90b}. What sufficiently high dimensions means was not made precise. The authors learned through private communication with Takashi Hara that it was not investigated in which dimension the classical lace expansion starts to works. In particular, Hara and Slade expected it to only be successful in dimensions much larger than $d_c=8$. In the \emph{spread-out setting} with $L$ large enough, Hara and Slade in \cite{HarSla90b} proved  the mean-field behavior for LT and LA in all dimension $d>8$.  Our main result for LTs and LAs is formulated in the following theorem:

\begin{theorem}[Infrared bound for LTs and LAs]
\label{thm-LT-LA}
The infrared bound $\hat{\bar{G}}_{z_c}(k)\leq A(d)/[1-\hat{D}(k)]$ holds for some $A(d)$ for nearest-neighbor lattice trees in dimension $d$ satisfying $d\geq \dmintree$,
and for nearest-neighbor lattice animals in dimension $d$ satisfying $d\geq \dminanimal$. As a result, $\gamma$ takes its mean-field value $\gamma=1/2$. The critical exponent $\eta$ exists in the bounded-ratio sense and takes its  mean-field value $\eta=0$.
\end{theorem}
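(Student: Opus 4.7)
The plan is to combine the model-independent analysis of the present paper with the model-specific NoBLE expansion derived in the companion paper \cite{FitHof13g}. First I would carry out the NoBLE for the normalized two-point function $G_z(x)=\bar G_z(x)/g_z$ (and its one-edge-avoiding analogue $G_z^\iota$) to produce coefficients $\aaz$, $\Xi_z$, $\Xi_z^\iota$, $\Psi_z^\iota$ and $\Pi_z^{\iota,\kappa}$ for which \refeq{basicgeneral1}--\refeq{basicgeneral2} hold. For LT/LA this is done by conditioning on the backbone between $0$ and $x$ and extracting its nearest-neighbor NBW-skeleton edge by edge, absorbing all deviations from a non-backtracking walk (immediate reversals, branchings in the LA case, overlaps between the backbone and its complement) into the NoBLE coefficients; working with the normalized $G_z$ rather than $\bar G_z$ makes the identification $\aaz\approx z g_z$ cleanly compatible with the general framework.

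Second, I would establish diagrammatic bounds on these coefficients in terms of convolutions of $\bar G_z$ of tree-graph type, ensuring that each NoBLE coefficient contains at least one closed sub-diagram so that it is small once preliminary bounds on $\bar G_z$ are available. These bounds are then translated into the decomposition \refeq{DefPhi-simple}--\refeq{DefFFunction-simple} used in the general analysis, which reduces verifying the assumptions of Section \ref{subsecAss} to controlling SRW integrals. Feeding in the numerical computation of those SRW integrals from Section \ref{secNumerics} and applying the bootstrap argument of Section \ref{secVeriBootstrap} propagates the small-coefficient bounds up to $z=z_c$ and yields a constant $A'(d)$ with $\hat G_{z_c}(k)\le A'(d)/[1-\hat D(k)]$. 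Multiplying by $g_{z_c}$, which is finite once the diagrammatic estimates are in place, produces the infrared bound for $\hat{\bar G}_{z_c}(k)$ with constant $A(d)=g_{z_c}A'(d)$.

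Finally, the infrared bound implies that $\hat{\bar G}_{z_c}\in L^4((-\pi,\pi)^d)$ as soon as $d>8$, so by Parseval the square diagram $\sum_x \bar G_{z_c}^{\star 4}(x)\big|_{x=0}=\int|\hat{\bar G}_{z_c}(k)|^4\,d^dk/(2\pi)^d$ is finite in the range $d\ge\dmintree$ for LT and $d\ge\dminanimal$ for LA. The square condition of \cite{HarTas87} then gives $\gamma\le 1/2$, which together with the universal lower bound $\gamma\ge 1/2$ from \cite{BovFroGla86b} yields $\gamma=1/2$. The matching bounded-ratio upper and lower bounds $\hat{\bar G}_{z_c}(k)\asymp\|k\|_2^{-2}$ follow from the infrared bound and from the representation \refeq{generalForm-simple} (which provides the lower bound once the NoBLE coefficients are controlled), giving $\eta=0$ in the bounded-ratio sense. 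The main obstacle is the quantitative step: the diagrammatic bounds must be sharp enough, and the bootstrap hypothesis wide enough, to close the argument at the comparatively small dimensions $d=\dmintree$ and $d=\dminanimal$, which is exactly the reason one needs the non-backtracking refinement of the lace expansion rather than the classical one used in \cite{HarSla90b}.
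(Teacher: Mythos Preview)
Your proposal is correct and follows essentially the same approach as the paper: the NoBLE and diagrammatic bounds for LT/LA are carried out in the companion paper \cite{FitHof13g}, the model-independent bootstrap of this paper (Theorem~\ref{centralresult} via Proposition~\ref{succesfulbootsprop}) then yields the infrared bound, and the mean-field exponents $\gamma=1/2$ and $\eta=0$ follow from the square-diagram criterion of \cite{HarTas87} combined with the lower bound of \cite{BovFroGla86b} and from the two-sided control of $\hat G_{z_c}(k)$ given by \refeq{generalForm-simple}. The finiteness of $g_{z_c}$ that you need to pass from $G_z$ back to $\bar G_z$ is indeed supplied by the bootstrap bound on $f_1$, since $\bar\mu_z=zg_z$.
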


From our analysis we show directly that $\eta=0$ in Fourier space, see the right side of \refeq{Gx-LTLA}, and can easily deduce that $\gamma=1/2$. The proof that 
that $\eta=0$ in $x$-space is non-trivial and is explained in \cite{FitHof13g} using the results by Hara \cite{Hara08}.

For LTs and LAs, in \cite{FitHof13g}, we perform the non-bracktracking lace expansion (NoBLE) on the two-point function $\bar G_z(x)$ and $\bar G_z^\iota(x),$ which is the two-point function in which the LT and LA partially avoid $\ve[\iota]$. In the expansion, we identify $\aaz=zg^\iota_z$ and $\aabz=zg_z$, where $g_z=\bar G_z(0)$ and $g^\iota_z=\bar G^\iota_z(0)$ are the one-point functions.

The expansion proves two relations for $\bar G_z(x)$ and $\bar G_z^\iota(x)$ that are perturbations of \refeq{NBWScheme-fou}.
The NoBLE can further be used to obtain bounds on the NoBLE coefficients and to verifies that the assumptions formulated in this paper indeed hold in the dimensions in Theorem \ref{thm-LT-LA}.

\def\CycleBootstrapInitial[#1]{
\begin{tikzpicture}[line width=1pt,auto,scale=#1]
 \node   at ({2*cos(20)},{2*sin(20)})      {Diagrams};
 \node   at ({2*cos(140)},{2*sin(140)})     {$G_{z_I}$};
 \node   at ({2*cos(260)},{2*sin(260)})      {$\Xi_{z_I}$};
 \draw [->,very thick] ({2*cos(245)},{2*sin(245)})  arc  (245:150:2);
 \draw [->,very thick] ({2*cos(10)},{2*sin(10)})  arc  (10:-90:2);
 \draw [->,very thick] (4,1.2) to ({2.3*cos(20)+0.8},{2.3*sin(20)+0.2}) ;
 \color{white}
 \node[left]   at(-4,1.2)      {Assume a bound};
  \color{black}
 \node[right]   at(4,1.2)      {Comparison to NBW};
\end{tikzpicture}}

\def\CycleBootstrap[#1]{
\begin{tikzpicture}[line width=1pt,auto,scale=#1]

\node   at ({3*cos(90)-0.1},{3*sin(90)})     {$f_i(z)\leq \Gamma_i$};
 \node   at ({3*cos(150)},{3*sin(150)})     {$f_i(z)\leq \gamma_i<\Gamma_i$};
 \node   at ({3*cos(0)},{3*sin(0)})      {Diagrams};
 \node   at ({3*cos(230)},{3*sin(230)})      {Perturbation};
 \draw [->,very thick] ({3*cos(215)-0.2},{3*sin(215)})  arc  (215:165:3);
 \draw [->,very thick] ({3*cos(70)},{3*sin(70)})  arc  (70:10:3);
 \draw [dotted,very thick] ({3*cos(140)},{3*sin(140)})  arc  (140:115:3);

\draw [->,very thick] ({3*cos(350)},{3*sin(350)})  arc  (350:240:3);
 \draw [->,very thick] (-4,3) to ({3*cos(90)-1.4},{3*sin(90)}) ;
 \color{white}
 \node[right]   at(4,1.2)      {Comparison to NBW};

  \color{black}
 \node[left]   at(-4,3)      {Assume a bound};
 \node[left]   at(-3.1,0)   {Conclude a bound};
\end{tikzpicture}}

\section{Main results}
\label{subsecIntroAna}
The main result of this paper is the infrared bound in a general setting. In this section, we first explain the idea of the proof.  Then, we state the assumptions on the general model and prove the infrared bound under these assumptions. We close this section with a discussion of our results.

\subsection{Overview}
\label{subsecStructureAna}
The NoBLE writes $\hat G_z(k)$ as a perturbation of the NBW two-point function, see \refeq{generalForm}, where the perturbation is described by certain NoBLE-coefficients that we denote by $\Xi_z,\Xi_z^\iota, \Psi^\kappa_z$ and $\Pi^{\iota,\kappa}_z$.  In the accompanying papers, we derive the NoBLE and its coefficients and prove that they can be bounded by a combination of simple diagrams. When we can bound these simple diagrams, then we are able to bound the perturbation, and thereby also to derive asymptotics for the two-point function.

Thus, we would like to bound simple diagrams for all $z\leq z_c$. It turn out there exists a $z_I\in(0,z_c)$ such that we can bound the simple diagrams in terms of NBW  diagrams for all $z\leq z_I$. For example, for percolation, $z_I=1/(2d-1)$. To obtain bounds also for $z\in(z_I,z_c)$, we use a {\em bootstrap argument}, which is a common tool in lace-expansion proofs, see e.g.\ \cite{BrySpe85,HarSla90a,HarSla90b}. We next explain how such a bootstrap argument works.

We use the following minor modification of the classical bootstrap argument:
\begin{lemma}[Bootstrap argument]
\label{bootstraplemma}
For $i=1,2,3$, let $z\mapsto f_i(z)$ be continuous functions on the interval $[z_I,z_c)$. Further, let $\gamma_i,\Gamma_i\in\Rbold$ be such that $1\leq\gamma_i<\Gamma_i$ and $f_i(z_I)\leq \gamma_i$. If for $z\in(z_I,z_c)$ the condition $f_i(z)\leq \Gamma_i$ for all $i\in\{1,2,3\}$ implies that $f_i(z)\leq \gamma_i$ for all $i\in\{1,2,3\}$, then in fact $f_i(z)\leq \gamma_i$ for all $z\in[z_I,z_c)$ and $i\in\{1,2,3\}$.
\end{lemma}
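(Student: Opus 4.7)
The plan is to use the standard clopen/continuity argument that underlies every lace-expansion bootstrap. I would introduce the set
\begin{equation*}
S \;=\; \{\, z \in [z_I, z_c) : f_i(z) \leq \gamma_i \text{ for all } i \in \{1,2,3\} \,\},
\end{equation*}
and show that $S$ is nonempty, closed in $[z_I,z_c)$, and open in $[z_I, z_c)$. Since $[z_I, z_c)$ is a connected topological space, these three facts force $S = [z_I, z_c)$, which is exactly the desired conclusion.

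Nonemptiness is immediate: the hypothesis $f_i(z_I) \leq \gamma_i$ gives $z_I \in S$. Closedness is immediate from the continuity of the $f_i$: if $z_n \in S$ and $z_n \to z^* \in [z_I,z_c)$, then $f_i(z^*) = \lim_{n\to\infty} f_i(z_n) \leq \gamma_i$, so $z^* \in S$.

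The main (and only really interesting) step will be openness, and this is where the strict inequality $\gamma_i < \Gamma_i$ is used in an essential way. Given $z_0 \in S$, one has $f_i(z_0) \leq \gamma_i < \Gamma_i$ for each $i$. By continuity of the three $f_i$ there is an $\varepsilon>0$ such that $f_i(z) \leq \Gamma_i$ for all $i$ and all $z$ in the neighborhood $U := (z_0-\varepsilon,z_0+\varepsilon) \cap [z_I, z_c)$. The bootstrap hypothesis then upgrades this pointwise on $U \cap (z_I, z_c)$ to $f_i(z) \leq \gamma_i$, giving $U \subseteq S$. The boundary case $z_0 = z_I$ is handled identically, with $U = [z_I, z_I + \varepsilon)$; note that the hypothesis only needs to be applied at points $z > z_I$, while $z_I$ itself is already in $S$ by assumption, so the fact that the hypothesis is stated on the open interval $(z_I, z_c)$ causes no trouble.

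Having established the clopen-and-nonempty trio, connectedness of $[z_I,z_c)$ concludes the proof. The only conceptual point worth flagging is precisely the role of the gap $\gamma_i < \Gamma_i$: without it, continuity would not upgrade the bound $f_i(z_0) \leq \gamma_i$ to a neighborhood on which $f_i \leq \Gamma_i$ still holds, and the openness of $S$ would fail. This is the standard ``forbidden gap'' mechanism that gives bootstrap arguments their power.
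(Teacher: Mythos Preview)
Your clopen argument is correct and is the standard topological unpacking of the bootstrap mechanism. The paper's own proof is a one-liner: it introduces the single continuous function $g(z)=\max_{i}f_i(z)/\Gamma_i$ and invokes the intermediate value theorem, noting that $g(z_I)<1$ and that the hypothesis forbids $g$ from taking any value in $(\max_i\gamma_i/\Gamma_i,\,1]$, so $g$ can never cross $1$. This is exactly the same ``forbidden gap'' idea you highlight, just compressed: your openness step is precisely what prevents $g$ from entering the gap, and your connectedness conclusion is the IVT in disguise. Neither approach gains anything the other lacks; the paper's version is terser, yours makes the mechanism more explicit.
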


\begin{proof}
We consider the continuous function $z\mapsto\max_{i=1,2,3} f_i(z)/\Gamma_i$ and see that the lemma follows directly from the intermediate value theorem for continuous functions.
\end{proof}
\noindent

To apply the bootstrap argument, we use three different functions that we need in order to bound the lace-expansion coefficients:
\begin{enumerate}[(a)]
\item $f_1$ to bound \emph{the critical values} of $\aabz$, $\aaz$ and $z\in(z_I,z_c)$;
\item $f_2$ to bound the \emph{two-point function} $G_z(x)$ in Fourier space;
\item $f_3$ to bound so-called \emph{weighted diagrams}, such as weighted bubbles or triangles.
\end{enumerate}

We define and explain these functions at the end of this section. We first continue our explanation at a more heuristic level.

For $z=z_I$, we bound the simple diagrams using the NBW diagrams and use these bounds to prove that $f_i(z_I)\leq \gamma_i$. This `initializes' the bootstrap argument. For $z\in(z_I,z_c)$, we use the idea depicted in Figure \ref{fig-Heuristic-bootstrap} . Firstly, we assume that $f_i(z)\leq \Gamma_i$ and use this assumption to conclude bounds on various diagrams consisting of combinations of two-point functions. Then, we use these bounds to bound the lace-expansion coefficients. This, in turn, enables us to conclude bounds on the bootstrap functions $f_i(z)$. {\em If} these bounds turn out to be such that $f_i(z)\leq \gamma_i<\Gamma_i$, {\em then} we can use Lemma \ref{bootstraplemma} to conclude that $f_i(z)\leq \gamma_i$ for {\em all} $z<z_c$. These bounds then imply the infrared bound. We extend the result to $z_c$ using a left-continuity property of the two-point function and the NoBLE coefficients, which we prove in the accompanying papers as these arguments are model-dependent.

 \begin{figure}[h]
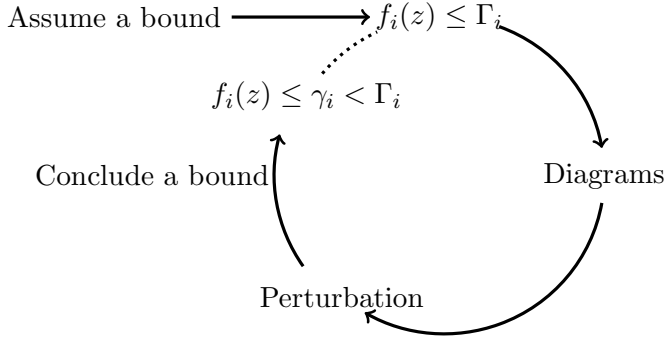

 \begin{center}
\CycleBootstrap[0.7]
\end{center}
 \caption{\label{fig-Heuristic-bootstrap} Structure of the bootstrap argument: Improvement of bounds}
\end{figure}

The structure of the proof, shown in Figure \ref{fig-Heuristic-bootstrap}, is the reason that we are not able to prove the infrared bound for all dimensions above the upper critical dimensions $d_c$. The perturbation, identified by the NoBLE, is for dimensions close to $d_c$ quite large and reduces as we increase the dimension. Thus, for high dimensions such as $d\geq 100$, it is relatively simple to show that the statement that $f_i(z)\leq \Gamma_i$ implies that $f_i(z)\leq \gamma_i<\Gamma_i$. However, it is very difficult to prove such a statement for dimension closer to the upper critical dimension $d_c$. The dimensions stated in Theorem \ref{thm-perc} and \ref{thm-LT-LA} do not show anything specific about the model, but only the limitation of our technique. In fact, for example for percolation, $d>d_c=6$ is the {\em proper} condition.

Using exhaustive bounds on the model-dependent NoBLE coefficients and a more tedious computer-assisted proof might allow to prove the infrared bound in dimensions above, yet closer to, $d_c$. Thereby, it is not clear whether it can be used to obtain the infrared bound for \emph{all} dimension above $d_c$ for percolation, LT and LA.

We next explain the idea of our proof in more detail, so as to further highlight the ideas in this paper. We continue by defining and discussing the bootstrap functions.

\paragraph{Bootstrap functions.}
For the bootstrap, we use the following functions:
	\begin{eqnarray}
	\lbeq{defFunc1}
	f_1(z)&:=&\max\left\{(2d-1)\aabz,c_\mu (2d-1)\aaz\right\},\\
	\lbeq{defFunc2}
	f_2(z)&:=& \sup_{k\in(-\pi,\pi)^d}
	\frac{|\genG[k]|}{\hat{B}_{\mu_c}(k)} = \frac{2d-1}{2d-2}\sup_{k\in(-\pi,\pi)^d} [1-\hat D(k)]\ |\genG[k]|,\\
	\lbeq{defFunc3}
	f_3(z)&:=& \max_{\{n,l,S\}\in \mathcal{S}} \frac{\sup_{x\in S}
	\sum_{y}\|y\|_2^2G_z(y)(G_z^{\star n}\star D^{\star l})(x-y)}{c_{n,l,S}},
	\end{eqnarray}
where $c_\mu>1$ and $c_{n,l,S}>0$ are some well-chosen constants and $\mathcal{S}$ is some finite set of indices. Let us now start to discuss the choice of these functions.

The functions $f_1$ and $f_3$ can been seen as the combinations of multiple functions.  We group these functions together as they play a similar role and are analyzed in the same way.
We do not expect that the values of the bounds on the individual functions constituting $f_1$ and $f_3$ are comparable. This is the reason that we introduce the constants $c_\mu$ and $c_{n,l,S}$.

The value of $n$ is model-dependent.  For SAW, we would use only $n=0$. For percolation we use $n=0,1$ and $n=0,1,2$ for LT and LA. This can intuitively be understood as follows. By the $x$-space asymptotics in \refeq{Gx-perc} and \refeq{Gx-LTLA}, and the fact that $(f\star f)(x)\sim \|x\|_2^{4-d}$ when $d>4$ and $f(x)\sim \|x\|_2^{2-d}$, we have that $\|y\|_2^2G_z(y)\sim (G_z\star G_z)(y)$. As a result, this suggests that
	\eqan{
	\sum_{y}\|y\|_2^2G_z(y)(G_z^{\star n}\star D^{\star l})(x-y)
	&\sim \sum_{y}(G_z\star G_z)(y)(G_z^{\star n}\star D^{\star l})(x-y)\\
	&=\big(G^{\star (n+2)}_z\star D^{\star l}\big)(x),\nn
	}
so that finiteness of $\sum_{y}\|y\|_2^2G_z(y)(G_z^{\star n}\star D^{\star l})(x-y)$ is related to
finiteness of the bubble when $n=0$, of the triangle when $n=1$ and of the square when $n=2$.

The choices of point-sets $S\in \mathcal{S}$ improve the numerical accuracy of the method. For example, we obtain much better estimates in the case when $x=0$, since this leads to closed diagrams, than for $x\neq 0$. For $x$ being a neighbor of the origin, we can use symmetry to improve our bounds significantly.
To obtain the infrared bound for percolation in $d\geq \dmin$ we use
	\begin{align*}
  	\mathcal{S}=\big\{ \{0,0,\mathcal{X}\},\{1,0,\mathcal{X}\},\{1,1,\mathcal{X}\},\{1,2,\mathcal{X}\},
	\{1,3,\mathcal{X}\},\{1,4,\{0\}\} \big\},
	\end{align*}
with $\mathcal{X}=\{x\in\Zd\colon \|x\|_2>1\}$. This turns out to be sufficient for our main results.

\subsection{Assumptions}
\label{subsecAss}
In this section, we state the assumptions that we need to perform the general NoBLE analysis.
The assumptions are in terms of the simplified form of the NoBLE in \refeq{generalForm-simple}. In Section \ref{secRewrite}, we derive this simplified NoBLE form of the NoBLE and translate the following assumption
in terms of the NoBLE-coefficients. We begin with two assumptions on the two-point function that are completely independent of the expansion:

\begin{ass} [Bound for the initial value]
\label{assXBoundInitialCondition}
There exists a $z_I\in[0,z_c)$ such that
	\begin{align}
	\lbeq{assInitialXBound}
	G_{z}(x)\leq B_{1/(2d-1)}(x)=\frac {2d-2}{2d-1} C_{1/2d}(x)
	\end{align}
for all $x\in \Zd$ and $z\in[0,z_I]$.
\end{ass}

\noindent
To control the growth of the two-point function as we approach the critical value $z_c$, we use the following two assumptions:

\begin{ass} [Growth of the two-point function]
\label{assGzBehavoir}
For every $x\in \Zd$, the two-point functions $z\mapsto G_z(x)$ and $z\mapsto G^\iota_z(x)$ are non-decreasing and  differentiable in $z\in(0,z_c)$. For all $\varepsilon>0$ there exists a constant $c_{\varepsilon}\geq 0$ such that for all $z\in(0,z_c-\varepsilon)$ and $x\in\Zd\setminus\{0\}$,
	\begin{eqnarray}
	\lbeq{assGzDiffBound}
	\frac d {dz} G_z(x)\leq c_{\varepsilon} (G_z\star D\star G_z)(x)
	\quad \text{ and therefore }\quad \frac d {dz} \hat G_z(0)\leq c_{\varepsilon} \hat G_z(0)^2.
	\end{eqnarray}
For all $z\in(0,z_c)$, there exists a constant $K(z)<\infty$ such that $\sum_{x\in\Zd} \|x\|_2^2 G_{z}(x)<K(z)$.
\end{ass}

\begin{ass} [Continuity]
\label{assContinuous}
For $z\in[0,z_c)$, $z\mapsto \aabz$ and $z\mapsto \aaz$ are continuous.
\end{ass}

We only consider models where the two-point function has the following set of symmetries:

\begin{definition}[Total rotational symmetry]
\label{defSignedPermuations}
We denote by $\mathcal{P}_d$ the set of all permutations of $\{1,2,\dots, d\}$. For $\nu\in \mathcal{P}_d$, $\delta\in\{-1,1\}^d$ and $x\in\Zd$, we define $p(x;\nu,\delta)\in\Zd$ to be the vector with entries $(p(x;\nu,\delta))_j=\delta_j x_{\nu_j}$. We say that a function $f:\Zd\mapsto \Rbold$ is {\em totally rotationally symmetric} when $f(x)=f(p(x;\nu,\delta))$ for all $\nu\in \mathcal{P}_d$ and $\delta\in\{-1,1\}^d$.
\end{definition}

\begin{ass}[Symmetry]
\label{assSym}
We assume that $x\mapsto G_z(x),x\mapsto \Rfz(x)$ and $x\mapsto \Rpz(x)$ are totally rotationally symmetric. Further, we assume that the lace-expansion coefficients satisfy
	\begin{eqnarray}
  	\lbeq{assSym-interchange}
	\hat \Psi^{\iota}_z(0)&=&\hat \Psi^{\kappa}_z(0), \qquad \qquad\
	\sum_{\iota'}\hat \Pi^{\iota',\kappa}_z(0)=\sum_{\kappa'}\hat \Pi^{\iota,\kappa'}_z(0)
	\end{eqnarray}
for all $\iota,\kappa\in\{\pm 1,\pm 2,\dots,\pm d\}$ and $z\leq z_c$.
\end{ass}

The following is the central assumption to perform the bootstrap.  We assume that \emph{if} $f_1(z),$ $f_2(z),$ $f_3(z)$ are bounded for a given $z\in[0,z_c)$, \emph{then} the functions $\afz,\apz,\Rfz,\Rpz$ obey certain diagrammatic bounds.  The form of these bounds is delicate and depends sensitively on the precise model under consideration.  

\begin{ass}[Diagrammatic bounds]
\label{assDiagBounds}
Let $\Gamma_1,\Gamma_2,\Gamma_3\geq 0$.
Assume that $z\in (z_I,z_c)$ is such that $f_i(z)\leq \Gamma_i$ holds for all $i\in\{1,2,3\}$.
Then, $\hat G_z(k)\geq 0$ for all $k\in(-\pi,\pi)^d$, and the following bounds hold with $\beta_{\bullet}$ depending only on $\Gamma_1,\Gamma_2,\Gamma_3,d$ and the model:\\
(a) There exist $\betaaa>1, \lowaf,\upaf,\betaap,\lowcp,\upcp>0 $, such that
	\begin{align}
	\lbeq{analys-assumed-rho-Bound}
	\frac {\bar \mu_z}{\mu_z}\leq \betaaa,\qquad \qquad \lowcp \leq \cpz \leq \upcp,\\
	\lowaf \leq \afz \leq \upaf, \qquad\qquad \quad |\apz| \leq \betaap.
	\end{align}
(b) There exist $\overline{\beta}_{\sss \Pi^{\iota}}, \underline {\beta}_{\sss \Psi^{\kappa}}>0$, such that
	\begin{align}
	\lbeq{analys-assumed-Bound-PiPsi}
	\sum_{x,\kappa}\Pi^{\iota,\kappa}_z(x)\leq \overline{\beta}_{\sss \Pi^{\iota}},\qquad\qquad
	\sum_{x}\Psi^{\kappa}_z(x)\geq  -\underline {\beta}_{\sss \Psi^{\kappa}}.
	\end{align}
(c) There exist $\beta_{{\sss R,F}},\beta_{{\sss R,\Phi}},\betadeltaRpz,\betadeltaRfz,\betadeltaRfzlow>0$ such that
	\begin{align}
	\lbeq{analys-assumed-Remainder}
	\sum_{x} |\Rfz(x)|\leq& \beta_{{\sss R,F}}, \qquad\quad\quad  \sum_{x}|\Rpz(x)|\leq \beta_{{\sss R,\Phi}},\\
	\lbeq{analys-assumed-displacement-1}
	\sum_{x}\|x\|_2^2 |\Rpz(x)|\leq&\betadeltaRpz,\qquad\qquad 
	\sum_{x}\|x\|_2^2 |\Rfz(x)|\leq \betadeltaRfz,\\
	\lbeq{analys-assumed-displacement-3}
	\hRfz(0)-\hRfz(k)\geq& -\betadeltaRfzlow[1-\hat D(k)],
	\end{align}
for all $k\in(-\pi,\pi)^d$. Further, we assume that $\lowaf-\betadeltaRfzlow>0$ and $\lowcp-\betaap-\beta_{{\sss R,\Phi}}>0$.
If Assumption \ref{assXBoundInitialCondition} holds, then the bounds stated above also hold for $z=z_I$, where in this case the constants $\beta_{\bullet}$ only depend on the dimension $d$ and the model.
\end{ass}
To extend the infrared bound to $z=z_c$, we use the following assumption:

\begin{ass} [Growth at the critical point]
\label{assGzBehavoirCritical}
We assume that, if the bounds stated in Assumption \ref{assDiagBounds} hold uniformly for $z\in[z_I,z_c)$, then $z\mapsto \hat G_z(k)$ is left-continuous at $z=z_c$ for any $k\neq 0$, and that the bounds stated in Assumption \ref{assDiagBounds} also hold for $z=z_c$.
\end{ass}

Assumptions \ref{assSym}-\ref{assGzBehavoirCritical} depend on the NoBLE  and are stated in terms of its simplified form \refeq{generalForm-simple}. In Section \ref{secRewrite}, we replace these assumptions by assumptions on the NoBLE-coefficients. We have chosen to use the form \refeq{generalForm-simple} for the analysis, as it simplifies the presentation of the analysis considerably.

\subsection{Main result: Infrared bound}
To successfully apply the bootstrap argument, we require that we can improve the bound on the bootstrap
functions. This is the content of the following condition:

\begin{definition}[Sufficient condition for the improvement of bounds]
\label{finalCondition}
For $\gamma,\Gamma\in\Rbold^3$ and $z\in[z_I,z_c)$, we say that $P(\gamma,\Gamma,z)$ holds when $f_i(z)\leq \Gamma_i$ for $i\in\{1,2,3\}$ and the following conditions hold:
	\begin{eqnarray}
	\lbeq{conditiongamma}
	0&\leq& \gamma_i < \Gamma_i\qquad\qquad \text{ for }i=1,2,3,\\
	\lbeq{conditionf1}
	\gamma_1 &\geq&\max\Big\{f_1(z_I), \max\{\betaaa,c_\mu\}
	\frac {1+\overline{\beta}_{\sss \Pi^{\iota}}}
	{ 1 - \frac {2d}{2d-1}\underline {\beta}_{\sss \Psi^{\kappa}}}\Big\},\\
	\lbeq{conditionf2}
	\gamma_2  &\geq&  \frac{2d-1}{2d-2}
	\frac {\upcp+\betaap+\betaRp}{\lowaf-\betadeltaRfzlow},
	\end{eqnarray}
and $\gamma_3$ is larger than the maximum of the right-hand sides of \refeq{conditionf3-two-initial-statement} and \refeq{bound-on-f3}.
\end{definition}

The last condition states that the initial condition $f_3(z_I)\leq\gamma_3$ holds and that the improvement of bounds succeeds for $f_3$. We do not give a formal statement at this point, as it is involved and would require notation that has not yet been introduced. If the bootstrap succeeds, then we are able to prove our main result:

\begin{theorem}[Infrared bound]
\label{centralresult}
Let $k\in[-\pi,\pi]^d,z_I\in[0,z_c)$ and $\gamma,\Gamma\in\Rbold^3$. If Assumptions \ref{assXBoundInitialCondition}-\ref{assGzBehavoirCritical} and $P(\gamma,\Gamma,z)$ hold for all $z\in[z_I,z_c)$, then, for all $z\in [z_I, z_c]$,
	\begin{eqnarray}
	\lbeq{infrared1}
	\genG[k] [1-\hat D(k)]&\leq& \frac {2d-2}{2d-1}\gamma_2,\\
	\lbeq{infrared2}
	\genG[k] &\leq& \frac {A(d)}{1/\hat G_z(0) +[1-\hat D(k)]},
	\end{eqnarray}
with
	\begin{eqnarray}
	\lbeq{infrared2-ad}
	A(d)=\frac {\upcp+\betaap+ \beta_{{\sss R,\Phi}}} {
	\min\big\{\lowcp-\betaap- \beta_{{\sss R,\Phi}},\lowaf-\betadeltaRfzlow \big\}}.
	\end{eqnarray}
\end{theorem}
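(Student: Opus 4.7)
The plan is to combine the bootstrap lemma (Lemma \ref{bootstraplemma}) applied to the three functions $f_1,f_2,f_3$ from \refeq{defFunc1}--\refeq{defFunc3} with the simplified NoBLE representation \refeq{generalForm-simple} and the diagrammatic bounds of Assumption \ref{assDiagBounds}. Once the bootstrap delivers $f_i(z)\leq\gamma_i$ uniformly on $[z_I,z_c)$, the first infrared estimate \refeq{infrared1} follows immediately from the definition of $f_2$ together with the non-negativity of $\genG[k]$ granted by Assumption \ref{assDiagBounds}, while \refeq{infrared2} is obtained by bounding numerator and denominator of \refeq{generalForm-simple} via the $\cpz,\apz,\afz$ estimates and the displacement hypothesis on $\hRfz$.

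To invoke Lemma \ref{bootstraplemma}, I would first establish its three hypotheses on $[z_I,z_c)$. Continuity of each $f_i$ in $z$ follows from Assumptions \ref{assGzBehavoir} and \ref{assContinuous}: the differentiability of $z\mapsto G_z(x)$ with the convolution-type derivative bound, together with the finite second-moment condition, allows dominated convergence to commute the suprema and $\|y\|_2^2$-weighted sums in $f_2,f_3$ with the limit in $z$. The initial inequality $f_i(z_I)\leq\gamma_i$ is obtained by invoking Assumption \ref{assXBoundInitialCondition} to dominate $G_{z_I}$ pointwise by $B_{1/(2d-1)}$ and translating the resulting SRW integrals via \refeq{SRWtoNBWlink}; together with the slack encoded in \refeq{conditionf1}--\refeq{conditionf2}, these deliver the initial bounds. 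The improvement step, namely that $f_i(z)\leq\Gamma_i$ for all $i$ forces $f_i(z)\leq\gamma_i$, is exactly the content of $P(\gamma,\Gamma,z)$ in Definition \ref{finalCondition}: \refeq{conditionf1} handles $f_1$ via the representation of $1-\hat F_z(0)$ combined with Assumption \ref{assDiagBounds}(b), \refeq{conditionf2} handles $f_2$ by the same algebraic manipulation that gives \refeq{infrared2} below, and the conditions referenced in \refeq{conditionf3-two-initial-statement}--\refeq{bound-on-f3} take care of $f_3$. Lemma \ref{bootstraplemma} then yields $f_i(z)\leq\gamma_i$ for all $z\in[z_I,z_c)$, and Assumption \ref{assGzBehavoirCritical} extends this up to $z=z_c$ by left-continuity.

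For the extraction step, \refeq{infrared1} is immediate from \refeq{defFunc2} and $\genG[k]\geq 0$. For \refeq{infrared2} I would upper-bound the numerator of \refeq{generalForm-simple} by $\upcp+\betaap+\beta_{{\sss R,\Phi}}$ using $|\hat D(k)|\leq 1$, $|\apz|\leq\betaap$, and $\sum_x|\Rpz(x)|\leq\beta_{{\sss R,\Phi}}$. For the denominator, the bound $\hRfz(0)-\hRfz(k)\geq-\betadeltaRfzlow[1-\hat D(k)]$ combined with $\afz\geq\lowaf$ gives a lower bound of $\hat\Phi_z(0)/\genG[0]+(\lowaf-\betadeltaRfzlow)[1-\hat D(k)]$; since $\hat\Phi_z(0)\geq\lowcp-\betaap-\beta_{{\sss R,\Phi}}$, factoring the minimum of the two coefficients produces the lower bound $\min\{\lowcp-\betaap-\beta_{{\sss R,\Phi}},\lowaf-\betadeltaRfzlow\}\cdot[1/\genG[0]+(1-\hat D(k))]$, reproducing exactly the constant in \refeq{infrared2-ad}.

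The genuine difficulty lies not in this extraction but in verifying that the improvement step for $f_3$ can be closed, since the weighted-diagram bound couples a $\|y\|_2^2$ factor (corresponding in Fourier space to differentiating through $1-\hat D(k)$) to convolutions of $G_z$ whose finiteness is itself the target of the bootstrap. This cyclic dependence is why the normalisation constants $c_{n,l,S}$ are introduced in \refeq{defFunc3} and why the relevant inequality is only encoded implicitly via \refeq{conditionf3-two-initial-statement} and \refeq{bound-on-f3}; unravelling it, together with the structural continuity checks needed to apply the bootstrap rigorously, should be the main content of Section \ref{secVeriBootstrap}.
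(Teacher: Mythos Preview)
Your proposal is correct and follows essentially the same route as the paper: package continuity, the initial estimates, and the improvement step into the bootstrap hypotheses (the paper isolates this as Proposition~\ref{succesfulbootsprop}), apply Lemma~\ref{bootstraplemma} to get $f_i(z)\leq\gamma_i$ on $[z_I,z_c)$, extend to $z_c$ via Assumption~\ref{assGzBehavoirCritical}, read off \refeq{infrared1} from the definition of $f_2$, and obtain \refeq{infrared2} by bounding numerator and denominator of \refeq{generalForm-simple} exactly as you describe. Your identification of the $f_3$ improvement as the genuine technical core, deferred to Section~\ref{secVeriBootstrap}, is also how the paper organises the argument.
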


We postpone the discussion of Theorem \ref{centralresult} to Section \ref{sec-disc}. We continue to discuss the strategy of proof.

\subsection{Proof subject to a successful bootstrap}
The central statement, that we prove in Section \ref{secVeriBootstrap}, is that we can apply the bootstrap argument when $P(\gamma,\Gamma,z)$ holds. We now formalize this statement in Proposition \ref{succesfulbootsprop}:

\begin{prop}[A successful bootstrap]
\label{succesfulbootsprop}
Let $\gamma,\Gamma\in\Rbold^3$. If Assumptions \ref{assXBoundInitialCondition}-\ref{assGzBehavoirCritical} and $P(\gamma,\Gamma,z)$ hold for all $z\in[z_I,z_c)$, then the functions $f_1,f_2,f_3$ defined in \refeq{defFunc1}-\refeq{defFunc3} are
continuous, $f_i(z_I)<\gamma_i$ for $i\in\{1,2,3\}$ hold, and $f_i(z)\leq \Gamma_i$ for all $i\in\{1,2,3\}$ implies that $f_i(z)\leq \gamma_i$ for all $i\in\{1,2,3\}$.
\end{prop}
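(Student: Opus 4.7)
The plan is to verify in turn the three claims of the proposition (continuity of $f_1,f_2,f_3$ on $[z_I,z_c)$, the strict initial bound $f_i(z_I)<\gamma_i$, and the bootstrap improvement $f_i(z)\le\Gamma_i$ for all $i$ implies $f_i(z)\le\gamma_i$ for all $i$), at which point Lemma~\ref{bootstraplemma} becomes applicable. Continuity of $f_1$ is immediate from Assumption~\ref{assContinuous}. For $f_2$ and $f_3$ I would argue by dominated convergence on any compact subinterval $[a,b]\subset[z_I,z_c)$: the bound $\tfrac{d}{dz}G_z(x)\le c_\varepsilon (G_z\star D\star G_z)(x)$ of Assumption~\ref{assGzBehavoir} and the monotonicity of $z\mapsto G_z(x)$ furnish integrable $x$-majorants that justify swapping the $z$-limit with the sum and with the Fourier $\sup$ in \refeq{defFunc2}, and the second-moment bound $\sum_x\|x\|_2^2 G_z(x)<K(z)$ handles the extra weight in \refeq{defFunc3}.

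For the initial-value claim, Assumption~\ref{assXBoundInitialCondition} gives $G_{z_I}(x)\le B_{1/(2d-1)}(x)$ pointwise; taking the Fourier transform and applying \refeq{SRWtoNBWlink} yields $[1-\hat D(k)]\hat G_{z_I}(k)\le \tfrac{2d-2}{2d-1}$, so $f_2(z_I)\le 1<\gamma_2$ by \refeq{conditiongamma}. The inequality $f_1(z_I)\le\gamma_1$ is part of \refeq{conditionf1}; strictness comes from the fact that the second entry of the $\max$ there is strictly positive and independent of $f_1(z_I)$. For $f_3(z_I)$, the SRW domination from Assumption~\ref{assXBoundInitialCondition} reduces each weighted diagram indexed by $(n,l,S)\in\mathcal S$ to a finite NBW/SRW integral that is compared to $c_{n,l,S}\gamma_3$ using the numerical bounds of Section~\ref{secNumerics}.

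The substance is the improvement step. Assuming $f_i(z)\le\Gamma_i$ for all $i$ activates every diagrammatic bound in Assumption~\ref{assDiagBounds}, which I would insert into \refeq{generalForm-simple}. In the numerator, $|\hat\Phi_z(k)|\le \upcp+\betaap+\betaRp$ since $|\hat D(k)|\le 1$, and in the denominator \refeq{analys-assumed-displacement-3} gives $\afz[1-\hat D(k)]+\hRfz(0)-\hRfz(k)\ge (\lowaf-\betadeltaRfzlow)[1-\hat D(k)]$. Multiplying by $[1-\hat D(k)]$ and taking $\sup_k$ in \refeq{defFunc2} yields
\[
f_2(z)\ \le\ \tfrac{2d-1}{2d-2}\cdot\tfrac{\upcp+\betaap+\betaRp}{\lowaf-\betadeltaRfzlow}\ \le\ \gamma_2
\]
by \refeq{conditionf2}. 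For $f_1$, the input is that $\hat G_z(0)<\infty$ for $z<z_c$ forces $\hat F_z(0)<1$; expanding $\hat F_z(0)=\aaz(\v1+\vPsiz[0])^T[\mI+\aaz\mJ+\mPi[0]]^{-1}\v1$ via \refeq{DefFFunction} and using the symmetry relations \refeq{assSym-interchange} reduces the matrix expression to a scalar identity, from which the lower bounds in Assumption~\ref{assDiagBounds}(b) yield $(2d-1)\aaz\bigl(1-\tfrac{2d}{2d-1}\underline\beta_{\sss\Psi^\kappa}\bigr)\le 1+\overline\beta_{\sss\Pi^\iota}$; combined with $\aabz\le\betaaa\aaz$ from \refeq{analys-assumed-rho-Bound} this matches precisely the second entry of the $\max$ in \refeq{conditionf1}, so $f_1(z)\le\gamma_1$.

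The hard part will be the improvement of $f_3$. The strategy is to rewrite each weighted convolution in \refeq{defFunc3} via Parseval and the identity $\|y\|_2^2\,\mathrm e^{\ii k\cdot y}=-\Delta_k \mathrm e^{\ii k\cdot y}$ as a Fourier integral involving $\hat G_z(k)^{n+1}\hat D(k)^l$ and its $k$-derivatives, and then bound each such integral by an explicit SRW-integral using the already-improved $f_2(z)\le\gamma_2$ (which controls $\hat G_z(k)[1-\hat D(k)]$) and the $\mu_z$-control from $f_1(z)\le\gamma_1$. The point-set decomposition over $\mathcal S$ is essential: the closed case $x=0$ admits a clean Parseval identity, the case of $x$ a nearest-neighbor of the origin can be halved by the symmetry from Assumption~\ref{assSym}, and the remaining components $x\in\mathcal X$ are bounded by explicit convolution estimates, all of which must come out strictly below $c_{n,l,S}\gamma_3$. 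The detailed bookkeeping, and the verification that the numerical constants supplied in Section~\ref{secNumerics} make the resulting inequalities strict, occupies Section~\ref{secVeriBootstrap}; this is where essentially all the technical difficulty of the bootstrap sits. Once the strict improvement holds for each $i\in\{1,2,3\}$, the proposition is established.
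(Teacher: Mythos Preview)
Your overall strategy matches the paper's: continuity via Assumptions~\ref{assContinuous} and~\ref{assGzBehavoir}, improvement of $f_1$ via the symmetry reduction of \refeq{DefFFunction} at $k=0$ (the paper packages this as Lemma~\ref{lemmaChoiceMu}), improvement of $f_2$ via the numerator/denominator bounds from Assumption~\ref{assDiagBounds}, and the Laplacian/Fourier rewrite of the weighted diagrams for $f_3$. The $f_3(z_I)$ argument via the $x$-space domination of Assumption~\ref{assXBoundInitialCondition} is also exactly what the paper does in Section~\ref{secAnalysisXspaceF3Initial}.

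There is, however, a genuine gap in your treatment of $f_2(z_I)$. You write that the pointwise bound $G_{z_I}(x)\le B_{1/(2d-1)}(x)$, after ``taking the Fourier transform'', yields $[1-\hat D(k)]\hat G_{z_I}(k)\le \tfrac{2d-2}{2d-1}$. This inference is invalid: a pointwise inequality between nonnegative summable functions in $x$-space does \emph{not} imply a pointwise inequality between their Fourier transforms for $k\neq 0$. (A simple counterexample: $g=\delta_0\le h=\delta_0+\varepsilon(\delta_{e_1}+\delta_{-e_1})$, yet $\hat h(k)=1+2\varepsilon\cos k_1<1=\hat g(k)$ for suitable $k$.) The reason the $x$-space domination \emph{does} work for $f_3(z_I)$ is that $\Hcal^{n,l}_z(x)$ is itself an $x$-space sum of nonnegative terms; $f_2$ has no such structure. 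The paper's fix is simply not to treat $f_2(z_I)$ separately: Lemma~\ref{improvementBoundsf2} is stated for all $z\in[z_I,z_c)$, and the last sentence of Assumption~\ref{assDiagBounds} guarantees that the diagrammatic bounds needed for that lemma hold at $z=z_I$ directly (without invoking $f_i(z_I)\le\Gamma_i$). So $f_2(z_I)\le\gamma_2$ falls out of the same \refeq{f2-improve-lastline} computation you already sketched for the improvement step, together with \refeq{conditionf2}. Also note that your appeal to \refeq{conditiongamma} for the strict inequality $1<\gamma_2$ is not supported: that condition only says $0\le\gamma_i<\Gamma_i$.
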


We prove Proposition \ref{succesfulbootsprop} in Section \ref{secVeriBootstrap} one function $f_i$ at a time.
Now we prove our main result, Theorem \ref{centralresult}, assuming that Proposition \ref{succesfulbootsprop} holds:

\begin{proof}[Proof of Theorem \ref{centralresult} subject to Proposition \ref{succesfulbootsprop}] By Lemma \ref{bootstraplemma},
	\begin{eqnarray}
	\lbeq{infrared1-ministep}
	\frac {2d-1}{2d-2}\hat G_z(k)[1-\hat D(k)]\leq f_2(z)\leq \gamma_2 \qquad \text{ for all }z\in(z_I,z_c).
	\end{eqnarray}
By Assumption \ref{assGzBehavoirCritical}, $z\mapsto \hat G_{z}(k)$ is left-continuous at $z=z_c$ for $k\neq 0$. From this, we conclude that \refeq{infrared1-ministep} also holds for $z=z_c$ and $k\neq 0$, which proves  \refeq{infrared1}.
To prove \refeq{infrared2}, we use the bounds of Assumption \ref{assDiagBounds} on $\hat G_z(k)$ as given in \refeq{generalForm-simple}
\begin{eqnarray}
\hat G_z(k) &=&\frac{\cpz+\apz \hat D(k) +\hRpz(k)}{ \frac {\cpz+\apz \hat D(0) +\hRpz(0)}{\hat G_z(0)}+
	\afz[1-\hat D(k)]+\hRfz(0)-\hRfz(k)}\nnb
&\leq &\frac {\upcp+\betaap+ \beta_{{\sss R,\Phi}}}
{
\frac {\lowcp-\betaap- \beta_{{\sss R,\Phi}}} {\hat G_z(0)}
+
(\lowaf-\betadeltaRfzlow)[1-\hat D(k)]}\nnb
&\leq &
\frac {\upcp+\betaap+ \beta_{{\sss R,\Phi}}}
{\min\{\lowcp-\betaap- \beta_{{\sss R,\Phi}},\lowaf-\betadeltaRfzlow\} }
\ \frac 1 {1/G_z(0)+[1-\hat D(k)]},
\lbeq{mainTheoremtmp1}
\end{eqnarray}
which implies \refeq{infrared2} and derives the expression for $A(d)$ in \refeq{infrared2-ad}.
\end{proof}


\subsection{Discussion}
\label{sec-disc}
In general, a proof using the NoBLE consists of four parts, see also Figure \ref{Struct-NoBLE}: (a) the derivation of the non-backtracking lace expansion or NoBLE; (b) diagrammatic bounds on the NoBLE coefficients; (c) the analysis of the NoBLE equation; and (d) a numerical verification of the conditions in $P(\gamma,\Gamma,z)$, using a computer-assisted proof.

\begin{center}
\begin{figure}[h]
\begin{tikzpicture} [scale=0.85]

  \draw [->,line width=2] (3,0) to (6.75,0);
  \draw [->,line width=2] (3.5,-5) to (5.25,-5);
  \draw [<-,line width=2] (10,-1.25) to (10,-3.25);
  \draw [->,line width=2] (9.5,-1.25) to (9.5,-3.25);
  \draw [->,line width=2] (0,-1.25) to (0,-3.25);

  \draw [<-,line width=2] (9.75,-5.75) to (9.75,-6.75);

  \node[above]   at(4.75,0)     {General relation};
  \node[below]   at(4.35,-5)   {Bounds in};
  \node[below]   at(4.35,-5.5)   { form of diagrams};
  \node[left]   at(0,-1.5)        {Coefficients to};
  \node[left]   at(0,-2)        {describe the};
  \node[left]   at(0,-2.5)        {perturbation};
  \node[right]   at(10,-1.75)      {Bound on the};
  \node[right]   at(10,-2.25)      {perturbation};

  \node[left]   at(9.5,-1.5)      {Prior bounds};
  \node[left]   at(9.5,-2)       {on the two-point };
  \node[left]   at(9.5,-2.5)      {function};
  \node[right]   at(10,-6.2)      {Numerical values};

\node [draw] {
\begin{tabular}{c}
{\bf \large Expansion} \\[1mm]
\hline
\vspace{2mm}
{\small Model dependent} \\[-2mm] {\small in accompanying papers}
\end{tabular}
};

\node [draw]  at (10,0){
\begin{tabular}{c}
{\bf \large Analysis/ Bootstrap } \\[1mm]
\hline
\vspace{2mm}
{\small Model independent }\\[-2mm]
{\small in this paper}
\end{tabular}
};

\node [draw]  at (10,-4.5){
\begin{tabular}{c}
{\bf \large Numerical bounds } \\[1mm]
\hline
\vspace{2mm}
{\small Model dependent} \\[-2mm]
{\small in accompanying Mathematica notebooks}
\end{tabular}};

\node [draw]  at (10,-8){
\begin{tabular}{c}
{\bf \large Numerical computation} \\[1mm]
\hline
{\small Model dependent }\\
{\small in accompanying Mathematica notebooks
}
\end{tabular}};

\node [draw]  at (0,-4.5){
\begin{tabular}{c}
{\bf \large Diagrammatic bounds } \\[1mm]
\hline
\vspace{2mm}
{\small Model dependent }\\[-2mm]
{\small in accompanying papers}
\end{tabular}};
\end{tikzpicture}
\label{Struct-NoBLE}
\caption{Structure of the non-backtracking lace expansion.}
\end{figure}
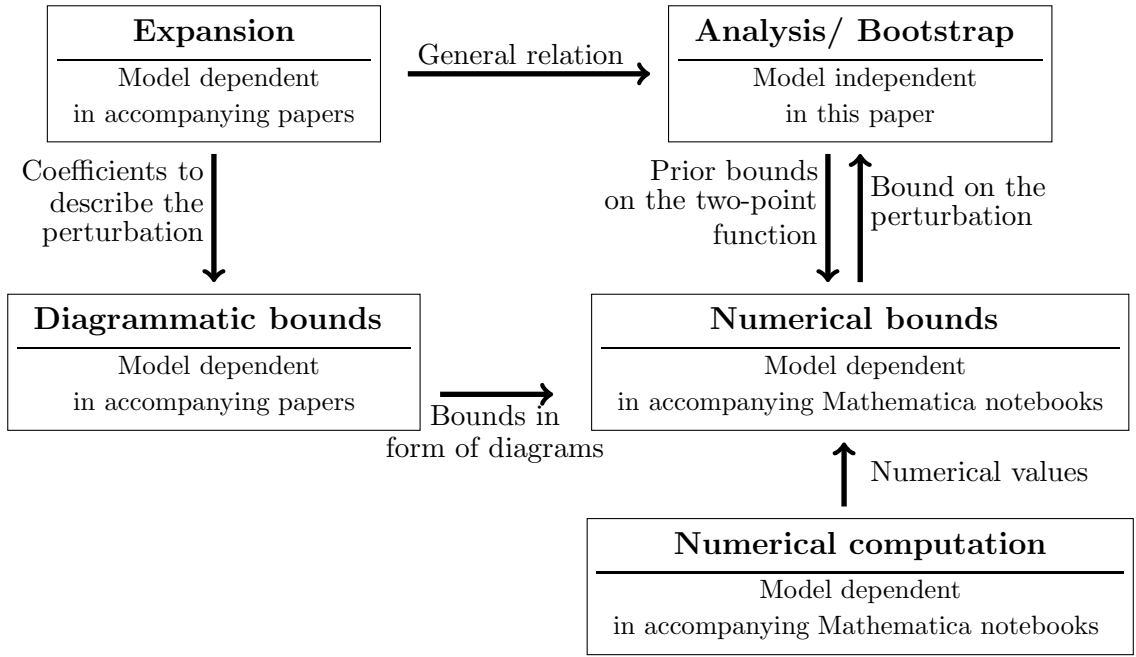
\end{center}

Parts (a) and (b) are performed in the model-dependent papers \cite{FitHof13d, FitHof13g}.
Part (c) is performed here in a generalized setting. Part (d) is explained in Section \ref{secNumerics}, and is numerically performed in three Mathematica  notebooks. In the first notebook, we compute SRW-integrals for a given dimension, see Sections \ref{secNumericsSRW}-\ref{secNumericsAdvanced}. In the second notebook, we implement bounds on the simplified rewrite \refeq{generalForm-simple} and the bound necessary for the improvement of $f_3$, see Appendix \ref{secRewriteBounds} and Section \ref{subsecImproF3bar}, respectively. These two parts are completely model independent. In the third notebook, we use the values of the SRW-integrals and the bootstrap assumptions to compute numerical bounds on the diagrammatic bounds on the NoBLE coefficients. These bounds are then used to verify the conditions $P(\gamma,\Gamma,z)$, which, when successful, imply that the analysis here yields the infrared bounds in the specific dimension under consideration. Since the bounds are {\em monotone} in the dimension, the bounds then also follow for all dimensions larger than that specific dimension.

In the thesis of the first author \cite{Fit13}, the analysis was performed in two ways.
The first was based on the $x$-space approach, as originally worked out by Hara and Slade in \cite{HarSla92b}. This approach was used by Hara and Slade in \cite{HarSla92b, HarSla92a} to obtain that mean-field behavior holds for self-avoiding walk (SAW) in all dimensions $d\geq 5$. This is optimal in the sense that mean-field behavior for SAW in $d\leq 4$ should not be true. See \cite{BrySla14a} and references therein for results in this direction. Further, Hara and Slade adapted their method to percolation \cite{HarSla90a}, which led to the famous result that mean-field behavior for percolation holds for $d\geq 19$.

The second analysis was based on the trigonometric approach, first used for finite tori in \cite{BorChaHofSlaSpe05b} and worked out for $\Zd$ in \cite{HeyHofSak08, Slad06}. However, it was never verified above which dimension this technique can be applied. Thus, it was initially not obvious to us which method would be numerically optimal. It was only by implementing both methods that we discovered that the $x$-space approach, combined with the NoBLE analysis, is numerically superior. This is the reason that we only describe this method here. In conclusion, our method is, after the derivation of the NoBLE, heavily inspired by that of Hara and Slade in \cite{HarSla92b}. We have benefitted tremendously from their work, as well as from the many discussion that we have had with Takashi Hara and Gordon Slade over the past years. From private communication, we have learned that Takashi Hara also managed to prove that percolation in dimension $d\geq 15$ obeys the infrared bound, although this result has not appeared in print. We hope that our method, as well as the accompanying Mathematica notebooks that are publicly available \cite{FitNoblePage} to anyone who is interested, increase the transparency of the proof of the infrared bound for all the models involved.

The main difference between our work and the work by Hara and Slade in \cite{HarSla92b} is that in our method, the loops creating the perturbative terms are made to consist of at least 4 bonds, while in the classical lace expansion they could consist of immediate reversals (2 bonds). This makes the perturbation considerably smaller, and allows for an analysis that is to a much larger extent model-independent as the analysis in \cite{HarSla92b} and its adaptation to percolation. It also explains why our method gives reasonable results for lattice trees and lattice animals, models that previously had not been attempted by Hara and Slade. In discussions with Takashi Hara, we have found that our bounds on e.g.\ the triangle diagram are slightly better in dimension $15$, whereas he has a much more sophisticated and model-dependent analysis of the lace-expansion coefficients.

For the SAW, we also derived a NoBLE and implemented the bootstrap. In this way, we can show that mean-field behavior holds for SAW in $d\geq 7$, see \cite{Fit13} and \cite{FitNoblePage}. While the proof for $d\geq 7$ is relatively simple, we expect that an extension of the technique to $d=5,6$ will not produce a substantially simpler proof than that of Hara and Slade, that is already optimal in the sense that it proves the result in all dimension above the SAW-upper critical dimension $4$. Thus, we have not attempted to improve upon our result.

Let us dwell a bit on the distinction between the $x$-space approach and the $k$-space or trigonometric approach. We require bounds on {\it weighted diagrams}, alike
\begin{align}
\hRfz(0)-\hRfz(k)=\sum_x \Rfz(x)[1-\cos(k\cdot x)] \leq [1-\hat D(k)]  \beta.
\end{align}
We can either bound the underlying diagram directly in Fourier space or use the following lemma to translate it into the $x$-space approach:
\begin{lemma}[Fourier transforms and step distributions]
\label{lemmaFdifferenceToX}
For a summable, non-negative function $g$ that is totally rotationally symmetric, as defined in Definition \ref{defSignedPermuations}, the following bound holds:
	\begin{eqnarray}
	\lbeq{DifferenceTosecondMoment}
	\sum_{x}g(x)[1-\cos(k\cdot x)]&\leq& [1-\hat D(k)] \sum_{x}g(x) \|x\|_2^2.
	\end{eqnarray}
\end{lemma}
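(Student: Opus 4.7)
The plan is to exploit the total rotational symmetry of $g$ to symmetrize $\cos(k\cdot x)$ over the hyperoctahedral group $G \cong \mathcal{P}_d \ltimes \{-1,1\}^d$, thereby reducing \refeq{DifferenceTosecondMoment} to a pointwise inequality handled by a telescoping identity. Since $g$ is $G$-invariant, a relabeling $x\mapsto \sigma^{-1}x$ in the sum shows
$$\sum_x g(x)[1-\cos(k\cdot x)] \;=\; \sum_x g(x)\bigl[1-A(k,x)\bigr], \qquad A(k,x) := \frac{1}{|G|}\sum_{\sigma\in G}\cos(k\cdot\sigma x),$$
and since $g\geq 0$, it suffices to prove the pointwise bound $1 - A(k,x) \leq [1-\hat D(k)]\|x\|_2^2$ for every $x=(n_1,\ldots,n_d)\in\Zd$.

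First I would compute $A(k,x)$ in closed form. Coordinatewise sign-flip averaging, using $\tfrac12(e^{i\theta}+e^{-i\theta})=\cos\theta$, collapses the average over $\{-1,1\}^d$ to a product: for each permutation $\nu\in\mathcal{P}_d$,
$$\frac{1}{2^d}\sum_{\delta\in\{-1,1\}^d}\cos\Bigl(\sum_{\iota} k_\iota\delta_\iota n_{\nu_\iota}\Bigr) \;=\; \prod_{\iota=1}^d \cos(k_\iota n_{\nu_\iota}).$$
Averaging also over $\nu$ then yields $A(k,x) = \tfrac{1}{d!}\sum_\nu \prod_\iota\cos(k_\iota n_{\nu_\iota})$.

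The heart of the argument is the telescoping identity $1 - \prod_\iota c_\iota = \sum_\iota \bigl(\prod_{\iota'<\iota}c_{\iota'}\bigr)(1-c_\iota)$ applied with $c_\iota = \cos(k_\iota n_{\nu_\iota}) \in [-1,1]$. Since each prefix product has modulus at most one and $1-c_\iota \geq 0$, this gives $1-\prod_\iota\cos(k_\iota n_{\nu_\iota}) \leq \sum_\iota[1-\cos(k_\iota n_{\nu_\iota})]$. Averaging over $\nu$ and using that each value $n_j$ appears at the position $\nu_\iota$ equally often yields $1-A(k,x) \leq \tfrac{1}{d}\sum_{\iota,j}[1-\cos(k_\iota n_j)]$. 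Finally, the elementary Chebyshev-type inequality $1-\cos(nt) \leq n^2[1-\cos t]$ for $n\in\Zbold$ and $t\in\Rbold$ (which follows from $|\sin(nt/2)| \leq n|\sin(t/2)|$ by induction via the angle-sum formula) collapses the right-hand side to $\tfrac{1}{d}\bigl(\sum_\iota(1-\cos k_\iota)\bigr)\bigl(\sum_j n_j^2\bigr) = [1-\hat D(k)]\|x\|_2^2$, establishing the pointwise bound.

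The main obstacle I anticipate is recognizing that the symmetrization is essential: the naive pointwise inequality $1-\cos(k\cdot x) \leq [1-\hat D(k)]\|x\|_2^2$ is false in general (take for example $x=(1,1,1)$, $k=(\pi/3,\pi/3,\pi/3)$ in dimension $d=3$), while alternatives such as $1-\cos(k\cdot x)\leq (k\cdot x)^2/2$ combined with Cauchy--Schwarz lose a factor of $d$. The combination of the $G$-averaging with the $|\prod c_{\iota'}|\leq 1$ step inside the telescope is exactly what keeps the dimensional factors balanced and produces the $1/d$ implicit in $\hat D(k)$.
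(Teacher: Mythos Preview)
Your proof is correct and rests on the same three ingredients as the paper's: a telescoping identity, the bound $|\cos|\le 1$ on the prefactors, and the Chebyshev-type estimate $1-\cos(nt)\le n^2[1-\cos t]$. The packaging differs slightly: you front-load the full hyperoctahedral average to reduce the statement to the pointwise inequality $1-A(k,x)\le[1-\hat D(k)]\|x\|_2^2$, whereas the paper keeps the sum $\sum_x g(x)$ throughout and invokes symmetry in two smaller doses --- first a single sign-flip in $x_\mu$ to kill the sine cross-terms arising from the telescope of $1-\e^{\ii k\cdot x}$, and then the coordinate-permutation symmetry of $g$ at the very end to replace $\sum_x g(x)x_\mu^2$ by $\tfrac1d\sum_x g(x)\|x\|_2^2$. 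Your route yields a clean intermediate pointwise statement (and your remark that the unsymmetrized pointwise bound fails is a nice sanity check), while the paper's route is marginally shorter; neither buys anything the other cannot.
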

To distribute the {\it weight} $\|x\|_2^2$ over a large diagram into the weights of parts of the diagram, we use the relation that, for $x_i\in\Zd$:
	\begin{align}
	    \lbeq{Split-weight-ineq}
	   \big\|\sum_{i=1}^J x_i\big\|_2^2=
	&\sum_{i=1}^J \|x_i\|_2^2+ \sum_{i=2}^J x_i^T \Big(\sum_{j=1}^{i-1} x_j\Big),
	\qquad
	  &
	  \big\|\sum_{i=1}^J x_i\big\|_2^2\leq& J\sum_{i=1}^J\|x_i\|_2^2.
	\end{align}
The diagrammatic bounds for the $k$-space approach are very similar, due the following analogous result, which is of independent interest:
\begin{lemma}[Split of cosines]
\label{lemmacosdecomp}
Let $t\in\Rbold$ and $t_i\in\Rbold$ for $i=1,\dots,J$ such that\\
$t=\sum_{i=1}^Jt_i$. Then,
	\begin{align}
	\lbeq{lemmacosdecompFirstStep}
	1-\cos(t)&\leq  \sum_{i=1}^J [1-\cos(t_i)]+ \sum_{i=2}^J \sin(t_i)\sin\Big(\sum_{j=1}^{i-1} t_j\Big),\\
	\lbeq{lemmacosdecomp-line}
	1-\cos(t)&\leq J\sum_{i=1}^J[1-\cos(t_i)].
	\end{align}
\end{lemma}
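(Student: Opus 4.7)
I would prove the two bounds separately by elementary trigonometric identities. For \refeq{lemmacosdecompFirstStep}, the key is the algebraic identity
\[
1-\cos(a+b) = [1-\cos(a)] + [1-\cos(b)] + \sin(a)\sin(b) - [1-\cos(a)][1-\cos(b)],
\]
which one verifies by expanding $\cos(a+b)=\cos(a)\cos(b)-\sin(a)\sin(b)$ and writing $\cos(a)\cos(b) = (1-[1-\cos(a)])(1-[1-\cos(b)])$. Since $[1-\cos(a)][1-\cos(b)]\geq 0$, this yields the two-term bound
\[
1-\cos(a+b) \leq [1-\cos(a)] + [1-\cos(b)] + \sin(a)\sin(b).
\]
Applying this inequality for $i=2,3,\dots,J$ with $a=\sum_{j=1}^{i-1} t_j$ and $b=t_i$ telescopes into \refeq{lemmacosdecompFirstStep}; a trivial induction on $J$ makes this precise.

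For \refeq{lemmacosdecomp-line}, I would use the half-angle identity $1-\cos(t)=2\sin^2(t/2)$. The angle-addition formula combined with $|\cos|\leq 1$ gives the subadditivity $|\sin(a+b)|\leq |\sin(a)|+|\sin(b)|$, and a one-line induction extends this to $|\sin(t/2)|\leq \sum_{i=1}^J |\sin(t_i/2)|$. Squaring both sides and applying the power-mean inequality $(\sum_{i=1}^J a_i)^2 \leq J \sum_{i=1}^J a_i^2$ yields
\[
\sin^2(t/2) \leq \Big(\sum_{i=1}^J |\sin(t_i/2)|\Big)^2 \leq J \sum_{i=1}^J \sin^2(t_i/2),
\]
and multiplying by $2$ gives \refeq{lemmacosdecomp-line}.

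Neither step presents a serious obstacle; the only subtle point is sign-tracking in the algebraic identity behind the first inequality, but the cancellation $(1-\cos a)(1-\cos b)\geq 0$ makes everything work out. Alternatively, one could deduce \refeq{lemmacosdecomp-line} directly from \refeq{lemmacosdecompFirstStep} by bounding each cross term via $|\sin(t_i)\sin(\sum_{j<i} t_j)|\leq \tfrac{1}{2}[\sin^2(t_i)+\sin^2(\sum_{j<i} t_j)]$ and using $\sin^2 x\leq 2(1-\cos x)$, followed by an inductive bound on $1-\cos(\sum_{j<i}t_j)$, but the half-angle route is cleaner and avoids the need for \refeq{lemmacosdecompFirstStep}.
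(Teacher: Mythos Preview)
Your proof is correct. For \refeq{lemmacosdecompFirstStep}, your two-term identity plus induction is essentially the same as the paper's argument: the paper writes the complex telescoping identity $1-\e^{\ii t}=\sum_{i=1}^J(1-\e^{\ii t_i})\prod_{j<i}\e^{\ii t_j}$ and takes the real part, which gives the \emph{equality} $1-\cos(t)=\sum_i(1-\cos t_i)\cos(S_{i-1})+\sum_{i\ge2}\sin(t_i)\sin(S_{i-1})$ and then uses $\cos\leq 1$; your two-term identity is precisely this for $J=2$, and your induction reproduces the telescoping.

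For \refeq{lemmacosdecomp-line} the approaches genuinely differ. The paper deduces it from \refeq{lemmacosdecompFirstStep} by your ``alternative'' route: expand $|\sin(S_{i-1})|\le\sum_{j<i}|\sin t_j|$, apply $|ab|\le\tfrac12(a^2+b^2)$, count that each $\sin^2 t_k$ appears $J-1$ times, and use $\sin^2\le 2(1-\cos)$. Your half-angle argument is shorter and self-contained (it does not need \refeq{lemmacosdecompFirstStep}), at the cost of being slightly less transparent about why the constant is $J$ rather than $2J+1$. The paper's route, on the other hand, makes the counting explicit and ties the two inequalities together.
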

The inequality \refeq{lemmacosdecomp-line} with a factor $2J+1$ is commonly used in the lace-expansion literature. While reviewing the proof, the authors found that a minor adaptation improves the leading factor to be $J$. The proof of Lemmas \ref{lemmaFdifferenceToX} and \ref{lemmacosdecomp} can be found in Appendix \ref{sec-proof-lemmas}.\\

Let us close this section by proposing some extensions of our work. We do not manage to prove the infrared bound all the way down to the upper critical dimension for percolation. For this, we need even better arguments. One might hope that this can be done by a more careful analysis that compares the interacting models with memory-$m$ walk for large values of $m$. Here, a SRW is called a memory-$m$ walk when it has no loops of length at most $m$. Thus, NBW is memory-2. We can easily derive such a memory-$m$ expansion for SAW, for percolation, LT and LA this is already more involved. However, the analysis required for this expansion is much more involved, and we have not tried this more general approach. One particular problem is that we do not know what the memory-$m$ Green's function is, so that it is harder to explicitly expand around this. We think that also for this approach a numerical (and thus computer-assisted) proof is necessary.
\section{Verification of the bootstrap conditions}

\label{secVeriBootstrap}
In this section, we prove Proposition \ref{succesfulbootsprop} one function $f_i$ at a time.

\subsection{Conditions for $f_1$}
\label{secf1}
In this section, we prove that the properties of $f_1$ in Proposition \ref{succesfulbootsprop} hold.

By Assumption \ref{assContinuous}, $z\mapsto \aabz$ and $z\mapsto \aaz$ are continuous, so that $z\mapsto f_1(z)$ is also continuous. From \refeq{conditionf1}, we conclude that $f_1(z_I)\leq \gamma_1$.
To show that for all $z\in(z_I,z_c)$, $f_i(z)\leq\Gamma_i$ for all $i\in\{1,2,3\}$ implies that $f_1(z)\leq\gamma_1$, we prove a relation between $\hat B_{\mu}(0)$ and $\hat G_z(0)$. We use the abbreviations $\psi_z= \hat \Psi^{\iota}(0)$ and
$\pi^\iota_z=\sum_\kappa \hat \Pi^{\iota,\kappa}(0)$, where the choice of $\iota\in\{\pm1,\dots, \pm d\}$ is by Assumption \ref{assSym} not relevant.

\begin{lemma}[Link between NBW and general susceptibility]
\label{lemmaChoiceMu}
Let Assumption \ref{assSym} hold and define
	\begin{eqnarray}
	\lbeq{choiceMu}
	\lambda_z&=& \frac {(1+\psi_z)  \aaz } {1+\pi^\iota_z-\aaz\psi_z},\quad\qquad \text{ so that }\quad
	\aaz=\frac {1+\pi_z^\iota}{(1+\psi_z)/\lambda_z+\psi_z}.
	\end{eqnarray}
Then, $\hat B_{\lambda_z}(0)\hat \Phi_z(0)=\genG[0]$ for all $z<z_c$.
\end{lemma}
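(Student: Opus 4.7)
The plan is to directly compute both sides and verify equality using the explicit forms derived earlier. Starting from \refeq{generalForm} evaluated at $k=0$, we have $\hat G_z(0) = \hat\Phi_z(0)/(1-\hat F_z(0))$, so the lemma reduces to showing that $\hat B_{\lambda_z}(0) = 1/(1-\hat F_z(0))$.

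First I would simplify $\hat F_z(0)$ from \refeq{DefFFunction}. Since $\mathbf{D}(0) = \mathbf{I}$, the matrix that must be inverted is $\mathbf{M} := \mathbf{I} + \mu_z \mathbf{J} + \hat{\boldsymbol{\Pi}}_z(0)$. The key observation is that the symmetry Assumption \ref{assSym} forces $\v1$ to be an eigenvector of $\mathbf{M}$: indeed, $\mathbf{J}\v1 = \v1$, and the condition $\sum_{\kappa'}\hat\Pi^{\iota,\kappa'}_z(0) = \pi^\iota_z$ (independent of $\iota$ by \refeq{assSym-interchange}) gives $\hat{\boldsymbol{\Pi}}_z(0)\v1 = \pi^\iota_z \v1$, so $\mathbf{M}\v1 = (1 + \mu_z + \pi^\iota_z)\v1$. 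Likewise $\vec{\hat\Psi}_z(0) = \psi_z \v1$, and since $\v1^T\v1 = 2d$, we obtain the scalar identity
\[
\hat F_z(0) = \frac{2d\,\mu_z(1+\psi_z)}{1+\mu_z+\pi^\iota_z}.
\]

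Next I would evaluate $\hat B_{\lambda_z}(0)$ via \refeq{NBWGenSolved}. Using $\hat D(0)=1$ and the factorization $1+(2d-1)z^2 - 2dz = (1-z)(1-(2d-1)z)$, this simplifies to
\[
\hat B_{\lambda_z}(0) = \frac{1+\lambda_z}{1-(2d-1)\lambda_z}.
\]
Substituting the defining expression \refeq{choiceMu} for $\lambda_z$, the common denominator $1+\pi^\iota_z-\mu_z\psi_z$ cancels, yielding
\[
1+\lambda_z = \frac{1+\mu_z+\pi^\iota_z}{1+\pi^\iota_z-\mu_z\psi_z},\qquad 1-(2d-1)\lambda_z = \frac{1+\pi^\iota_z-(2d-1)\mu_z - 2d\mu_z\psi_z}{1+\pi^\iota_z-\mu_z\psi_z}.
\]

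Finally I would simply verify algebraically that $1 - \hat F_z(0) = (1-(2d-1)\lambda_z)/(1+\lambda_z)$. Clearing denominators on the left gives $(1+\mu_z+\pi^\iota_z - 2d\mu_z(1+\psi_z))/(1+\mu_z+\pi^\iota_z)$, and the numerator rearranges to exactly $1+\pi^\iota_z - (2d-1)\mu_z - 2d\mu_z\psi_z$, matching the expression obtained for $\hat B_{\lambda_z}(0)^{-1}$ above. This establishes $\hat B_{\lambda_z}(0)\hat\Phi_z(0) = \hat G_z(0)$. There is no real obstacle here beyond the bookkeeping; the only subtle point is recognizing the use of \refeq{assSym-interchange} to reduce $\mathbf{M}^{-1}\v1$ to a scalar multiple of $\v1$, which replaces the matrix inversion by a single eigenvalue computation.
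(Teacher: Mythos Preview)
Your proof is correct and follows essentially the same approach as the paper: both reduce $\hat F_z(0)$ to a scalar by observing that Assumption~\ref{assSym} makes $\v1$ an eigenvector of $\mI+\mu_z\mJ+\mPi[0]$ with eigenvalue $1+\mu_z+\pi^\iota_z$, and then match this against the NBW susceptibility formula $\hat B_\lambda(0)=(1+\lambda)/(1-(2d-1)\lambda)$. The only cosmetic difference is that the paper solves the resulting identity for $\lambda_z$, whereas you substitute the given $\lambda_z$ and verify; the content is identical.
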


\begin{proof} Since $\mD[0]=\mI$ and $\vPsi[0]=\psi_z\v1$, the two-point function $\hat G_{z}(k)$ in the form of \refeq{generalForm} simplifies for $k=0$ to
	\begin{eqnarray}
	\hat G_{z}(0) &=& \frac {\hat \Phi_z(0)}{1- \aaz(\v1+\vPsi[0])^T\left[ \mD[0]+\aaz \mJ
	+ \mPi[0]\right]^{-1}\v1}\\
	&=& \frac {\hat \Phi_z(0)}{1- \aaz(1+\psi_z)\v1^T\left[ \mI+\aaz \mJ + \mPi[0]\right]^{-1}\v1}.\nn
	\end{eqnarray}
Due to the simple form of $\mI$ and $\mJ$ and the symmetry of $\Pi^{\iota,\kappa}_z$ stated in Assumption \ref{assSym}, the sum of each column and the sum of each row of $\mI+\aaz \mJ + \mPi[0]$ equals $1+\aaz+\pi^\iota$. Thus, the one vector $\v1$ is an eigenvector of $\mI+\aaz \mJ + \mPi[0]$
corresponding to the eigenvalue $1+\aaz+\pi^\iota$ and we can compute that
	\begin{eqnarray*}
	\hat G_{z}(0) &=& \frac{\hat \Phi_z(0)}{1-  \aaz(1+\psi_z)\frac{2d}{1+\aaz+\pi^\iota_z}}\qquad
	\text{and}\qquad \hat B_{\lambda}(0) = \frac {1}{1-   \frac {2d\lambda} {1 +\lambda} }.
	\end{eqnarray*}
Solving $\hat B_{\lambda_z}(0)\hat \Phi_z(0)=\hat G_{z}(0)$ for $\lambda_z$ for the first equality, and for $\aaz$ for the second, gives the desired result.
\end{proof}

The above identification allows us to improve the bound on $f_1$ as required in the bootstrap analysis:

\begin{lemma}[Improvement of $f_1$]
\label{improvementBoundsf1}
Let $z\in(z_I,z_c)$ and $\gamma,\Gamma\in\Rbold^3$. If Assumptions \ref{assSym}-\ref{assDiagBounds} and condition $P(\gamma,\Gamma,z)$ hold, then $f_1(z)\leq \gamma_1$.
\end{lemma}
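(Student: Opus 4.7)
The plan is to use the relation from Lemma \ref{lemmaChoiceMu} to translate a finiteness statement about $\hat G_z(0)$ (which holds for $z<z_c$) into an upper bound $(2d-1)\lambda_z<1$ for the NBW parameter $\lambda_z$, and then convert this back into the desired bound on $(2d-1)\mu_z$ using the explicit formula $\mu_z=(1+\pi^\iota_z)\lambda_z/\big[(1+\psi_z)+\psi_z\lambda_z\big]$ together with the diagrammatic bounds on $\psi_z=\hat\Psi^\iota_z(0)$ and $\pi^\iota_z=\sum_\kappa\hat\Pi^{\iota,\kappa}_z(0)$.

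First I would note that continuity of $z\mapsto f_1(z)$ and the initial bound $f_1(z_I)\leq \gamma_1$ are immediate from Assumption \ref{assContinuous} and the first entry in the maximum in \refeq{conditionf1}. The substantive step is the improvement of bounds. Fix $z\in(z_I,z_c)$ with $f_i(z)\leq\Gamma_i$ for $i=1,2,3$, so Assumption \ref{assDiagBounds} applies. Since $\hat G_z(0)<\infty$ for $z<z_c$ (by Assumption \ref{assGzBehavoir}) and $\hat\Phi_z(0)=\cpz+\apz+\hRpz(0)\geq \lowcp-\betaap-\betaRp>0$ by Assumption \ref{assDiagBounds}(a),(c), the identity $\hat G_z(0)=\hat B_{\lambda_z}(0)\hat\Phi_z(0)$ forces $\hat B_{\lambda_z}(0)\in(0,\infty)$. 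From $\hat B_\lambda(0)=(1+\lambda)/\bigl(1-(2d-1)\lambda\bigr)$, this yields $\lambda_z\in[0,1/(2d-1))$, i.e.\ $(2d-1)\lambda_z<1$. (One also needs $\lambda_z\geq 0$, which follows because $\mu_z\geq 0$ and the numerator/denominator in the definition of $\lambda_z$ are both positive under our diagrammatic bounds.)

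Next I would compute, from the second identity in \refeq{choiceMu},
\[
(2d-1)\mu_z \;=\; \frac{(1+\pi^\iota_z)\,(2d-1)\lambda_z}{1+\psi_z(1+\lambda_z)}
\;<\; \frac{1+\pi^\iota_z}{1+\psi_z(1+\lambda_z)}.
\]
For the numerator, Assumption \ref{assDiagBounds}(b) gives $\pi^\iota_z\leq\overline\beta_{\sss\Pi^\iota}$. For the denominator, I would split on the sign of $\psi_z$: if $\psi_z\geq 0$ the denominator is $\geq 1$; if $\psi_z<0$, then since $1+\lambda_z<2d/(2d-1)$, multiplying the negative number $\psi_z$ by the smaller positive $(1+\lambda_z)$ yields $\psi_z(1+\lambda_z)>\psi_z\cdot 2d/(2d-1)\geq -\underline\beta_{\sss\Psi^\kappa}\cdot 2d/(2d-1)$. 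In either case the denominator exceeds $1-\tfrac{2d}{2d-1}\underline\beta_{\sss\Psi^\kappa}$, which is positive by the hypothesis hidden in \refeq{conditionf1}. Hence
\[
(2d-1)\mu_z \;\leq\; \frac{1+\overline\beta_{\sss\Pi^\iota}}{1-\tfrac{2d}{2d-1}\underline\beta_{\sss\Psi^\kappa}}.
\]

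Finally I would combine with Assumption \ref{assDiagBounds}(a), which states $\bar\mu_z/\mu_z\leq\betaaa$, to conclude
\[
f_1(z)=\max\bigl\{(2d-1)\bar\mu_z,\,c_\mu(2d-1)\mu_z\bigr\} \;\leq\; \max\{\betaaa,c_\mu\}\,\frac{1+\overline\beta_{\sss\Pi^\iota}}{1-\tfrac{2d}{2d-1}\underline\beta_{\sss\Psi^\kappa}} \;\leq\; \gamma_1,
\]
the last inequality being precisely \refeq{conditionf1}. I expect the main (minor) obstacle to be the careful sign-analysis in the denominator $1+\psi_z(1+\lambda_z)$, since $\psi_z$ need not be sign-definite, and checking that $\lambda_z\geq 0$ so that the bound $(2d-1)\lambda_z<1$ can be applied without flipping any inequalities; beyond that the argument is a direct algebraic manipulation of the NBW identity of Lemma \ref{lemmaChoiceMu}.
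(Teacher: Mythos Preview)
Your proposal is correct and follows essentially the same approach as the paper: both use Lemma~\ref{lemmaChoiceMu} to obtain $(2d-1)\lambda_z<1$ from $\hat G_z(0)<\infty$, then invert the relation \refeq{choiceMu} and apply the bounds \refeq{analys-assumed-Bound-PiPsi} on $\pi^\iota_z$ and $\psi_z$ together with $\bar\mu_z/\mu_z\leq\betaaa$ to arrive at \refeq{conditionf1}. Your sign-case analysis on $\psi_z$ is more cautious than necessary---since $\psi_z\geq-\underline\beta_{\sss\Psi^\kappa}$ and $1+\lambda_z>0$, the bound $1+\psi_z(1+\lambda_z)\geq 1-\underline\beta_{\sss\Psi^\kappa}(1+\lambda_z)$ holds regardless of the sign of $\psi_z$---but this does no harm.
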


\begin{proof} Recall that $f_1(z)=\max\left\{(2d-1)\aabz,c_\mu (2d-1)\aaz\right\}$.
We select $\lambda_z$ as in Lemma $\ref{lemmaChoiceMu}$ and note that $\lambda_z<(2d-1)^{-1}$ if $z<z_c$ as $\hat G_{z}(0)=\hat B_{\lambda_z}(0)\hat \Phi_z(0)<\infty$.  Hence, we can compute that
	\eqan{
	\aaz(2d-1) &\stackrel{\text{Lemma }\ref{lemmaChoiceMu}}=
	 \frac {(1+\pi^{\iota}_z)(2d-1)\lambda_z}{1+\psi_z(1+\lambda_z)}
	\stackrel{\refeq{analys-assumed-Bound-PiPsi}} \leq
	 \frac {(1+\overline{\beta}_{\sss \Pi^{\iota}} )(2d-1)\lambda_z}{1-\underline {\beta}_{\sss \Psi^{\kappa}} (1+\lambda_z)}\nn\\
	&\stackrel{(2d-1)\lambda_z\leq 1} \leq \frac {1+\overline{\beta}_{\sss \Pi^{\iota}}} { 1 - \frac {2d}{2d-1}\underline {\beta}_{\sss \Psi^{\kappa}}}.
	}
Using $\frac {\aabz}{\aaz}\leq \betaaa$ from Assumption \ref{assDiagBounds}, we obtain
	\begin{eqnarray}
	\aabz(2d-1)  &\leq& \betaaa \frac {1+\overline{\beta}_{\sss \Pi^{\iota}}}
	{ 1 - \frac {2d}{2d-1}\underline {\beta}_{\sss \Psi^{\kappa}}}.
	\end{eqnarray}
Thus,
	\eqn{
	f_1(z)\leq \max\Big\{
	\betaaa \frac {1+\overline{\beta}_{\sss \Pi^{\iota}}}{ 1 - \frac {2d}{2d-1}\underline {\beta}_{\sss \Psi^{\kappa}}},
	c_\mu \frac {1+\overline{\beta}_{\sss \Pi^{\iota}}}  { 1 - \frac {2d}{2d-1}\underline {\beta}_{\sss \Psi^{\kappa}}}\Big\},
	}
which is by \refeq{conditionf1} smaller than $\gamma_1$ when condition $P(\gamma,\Gamma,z)$ holds.
\end{proof}

\subsection{Conditions for $f_2$}
\label{secf2}
In this section, we prove that the properties of $f_2$ in Proposition \ref{succesfulbootsprop} hold.
We start with the required continuity.

\begin{lemma}[Continuity of $f_2$]
\label{lemmaf2cont}
The function $z\mapsto f_2(z)$ defined in \refeq{defFunc2} is continuous for $z$ in $[0,z_c)$.
\end{lemma}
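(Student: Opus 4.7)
The plan is to reduce the continuity of $f_2$ to the continuity of the susceptibility $z \mapsto \hat{G}_z(0) = \chi(z)$, using monotonicity of $z \mapsto G_z(x)$ from Assumption~\ref{assGzBehavoir} to convert an $\ell^1$ difference into a susceptibility difference, which bypasses the issue of the supremum over $k$.

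First I would fix $z \in [0, z_c)$ and a sequence $z_n \to z$ with $z_n \in [0, z_c)$, and consider
\[
|f_2(z_n) - f_2(z)| \leq \tfrac{2d-1}{2d-2}\sup_{k} [1-\hat D(k)]\,\bigl||\hat G_{z_n}(k)| - |\hat G_z(k)|\bigr| \leq 2\tfrac{2d-1}{2d-2}\sup_{k}|\hat G_{z_n}(k) - \hat G_z(k)|,
\]
using the elementary inequality $||a|-|b|| \leq |a-b|$ and the uniform bound $0 \leq 1-\hat D(k) \leq 2$. The key point is that the factor $[1-\hat D(k)]$ drops out after this step, so we no longer need to worry about the sup over the non-compact domain $(-\pi,\pi)^d$.

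Next I would bound the Fourier difference pointwise in $k$ by the $\ell^1$ difference of $G_z$ and $G_{z_n}$:
\[
|\hat G_{z_n}(k) - \hat G_z(k)| \leq \sum_{x \in \Zd} |G_{z_n}(x) - G_z(x)|.
\]
By Assumption~\ref{assGzBehavoir}, $z \mapsto G_z(x)$ is non-decreasing for every $x$, so writing $\underline{z} = \min(z, z_n)$ and $\overline{z} = \max(z, z_n)$, the sum telescopes to
\[
\sum_{x} |G_{z_n}(x) - G_z(x)| \leq \sum_{x}\bigl(G_{\overline{z}}(x) - G_{\underline{z}}(x)\bigr) = \hat G_{\overline{z}}(0) - \hat G_{\underline{z}}(0).
\]

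Finally, Assumption~\ref{assGzBehavoir} ensures $z \mapsto \hat G_z(0)$ is differentiable on $(0, z_c)$, hence continuous on $[0,z_c)$, so $\hat G_{\overline{z}}(0) - \hat G_{\underline{z}}(0) \to 0$ as $z_n \to z$. Combining the inequalities yields $|f_2(z_n) - f_2(z)| \to 0$, proving continuity. The only subtlety is that the sup in the definition of $f_2$ ranges over the open cube $(-\pi,\pi)^d$, but this poses no difficulty since our bound is uniform in $k$ and independent of whether the supremum is attained. There is no real obstacle in this argument; the essential idea is that monotonicity allows us to trade a supremum for a single evaluation of the susceptibility.
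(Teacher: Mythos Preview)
Your proof is correct and takes essentially the same approach as the paper's: both control $\sup_k|\hat G_{z'}(k)-\hat G_z(k)|$ uniformly in $k$ by the susceptibility increment $\hat G_{\overline z}(0)-\hat G_{\underline z}(0)$, which tends to zero by Assumption~\ref{assGzBehavoir}. The only cosmetic difference is that the paper phrases this via the uniform bound $\bigl|\tfrac{d}{dz}\hat G_z(k)\bigr|\le \tfrac{d}{dz}\hat G_z(0)\le c_\varepsilon\hat G_{z_c-\varepsilon}(0)^2$ and speaks of a bounded derivative, whereas you bypass derivatives by invoking monotonicity of $G_z(x)$ directly.
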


\begin{proof} We follow the proof of \cite[Lemma 5.3]{HeyHofSak08}.
To show that $f_2$ is continuous on $[0,z_c)$, we prove that it is continuous on the closed interval $[0,z_c-\varepsilon]$ for any $\varepsilon>0$. Using Assumption \ref{assGzBehavoir}, we know that for any $k$ and $z\in[0,z_c-\varepsilon]$,
	\begin{eqnarray}
	\left|\frac {d}{dz}\hat G_z(k)\right|&=&\Big|\sum_{x}\e^{\ii k\cdot x}\frac {d}{dz} G_z(x)\Big|
	\leq 	\sum_{x}\frac {d}{dz} G_z(x)=\frac {d}{dz} \hat G_z(0)\nnb
	&\leq& c_\varepsilon (\hat G_z(0))^2\leq c_\varepsilon (\hat G_{z_c-\varepsilon}(0))^2,
	\end{eqnarray}
where we can interchange differentiation and summation as the sum is bounded in absolute value, as just shown. From this, we conclude that the derivative of $f_2(z)$ is uniformly bounded on $[0,z_c-\varepsilon]$, which implies the continuity of $f_2$ on $[0,z_c-\varepsilon]$.
\end{proof}

We continue to prove the bootstrap for $f_2$:
\begin{lemma}[Improvement of $f_2$]
\label{improvementBoundsf2}
Let $z\in[z_I,z_c)$ be such that Assumptions \ref{assSym}-\ref{assDiagBounds} and $P(\gamma,\Gamma,z)$ hold. Then $f_2(z)\leq \gamma_2$.
\end{lemma}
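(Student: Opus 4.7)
The plan is to plug the simplified rewrite \refeq{generalForm-simple} of $\hat G_z(k)$ directly into the definition \refeq{defFunc2} of $f_2(z)$ and use the diagrammatic bounds of Assumption~\ref{assDiagBounds} to majorize the numerator and minorize the denominator of the resulting expression. Since Assumption~\ref{assDiagBounds} already asserts $\hat G_z(k)\geq 0$, the absolute value in \refeq{defFunc2} disappears, so it is enough to control $[1-\hat D(k)]\,\hat G_z(k)$ uniformly in $k\in(-\pi,\pi)^d$.

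For the numerator $\hat \Phi_z(k)=\cpz+\apz\hat D(k)+\hRpz(k)$, I would apply the triangle inequality with $|\hat D(k)|\leq 1$, the bound $|\hRpz(k)|\leq \sum_{x}|\Rpz(x)|\leq \betaRp$ from \refeq{analys-assumed-Remainder}, together with $\cpz\leq \upcp$ and $|\apz|\leq \betaap$ from \refeq{analys-assumed-rho-Bound}, to obtain $\hat\Phi_z(k)\leq \upcp+\betaap+\betaRp$.

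For the denominator, the key observation is that \refeq{analys-assumed-displacement-3} gives $\hRfz(0)-\hRfz(k)\geq -\betadeltaRfzlow[1-\hat D(k)]$, which combines with $\afz\geq \lowaf$ to yield the lower bound $(\lowaf-\betadeltaRfzlow)[1-\hat D(k)]$ on the $k$-dependent part; this is strictly positive for $k\neq 0$ by the final positivity clause of Assumption~\ref{assDiagBounds}. The residual term $\hat\Phi_z(0)/\hat G_z(0)$ is non-negative, since $\hat G_z(0)>0$ and $\hat\Phi_z(0)=\cpz+\apz+\hRpz(0)\geq \lowcp-\betaap-\betaRp>0$ by the same assumption, so it can simply be dropped, producing a clean lower bound on the denominator that already carries the factor $[1-\hat D(k)]$.

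The factors of $[1-\hat D(k)]$ then cancel, leaving, for every $k\neq 0$,
\begin{equation*}
[1-\hat D(k)]\,\hat G_z(k)\;\leq\;\frac{\upcp+\betaap+\betaRp}{\lowaf-\betadeltaRfzlow},
\end{equation*}
while the case $k=0$ is trivial since the left-hand side vanishes. Multiplying by the prefactor $\tfrac{2d-1}{2d-2}$ and invoking \refeq{conditionf2} gives $f_2(z)\leq \gamma_2$. There is no genuine obstacle here beyond bookkeeping of signs and positivity; all the real work lies in verifying the diagrammatic bounds in Assumption~\ref{assDiagBounds} itself, which is deferred to the model-dependent accompanying papers.
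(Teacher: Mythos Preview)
Your proof is correct and follows essentially the same approach as the paper: bound the numerator $\hat\Phi_z(k)$ from above using the triangle inequality and the constants from Assumption~\ref{assDiagBounds}, bound the denominator from below using \refeq{analys-assumed-displacement-3} and $\afz\geq\lowaf$, drop the non-negative residual term $\hat\Phi_z(0)/\hat G_z(0)$, cancel the factor $[1-\hat D(k)]$, and invoke \refeq{conditionf2}.

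The only difference worth noting is how you justify the non-negativity of $\hat\Phi_z(0)/\hat G_z(0)$. You argue directly that $\hat\Phi_z(0)\geq \lowcp-\betaap-\betaRp>0$ by the final positivity clause of Assumption~\ref{assDiagBounds}(c); the paper instead invokes Lemma~\ref{lemmaChoiceMu} to write $1-\hat F_z(0)=\hat\Phi_z(0)/\hat G_z(0)=\hat B_{\lambda_z}(0)^{-1}\geq 0$, using that the NBW susceptibility is positive and finite for $z<z_c$. Your route is slightly more self-contained here, while the paper's route makes the structural link to NBW explicit. Both are valid and yield the same bound.
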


\begin{proof}
Recall that $f_2(z)=\frac{2d-1}{2d-2}\sup_{k\in(-\pi,\pi)^d} [1-\hat D(k)]\genG[k].$
As already used in the proof of Theorem \ref{centralresult}, we know that Assumption \ref{assDiagBounds} implies that
	\begin{align}
	|\hat \Phi_z(k)| &\leq \upcp+\betaap+\betaRp,
	\end{align}
and
	\begin{align}	
	1-\hat F_z(k) &= \hat \Phi_z(0)\hat G_z(0)^{-1}+
	\afz[1-\hat D(k)]+ \hRfz(0)- \hRfz(k)\nnb
	&\geq (\lowaf-\betadeltaRfzlow)[1-\hat D(k)],
	\end{align}
where we use in the last step that $1-\hat F_z(0)=(\hat \Phi_z(0)\hat G^{-1}_{z}(0))=\hat B^{-1}_{\lambda_z}(0)\geq 0$ by Lemma \ref{lemmaChoiceMu}.
We conclude from this that
	\begin{align}
	\lbeq{f2-improve-lastline}
	|\hat G_z(k)| [1-\hat D(k)]&=\frac {|\hat \Phi_z(k)| [1-\hat D(k))]} {1-\hat F_z(k)}
	\leq \frac {\upcp+\betaap+\betaRp}{\lowaf-\betadeltaRfzlow}.
	\end{align}
If condition $P(\gamma,\Gamma,z)$ holds, then this is smaller than $\gamma_2$, see \refeq{conditionf2}, which completes the proof.
\end{proof}


\subsection{Conditions for $f_3$}
\label{secf3}
In this section, we show that the function $f_{3}$, defined in \refeq{defFunc3}, satisfies the conditions of the Bootstrap Lemma (Lemma \ref{bootstraplemma}). Namely, we prove that $z\mapsto f_{3}(z)$ is continuous, that $f_3(z_I)\leq \gamma_3$, and that for all $z\in(z_I,z_c)$, $f_i(z)\leq \Gamma_i$ for all $i\in\{1,2,3\}$ implies that $f_3(z)\leq \gamma_3$.
As this is more elaborate for $f_3$ than for $f_1$ and $f_2$, we divide the proof into multiple steps.

The techniques of this section are an adaptation of those used by Hara and Slade to prove the mean-field behavior for SAW in $d\geq 5$, see \cite{HarSla92a}. The central idea needed for the adaptation was developed in discussions with Takashi Hara.

\subsubsection{Rewrite of $f_3$}
We analyze the function
	\begin{eqnarray}
	\lbeq{Xspace-Analysis-Object}
	\Hcal^{n,l}_z(x)=\sum_{y}\|y\|_2^2 G_z(y)(G_z^{\star n}\star D^{\star l})(x-y),
	\end{eqnarray}
and conclude the desired results on $f_3$ using that, by definition \refeq{defFunc3},
	\begin{eqnarray}
	f_3(z)=\max_{\{n,l,S\}\in \mathcal{S}} \frac{\sup_{x\in S} \Hcal^{n,l}_z(x)}{c_{n,l,S}}.
	\end{eqnarray}
We bound $\Hcal^{n,l}_z$ using the continuous Laplace operator $\bigtriangleup$. For a differentiable function $g$ and $s\in\{1,2,\dots,d\}$, let $\partial_s g(k)=\frac \partial {\partial k_s} g(k)$  and $\bigtriangleup g(k)=\sum_{s=1}^d\partial_s^2 g(k)$. Then,
	\begin{eqnarray}
	\lbeq{Xspace-Analysis-deltaArgument}
	\partial_s^2 \hat G_z(k)
	&=&\sum_{x\in\Zd} G_z(x) \partial_s^2 \e^{\ii k\cdot x}
	=-\sum_{x\in\Zd} x_s^2 \hat G_z(k)\e^{\ii k\cdot x},\\
	\lbeq{Xspace-Analysis-deltaArgument-2}
	\bigtriangleup \hat G_z(k)&=&-\sum_{x\in\Zd} \|x\|^2_2 G_z(x)\e^{\ii k\cdot x}.
	\end{eqnarray}
Thus, we can bound $\Hcal^{n,l}_z(x)$ using the Fourier representation
	\begin{eqnarray}
	\lbeq{Xspace-Analysis-Tool}
	\label{Xspace-Analysis-Tool-Label}
	\Hcal^{n,l}_z(x)=&\int_{(-\pi,\pi)^d}(-\bigtriangleup \hat G_z(k))
	\hat D^{l}(k)\hat  G^{n}_z(k)\e^{-\ii k\cdot x}\frac {d^dk}{(2\pi)^d}.
	\end{eqnarray}
If we replace $G_z$ in \refeq{Xspace-Analysis-Tool} by $C_{1/2d}$ (recall \refeq{genSRW}),
then we can compute the value directly, see Section \ref{secAnalysisXspaceF3Initial}.
To obtain a bound for $z\in(z_I,z_c)$, in Sections \ref{secAnalysisXspaceF3Decomposition}-\ref{subsecImproF3bar}, we extract a dominant SRW-like contribution from $G_z$, which we compute directly and then we bound the remainder terms separately. The bounds are expressed using several SRW-integrals that can also be computed numerically as we explain in Section \ref{secNumerics}.

\subsubsection{Continuity of $f_3$}

\begin{lemma}[Continuity]
\label{lemmaf3cont-version2}
The function $z\mapsto f_3(z)$ as defined in \refeq{Xspace-Analysis-Object} is continuous in $z\in[z_I,z_c)$.
\end{lemma}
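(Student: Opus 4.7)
The plan is to mirror the proof of Lemma \ref{lemmaf2cont}: it suffices to prove continuity on the closed subinterval $[z_I, z_c - \varepsilon]$ for every $\varepsilon > 0$. By \refeq{defFunc3}, $f_3$ is the maximum of finitely many functions of the form $z \mapsto c_{n,l,S}^{-1} \sup_{x \in S} \Hcal^{n,l}_z(x)$, so it is enough to treat each one. Since the supremum of a family of functions whose moduli of continuity are bounded uniformly in the parameter is itself continuous with the same modulus, the whole task reduces to establishing, for each fixed $(n,l,S) \in \mathcal{S}$, a bound on $\big|\tfrac{d}{dz}\Hcal^{n,l}_z(x)\big|$ that is uniform over $x \in S$ and $z \in [z_I, z_c-\varepsilon]$.

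To this end, I would first justify differentiation under the sum in $\Hcal^{n,l}_z(x) = \sum_y \|y\|_2^2\, G_z(y)\,(G_z^{\star n}\star D^{\star l})(x-y)$. Each summand is non-negative and, by the monotonicity part of Assumption \ref{assGzBehavoir}, non-decreasing in $z$, so the sum is dominated on $[z_I, z_c-\varepsilon]$ by the corresponding expression at $z_c-\varepsilon$. Finiteness of that dominating sum follows from the bound $\sum_y \|y\|_2^2 G_{z_c-\varepsilon}(y) \leq K(z_c-\varepsilon)$ in Assumption \ref{assGzBehavoir} together with the trivial $L^\infty$-bound $\|G_{z_c-\varepsilon}^{\star n}\star D^{\star l}\|_\infty \leq G_{z_c-\varepsilon}(0)\hat G_{z_c-\varepsilon}(0)^{n-1}$. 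Once term-by-term differentiation is legitimate, the product rule produces a finite sum of terms each containing exactly one factor $\tfrac{d}{dz}G_z$, which by \refeq{assGzDiffBound} is pointwise bounded by $c_\varepsilon(G_z\star D\star G_z)$. Each resulting contribution has the form $\sum_y \|y\|_2^2\, A_z(y)\,B_z(x-y)$ where $A_z$ and $B_z$ are non-negative finite convolutions of $G_z$ and $D$; applying \refeq{Split-weight-ineq} to redistribute the weight $\|y\|_2^2$ across the convolved factors, each factor then absorbs the weight into either $K(z_c-\varepsilon)$ or $\hat G_{z_c-\varepsilon}(0)$, both finite on $[z_I,z_c-\varepsilon]$.

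This gives a bound on $\big|\tfrac{d}{dz}\Hcal^{n,l}_z(x)\big|$ depending only on $\varepsilon$, $n$, $l$ and the dimension, proving Lipschitz continuity of $z \mapsto \Hcal^{n,l}_z(x)$ uniformly in $x \in S$. The supremum over $x \in S$ then inherits the same Lipschitz constant, and the maximum over the finite index set $\mathcal{S}$ preserves continuity; letting $\varepsilon \downarrow 0$ yields continuity on all of $[z_I, z_c)$. The main technical obstacle is the proliferation of terms appearing in $\tfrac{d}{dz}(G_z^{\star n}\star D^{\star l})$ combined with the external weight $\|y\|_2^2$, which is handled cleanly by the weight-splitting inequality \refeq{Split-weight-ineq} so that each piece is controlled by one of the two uniform quantities supplied by Assumption \ref{assGzBehavoir}.
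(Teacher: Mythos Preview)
Your proposal is correct and follows essentially the same route as the paper: restrict to $[z_I,z_c-\varepsilon]$, bound $\tfrac{d}{dz}\Hcal^{n,l}_z(x)$ uniformly in $x$ using Assumption~\ref{assGzBehavoir} together with the weight-splitting inequality \refeq{Split-weight-ineq}, and conclude that the supremum over $x$ is continuous. The only cosmetic difference is that the paper phrases the passage from ``uniform derivative bound'' to ``continuity of the supremum'' via equicontinuity and an appeal to Arzel\`a--Ascoli, whereas you argue directly that a family with a common Lipschitz constant has a Lipschitz supremum; your formulation is in fact the cleaner one, since Arzel\`a--Ascoli is not really needed here.
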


\begin{proof}
We fix an $\varepsilon>0$ and prove that $(\Hcal^{n,l}_z(x))_{x\in\Zd}$ is an equicontinuous family of functions and is uniformly bounded for all $x,n,l$ for all $z\in[0,z_c-\varepsilon)$.
This allows us to obtain the continuity of $z\mapsto \sup_{x\in S} \Hcal^{n,l}_z(x)$ for all sets $S$
directly from the Arzela-Ascoli Theorem. This implies the continuity of $z\mapsto f_3(z)$ as the index set $\mathcal{S}$ over which we take the maximum in \refeq{defFunc3} is finite. By Assumption \ref{assGzBehavoir}, there exists a constant $K(z_c-\varepsilon)<\infty$ such that
	\begin{eqnarray}
	\sum_{x\in\Zd} \|x\|_2^2G_{z_c-\varepsilon}(x)<K(z_c-\varepsilon).
	\end{eqnarray}
Further, $\hat G_{z_c-\varepsilon}(0)=\chi(z_c-\varepsilon)<\infty$, so that, uniformly for $z\in[z_I,z_c-\vep]$,
	\begin{align}
	\Hcal^{n,l}_z(x) \leq& \sup_{y} (G_z^{\star n}\star D^{\star l})(y) K(z_c-\varepsilon)
	\leq \chi(z_c-\varepsilon)^n K(z_c-\varepsilon).
	\end{align}
By Assumption \ref{assGzBehavoir},
	\begin{eqnarray}
	\frac d{dz} \Hcal^{n,l}_z(x)\leq
	& c_{\varepsilon} \sum_{y} \|y\|_2^2(G_z\star D \star G_z) (y)(G_z^{\star n}\star D^{\star l})(x-y)\nnb
	&+ n c_{\varepsilon} \sum_{y} \|y\|_2^2G_z (y)(G_z^{\star (n+1)}\star D^{\star (l+1)})(x-y).
	\end{eqnarray}
We use that $\|w+x+y\|^2_2\leq 3 (\|w\|^2_2+\|x\|^2_2+\|y\|^2_2)$ for all $w,x,y\in\Zd$ to obtain
	\begin{align}
	\sum_{y\in\Zd}\|y\|_2^2(G_z\star D \star G_z) (y)
	&\leq 3\sum_{y,w} \|w\|_2^2 G_z(w)(D \star G_z) (y-w)\nnb
    &\quad +3\sum_{y,w} (G_z \star G_z) (y-w)D(w)\|w\|_2^2\nnb
	&\quad + 3\sum_{y,w}(G_z\star D)(w) G_z (y-w)\|y-w\|_2^2\nnb
	&\leq 6 K(z_c-\varepsilon) \hat G_{z_c-\varepsilon}(0)+3\hat G_{z_c-\varepsilon}(0)^2.
	\end{align}
We conclude that
	\begin{eqnarray}
	\frac d{dz} \Hcal^{n,l}_z(x) &\leq c_{\varepsilon} ( 6 K(z_c-\varepsilon)+3 G_{z_c-\varepsilon}(0) +
	n K(z_c-\varepsilon) ) \hat G_{z_c-\varepsilon}(0)^{n+1}<\infty.
	\end{eqnarray}
By the uniformity of this bound in $x$, we conclude that $(\Hcal^{n,l}_z(x))_{x\in\Zd}$ is equicontinuous.
\end{proof}

\subsubsection{Bound for the initial point $f_3(z_I)$}
\label{secAnalysisXspaceF3Initial}
In this section, we prove that $f_3(z_I)\leq \gamma_3$. By Assumption \ref{assXBoundInitialCondition}, we can bound $G_{z_I}(x)\leq \frac {2d-2}{2d-1} C_{1/(2d)}(x)$.  Thus, we can bound $f_{3}(z_I)$ using only SRW-quantities. We start by computing the derivatives of $\hat C_{1/2d}(k)=\hat C(k)=[1-\hat D(k)]^{-1}$  and $\hat D(k)$, where $\partial_s$ denotes the derivative w.r.t.\ $k_s$:
	\begin{align}
	\sum_{s=1}^d \partial_s \hat C(k)&=\sum_{s=1}^d\frac{\partial_s \hat D(k)}{[1-\hat D(k)]^2}
	=- 	\frac{\frac {1}{d}\sum_{s=1}^d \sin(k_{s})}{[1-\hat D(k)]^2},\\
	\lbeq{DeltaC-Computation}
	\bigtriangleup \hat C(k)&=\sum_{s=1}^d \Bigg(\frac{\partial_s^2 \hat D(k)}{[1-\hat D(k)]^2}
	+ 2 	\frac{(\partial_s \hat D(k))^2}{[1-\hat D(k)]^3}\Bigg),
	\end{align}
and
	\begin{eqnarray}
	\bigtriangleup \hat D(k)&=&\sum_{s=1}^d \partial_s^2\hat D(k)
	=-\frac 1 d \sum_{s=1}^d  \cos(k_{s})=-\hat D(k),\\
	\lbeq{Dsin-def}
	\sum_{s=1}^d (\partial_s \hat D(k))^2&=&\frac 1 {d^2} \sum_{s=1}^d \sin^2(k_{s}):=\hat D^{\sin}(k).
	\end{eqnarray}
We use
	\begin{align}
	\sin^2(k_{s})
	=-\frac 1 4 \left(\e^{\ii k_{s}}-\e^{-\ii k_{s}}\right)^2=\frac 1 2 - \frac 1 4 \e^{2\ii k_{s}}
	- \frac 1 4 \e^{-2\ii k_{s}}=\frac 1 2 \left[1-\cos(2k_{s})\right]
	\end{align}
to compute that
	\begin{align}
	\hat D^{\sin}(k)=\frac d {2d^2} -\frac 1 {4d^2}\sum_{\iota} \e^{-2\ii k_\iota}
	=\frac 1 {2d} [1 -\hat D(2k)].
	\lbeq{Dsin-Split}
	\end{align}
We define $\hat M(k)=\hat D(k)-2 \hat D^{\sin}(k)\hat C(k)$ and conclude from the computations above that
	\begin{align}
	\bigtriangleup \hat C(k)&=-\hat C(k)^2\hat M(k)=-\hat D(k)\hat C(k)^2
	+\frac 1 d \hat C(k)^3-\frac 1{2d^2} \sum_{\iota} \e^{-2 \ii k_\iota}
	\hat C(k)^3.
	\lbeq{XspaceDecomp-tmp8}
	\end{align}
We use this representation to compute the SRW analogue of $\Hcal^{n,l}_z(x)$ as
	\begin{align}\nn
	\sum_{y}&\|y\|_2^2C(y)(D^{\star l}\star C^{\star n})(x-y)\\
	\lbeq{XspaceDecomp-tmp9}
	&=\int_{(-\pi,\pi)^d}\hat D^l(k)\hat  C^{n+2}(k) \hat M(k)\e^{-\ii k\cdot x}\frac {d^dk}{(2\pi)^d} \\
	\lbeq{SRWDispFourierEquation}
	&=\int_{(-\pi,\pi)^d}\hat D^l(k)\hat  C^{n+2}(k) \left(\hat D(k)-\frac 1 d \hat C(k) +
	\frac 1 {2d^2} 	\sum_{\iota} \e^{2\ii k_\iota}\hat C(k)\right)\e^{-\ii k\cdot x}\frac {d^dk}{(2\pi)^d}.
	\end{align}
As we explain in more detail in Section \ref{secNumericsSRW}, we can numerically compute the SRW-integral
	\begin{eqnarray}
	\lbeq{defInlPerview}
	I_{n,l}(x):=(D^{\star l} \star C^{\star n}_{1/(2d)})(x)
	=\int_{(-\pi,\pi)^d} \frac {\hat D^l(k)}{[1-\hat D(k)]^n}\e^{-\ii k\cdot x}\frac {d^dk}{(2\pi)^d}.
	\end{eqnarray}
We use this integral to compute \refeq{SRWDispFourierEquation} numerically as
	\begin{align}
	\lbeq{SRWDispUpperBound}
	\refeq{SRWDispFourierEquation}
	&=I_{n+2,l+1}(x)-\frac  1 d I_{n+3,l}(x)+\frac 1 {2d^2} \sum_{\iota}I_{n+3,l}(x+2\ve[\iota]):=\Isupx_{n,l}(x).
	\end{align}
Thus, $f_3(z_I)$ is bounded by
	\begin{align}
	\lbeq{conditionf3-two-initial-statement}
	f_3(z_I)&\leq \frac {2d-2}{2d-1}
	\max_{\{n,l,S\}\in \mathcal{S}} \frac{ \sup_{x\in S}\Isupx_{n,l}(x)}{c_{n,l,S}},
	\end{align}
By the assumption in $P(\gamma,\Gamma,z)$, this is smaller than $\gamma_3$.

\begin{remark}[Close to the upper critical dimension]
\label{rem-closetodc}
{\rm
The bound \refeq{SRWDispUpperBound} can only be used in dimension $d\geq 2(n+3)+1$ as it uses $I_{n+3,l+1}(x)$, which is only finite in these dimensions. This restricts the analysis shown here to dimensions $d\geq d_c+3$, e.g.\ for percolation we can only use this bound for $d\geq 9$ as we require a bound on $\Hcal^{1,0}_z(x)$ to successfully apply the bootstrap argument. This problem can be avoided using a different bound for the integral. For example, using the bound
	\begin{align}
	\lbeq{Dsinabsolute-bound}
	 \hat D^{\sin}(k)=\frac 1 {d^2} \sum_{s=1}^d [1-\cos^2(k_{s})]
	\leq \frac 2 {d^2} \sum_{s=1}^d [1-\cos(k_{s})]\leq \frac 2 {d} [1-\hat D(k)]
	\end{align}
in \refeq{XspaceDecomp-tmp9}, we obtain that
	\begin{align}
	\sum_{y}&\|y\|_2^2C(y)(D^{\star l}\star C^{\star n})(x-y)\leq I_{n+2,l+1}(x)+\frac  4 d K_{n+2,l}(x),
	\end{align}
where we introduce the SRW-integral $K_{n+2,l}(x)$ in \refeq{Analysis-Def-Integral-K} below.
This and other bounds applicable in $d=d_c+1,d_c+2$, perform \emph{numerically} worse than the bound in \refeq{conditionf3-two-initial-statement}.
As we are not able to prove mean-field behavior in dimension $d_c+1,d_c+2$ anyway, we use the numerically better bound \refeq{conditionf3-two-initial-statement} instead.}
\end{remark}

\subsubsection{Preparations for the improvement of bounds for $f_3$}
\label{secAnalysisXspaceF3Decomposition}
We wish to prove for all $z\in(z_I,z_c)$ that $f_i(z)\leq \Gamma_i$ for all $i\in\{1,2,3\}$ implies $f_3(z)\leq \gamma_3$. This is the most technical part of our proof.
We do this by deriving a bound on
	\begin{align}
	\tag{\ref{Xspace-Analysis-Tool-Label}}
	\Hcal^{n,l}_z(x)=&\int_{(-\pi,\pi)^d}(-\bigtriangleup \hat G_z(k))\hat D^{l}(k)
	\hat  G^{n}_z(k)\e^{-\ii k\cdot x}\frac {d^dk}{(2\pi)^d}.
	\end{align}
For this bound, we extract the SRW-like contributions from $(-\bigtriangleup \hat G_z(k))$ by decomposing it into five terms $H_1,\dots,H_5$.
We compute the SRW-like contributions $H_1$ as in the preceding section and bound the remainder terms using Assumption \ref{assDiagBounds} and certain SRW-integrals that we define next.

\paragraph{SRW-integrals.}
Here we introduce several SRW integrals that we use to bound $\Hcal^{n,l}_z(x)$. Using the terminology introduced in Definition \ref{defSignedPermuations}, we define
	\begin{eqnarray}
	\lbeq{Analysis-dhx}
	\hdx &=&\frac 1 {2^d d!} \sum_{\nu\in \mathcal{P}_d }
	\sum_{\delta\in\{-1,1\}^d} \e^{\ii k \cdot p(x;\nu,\delta)}.
	\end{eqnarray}
Performing the sum over $\delta$ gives cosines, so that $\hdx$ is real.
The following SRW-integrals are adaptations of the integrals used in \cite[Section 1.6]{HarSla92b}: For $x\in\Zd$ and $n,l\in\Nbold$, we let
\begin{eqnarray}
	\lbeq{Analysis-Def-Integral-I}
	I_{n,l}(x)&=&\int_{(-\pi,\pi)^d}\hat D(k)^l
	\hat C(k)^n\hdx \frac {d^dk}{(2\pi)^d},\\
	\lbeq{Analysis-Def-Integral-K}
	K_{n,l}(x)&=&\int_{(-\pi,\pi)^d}|\hat D(k)|^l
	\hat C(k)^n |\hat D^{(x)}(k)|\frac {d^dk}{(2\pi)^d},\\
	\lbeq{Analysis-Def-Integral-T}
	T_{n,l}(x)&=&\int_{(-\pi,\pi)^d}|\hat D^{l} (k)|
	\hat C(k)^n |\hat D^{(x)}(k)||\hat M(k)|\frac {d^dk}{(2\pi)^d},\\
	\lbeq{Analysis-Def-Integral-U}
	U_{n,l}(x)&=&\int_{(-\pi,\pi)^d}|\hat D^{l} (k)|
	\hat C(k)^n|\hat D^{(x)}(k)||\hat D^{\sin} (k)|\frac {d^dk}{(2\pi)^d},
\end{eqnarray}
where $\hat C(k)=\hat C_{1/2d}(k)$ is the critical SRW two-point function and $\hat{M}(k)$ is defined above \refeq{XspaceDecomp-tmp8}.
For any function $f$ such that $f(x)=f(p(x;\nu,\delta))$ for all $\nu,\delta$ (see Definition \ref{defSignedPermuations}), we see that
 	\begin{eqnarray}
	\int_{(-\pi,\pi)^d}\hat f(k)\e^{-\ii k\cdot x}\frac {d^dk}{(2\pi)^d}=\int_{(-\pi,\pi)^d}\hat f(k)
	\hat D^{(x)}(k)\frac {d^dk}{(2\pi)^d}.
	\end{eqnarray}
The functions $G_z$ and $D$ have these symmetries, so that we can replace $\e^{\ii k\cdot x}$ in \refeq{Xspace-Analysis-Tool} by $\hdx$. In Section \ref{secNumericsSRW}, we show how to compute $I_{n,l}(x)$, and in Section \ref{secNumericsAdvanced}, we bound the other integrals in terms of $I_{n,l}(x)$.

\paragraph{Decomposition of the two-point function.}
We decompose $\hat G_z(k)$ and $\bigtriangleup\hat G_z(k)$ into several pieces, which we then bound in the next section.
We start with some preparations for this decomposition. For the SRW-contributions, we define
\begin{align}
\lbeq{Analysis-Def-C-eps}
\hat  C^*(k)&=\frac {1} {1-\hat F_z(0)+\afz[1-\hat D(k)]}.
\end{align}
As $\hat F_z(0)\leq 1$ and $\afz>\lowaf$, we know that $\hat C^*(k)<\frac 1 {\afz} \hat C(k)<\lowaf^{-1} \hat C(k)$.
Further, we conclude from
	\eqan{
	\lbeq{Analysis-Xbound-C-proof}
	\hat C^*(k)&=\frac {1} {1-\hat F_z(0) +\afz}
	\frac 1 {1-\frac {\afz}{1-\hat F_z(0)+\afz}+\frac {\afz} {1-\hat F_z(0) +a}  [1-\hat D(k)]}\\
	&=\frac {1} {1-\hat F_z(0) +\afz } 	\hat C_{\lambda(z)}(k),\nn
	}
with
	\eqn{
	\lambda(z)=\frac 1 {2d}\frac {\afz}{1-\hat F_z(0)+\afz},
	}
and the monotonicity of $C_\lambda(x)$ in the parameter of the generating function $\lambda$ that
\begin{align}
\lbeq{Analysis-Xbound-C-statement}
C^*(x)&\leq \frac {1} {1-\hat F_z(0) +\afz } C(x)\leq \frac {1} {\afz } C(x)\leq \frac 1 {\lowaf} C(x).
\end{align}
Thus, $C^*$ can be bounded by $\afz^{-1}C$ in $x$-space as well as in $k$-space.\\
We abbreviate
	\begin{align}
    \lbeq{FExpandDeltaFNotation}
	\deltaRfz&=\hRfz(0)-\hRfz(k),\qquad
	\deltaRpz=\hRpz(0)-\hRpz(k),\\
	\hat E (k)&=\frac {\deltaRfz\hat C^*(k)}{ 1-\hat F_z(k)},\qquad
	\hat M^*(k)=\hat D(k)-2 \hat D^{\sin}(k)\hat C^*(k).
	\end{align}
\paragraph{Bounds on key quantities.}
To bound the remainder terms, we define
	\begin{eqnarray}
	\lowK=\frac 1 {\lowaf-\betadeltaRfzlow},\quad \text{ so that }\quad \frac 1 {1-\hat F_z(k)}&\leq &\lowK \hat C(k).
	\end{eqnarray}
Further, we assume that $f_2(z)\leq\Gamma_2$, i.e., that
	\begin{eqnarray}
	\lbeq{Bound-on-Abs-Gz}
	|\hat G_z(k)|\leq \frac{2d-2}{2d-1}\Gamma_2\hat C(k),
	\end{eqnarray}
and from now on abbreviate $\Gamma_2'=\frac{2d-2}{2d-1}\Gamma_2$.
We use the bound \refeq{analys-assumed-displacement-1} in Assumption \ref{assDiagBounds} to obtain
	\begin{align}
	|\deltaRfz|&\leq [1-\hat D(k)]\sum_x \| x\|_2^2 |\Rfz(x)| \leq [1-\hat D(k)]\betadeltaRfz.
	\end{align}
Arguing as in \refeq{Xspace-Analysis-deltaArgument}-\refeq{Xspace-Analysis-deltaArgument-2} we can show that
\begin{align}
|\bigtriangleup \deltaRfz|&=\big|-\sum_x \| x\|_2^2 \Rfz(x)\e^{\ii k\cdot x} \big| \stackrel{\refeq{analys-assumed-displacement-1}}\leq \betadeltaRfz.
\end{align}
We conclude the same bounds for $\deltaRpz$, where $\betadeltaRfz$ is replaced by $\betadeltaRpz$.\\
Combining these bounds with $\hat C(k)=1/[1-\hat D(k)]$ we obtain
	\begin{align}
	\lbeq{Bound-on-EK}
	|\hat E(k)|\leq \left|\frac {\deltaRfz\hat C^*(k)}{ 1-\hat F_z(k)}\right|
	&\leq\betadeltaRfz\lowK \hat C^{*}(k)\leq \frac {\betadeltaRfz\lowK}{\lowaf } \hat C(k),
	\end{align}
and
	\begin{align}
	\lbeq{Bound-on-GzRemainder}
	|[\hRpz(k)- \deltaRfz \hat G_z(k)]\hat C^*(k)|
	&\leq \frac 1 {\afz} (\beta_{\sss R,\Phi}+ {\betadeltaRfz\Gamma'_2})\hat C(k),
	\end{align}
\paragraph{Decomposition of $\bigtriangleup\hat G_z(k)$.}
We decompose $\bigtriangleup\hat G_z(k)$ into five contributions $\hat H_i(k)$. The dominant contribution is $\hat H_1(k)$, which is defined to be
	\begin{align}
	\hat H_1(k)=& \left(\afz (\cpz+\apz\hat D(k))\hat C^*(k)+\apz\right)\hat C^*(k)\hat M^{*}(k).
	\lbeq{def-H1}
	\end{align}
The remainder terms $\hat H_2(k), \hat H_3(k), \hat H_4(k)$ and $\hat H_5(k)$ are defined as
	\begin{align}
	\hat H_2(k)=&- \Big(  \afz(\cpz+\apz\hat D(k))
	\Big( \hat C^*(k)+\frac {1}{1-\hat F_z(k)}\Big)+\apz\Big)\hat E(k) \hat M^{*}(k)\nnb
	&+\afz\frac{\hRpz(k)}{ (1-\hat F_z(k))^2}\hat M^{*}(k),
	\lbeq{def-H2} \\
	\hat H_3(k)=&2\frac {\hat D^{\sin}(k)}{1-\hat F_z(k)} \left(\afz\hat G_z(k)+\apz\right)\left(\hat E(k)-\frac {\afz-1} {1-F_z(k)}\right),\lbeq{def-H3}\\
	\hat H_4(k)=&-\frac{\bigtriangleup\hRpz(k)}{1-\hat F_z(k)}- \frac{\bigtriangleup\hRfz(k)}{1-\hat F_z(k)}\hat G_z(k),
	\lbeq{def-H4}\\
	\hat H_5(k)=&
	-2\frac {\sum_{s=1}^d (\partial_{s}\hRfz(k))^2+2\afz\partial_{s}\hat D(k)\partial_{s}\hRfz(k)}
	{(1-\hat F_z(k))^2}\hat G_z(k)\nnb
	&-\frac {2}{(1-\hat F_z(k))^2} \sum_{s=1}^d
	\left( \partial_{s} \hRpz(k)\afz \partial_{s} \hat D(k)+\partial_{s}\hat \Phi_z(k)
	\partial_{s} \hRfz(k)\right).
	\lbeq{def-H5}
	\end{align}
In Appendix \ref{sec-decomposition-f3}, we explicitly show that
	\begin{eqnarray}
	\lbeq{sec-decomposition-f3-statement}
	-\bigtriangleup \hat G_z(k)&=&\sum_{i=1}^5 \hat H_i(k).
	\end{eqnarray}
This computation is quite long and tedious. However, since it is crucial to our analysis, we give the derivation in detail in Appendix \ref{sec-decomposition-f3}. Let us now give some insight into the origin of the different contributions. The first term $\hat H_1(k)$ is a SRW-like contribution that can be bounded similarly as in Section \ref{secAnalysisXspaceF3Initial}. The second term $\hat H_2(k)$ corresponds to everything that has the factor $\hat M^{*}(k)$ and a remainder term. In $\hat H_3(k)$, we collect the remaining $\hat D^{\sin}(k)$ contributions. In $\hat H_4(k)$, we put the contributions of $\bigtriangleup\hRpz(k)$ and $\bigtriangleup\hRfz(k)$, and in $\hat H_5(k)$, we collect all products of single derivatives.

\subsubsection{Improvement of bounds  for $f_3$}
\label{subsecImproF3bar}
In this section we bound $\Hcal^{n,l}_{z}(x)$ by deriving bounds on
	\begin{eqnarray}
	\lbeq{Xspace-Analysis-Tool-repeat}
	\Hcal^{n,l}_{i,z}(x)= \int_{(-\pi,\pi)^d} \hat H_i(k)\hat D^{l}(k)
	\hat  G^{n}_z(k)\hdx \frac {d^dk}{(2\pi)^d},
	\end{eqnarray}
for $i=1,\dots,5$. We do this for $i=1,2,\ldots, 5$ one by one, starting with $\Hcal^{n,l}_{1,z}(x)$. This is the most technical part of the analysis.
We bound each term of $\Hcal^{n,l}_{i,z}(x)$ using the bounds of Assumption \ref{assDiagBounds} and the SRW-integrals
\refeq{Analysis-Def-Integral-I}-\refeq{Analysis-Def-Integral-U}.
\paragraph{Step 1: Bound on $\Hcal^{n,l}_{1,z}(x)$.}
We first recall the rearrangement of \refeq{SRWDispFourierEquation}-\refeq{SRWDispUpperBound} to see that, for $m\geq 0$,
	\begin{align}
	\int_{(-\pi,\pi)^d} \hat D(k)^l \hat C^*(k)^{m+2} \hat M^{*}(k) \e^{\ii k\cdot x}\frac {d^dk}{(2\pi)^d}
      =&\sum_{y}\|y\|_2^2C^{*}(y)(D^{\star l}\star (C^{*})^{\star m})(x-y)\nnb
	\stackrel{\refeq{Analysis-Xbound-C-statement}}\leq & (\afz)^{-(m+1)}\sum_{y}\|y\|_2^2C(y)(D^{\star l}\star C^{\star m})(x-y)\nnb
	=& (\afz)^{-(m+1)} \Isupx_{m,l}(x).
	\lbeq{SRWboundsTmpss}
	\end{align}
We perform the bounds for $n=0$, $n=1$ and $n=2$ separately.
\paragraph{Bound on a weighted line ($n=0$).}
Substituting the definition of $\Hcal^{0,l}_{1,z}(x)$, we see that \refeq{def-H1} leads to three terms.
We bound the first and second term using  \refeq{SRWboundsTmpss}.
For the third term we can not use an $x$-space representation like in \refeq{SRWboundsTmpss}.
To bound this term we repeat \refeq{XspaceDecomp-tmp9}-\refeq{SRWDispUpperBound} to see that
\begin{align}
	\int_{(-\pi,\pi)^d} &\hat D(k)^l \hat C^*(k) \hat M^{*}(k) \e^{\ii k\cdot x}\frac {d^dk}{(2\pi)^d}\nnb
	=&  (D^{\star (l+1)}\star C^*)(x)-\frac 1 {d} (D^{\star l}\star  C^*\star  C^*)(x)
	+\frac {1} {2d^2} \sum_{\iota} (D^{\star l}\star  C^*\star  C^*) (x+2\ve[\iota])\nnb
	\stackrel{\refeq{Analysis-Xbound-C-statement}}\leq &\frac 1 {\afz} I_{1,l+1}(x)+\frac 1 {2d^2} \frac {1} {\afz^2} \sum_{\iota}I_{2,l}(x+2\ve[\iota]).
\end{align}
In this way, we obtain
\begin{align}
	\lbeq{term-Bound-H1-line}
	\Hcal^{0,l}_{1,z}(x)\leq & \upcp \Isupx_{0,l}(x)+\betaap \Isupx_{0,l+1}(x)
	+\frac {\betaap} {\lowaf}I_{1,l+1}(x)+\frac 1 {2d^2} \frac {\betaap} {\lowaf^2}\sum_{\iota}I_{2,l}(x+2\ve[\iota]).
\end{align}

\paragraph{Bound on a weighted bubble ($n=1$).}
To bound $\Hcal^{1,l}_{1,z}(x)$, we expand $\hat G_z(k)$ as follows:
	\begin{align}
	\lbeq{term-toBound-H1-bubble}
	\hat G_z(k)=& (\cpz+\apz\hat D(k))\hat C^*(k)+
	\left(\hRpz(k) - \deltaRfz\hat G_z(k)\right)\hat C^*(k),
	\end{align}
so that
	\begin{align}
	\Hcal^{1,l}_{1,z}(x)
	=&\int_{(-\pi,\pi)^d}(\cpz+\apz\hat D(k)) \hat H_{1}(z)\hat C^{*}(k)
	\hdx\hat D^l(k)\frac {d^dk}{(2\pi)^d}\nnb
	&+\int_{(-\pi,\pi)^d}\left( \hRpz(k)- \deltaRfz \hat G_z(k)\right)\hat C^*(k)\hat H_{1}(z)
	\hdx\hat D^l(k)\frac {d^dk}{(2\pi)^d}.
	\end{align}
We bound the first line using \refeq{SRWboundsTmpss} and the second line using \refeq{Bound-on-GzRemainder} to obtain
	\begin{align}
	|\Hcal^{1,l}_{1,z}(x)|
	\leq & \lowaf^{-1} (\upcp)^2\Isupx_{1,l}(x) +\upcp \lowaf^{-1} \betaap\Isupx_{0,l}(x)+2\upcp\lowaf^{-1}\betaap\Isupx_{1,l+1}(x)\nnb
	&+(\betaap)^2\lowaf^{-1} \Isupx_{0,l+1}(x)+\lowaf^{-1}(\betaap)^2\Isupx_{1,l+2}(x)\nnb
	&+\frac {\beta_{\sss R,\Phi} + \betadeltaRfz\Gamma'_2}{\lowaf^2}
	(\upcp T_{3,l}(x)+\betaap T_{3,l+1}(x)+\betaap T_{2,l}(x)),
	\lbeq{term-Bound-H1-bubble-bound}
	\end{align}
with $T_{n,l}$ as defined in \refeq{Analysis-Def-Integral-T}.

\paragraph{Bound on a weighted triangle ($n=2$).}
We decompose $\hat G_z^2(k)$ into two terms as
	\begin{align}
	\lbeq{term-toBound-H1-triangle-t1}
	\hat G_z(k)^2=& \hat C^*(k)^2(\cpz+\apz\hat D(k))^2 \\
	\lbeq{term-toBound-H1-triangle-t2}
	&+\left[\hRpz(k)- \deltaRfz \hat G_z(k)\right]\hat C^*(k)
	\left[ (\cpz+\apz\hat D(k))\hat C^*(k)+ \hat G_z(k) \right].
	\end{align}
We compute the contribution \refeq{term-toBound-H1-triangle-t1} to be
	\begin{align}
	\hat H_{1}(k)&\hat C^*(k)^2(\cpz+\apz\hat D(k))^2\nnb
	&=(\cpz+\apz\hat D(k))^2 \big[\afz(\cpz+\apz\hat D(k)) \hat C^*(k)+\apz\big]
	\hat M^{*}(k)\hat C^*(k)^3.
	\end{align}
We expand the brackets and then use \refeq{SRWboundsTmpss} to bound this contribution by
	\begin{align}
	  	\lbeq{term-toBound-H1-triangle-t1-bound}
	\int_{(-\pi,\pi)^d}&\hat H_{1}(k)\hat C^*(k)^2(\cpz+\apz\hat D(k))^2
	\hdx\hat D^l(k)\frac {d^dk}{(2\pi)^d}\\
	\leq&\lowaf^{-2} (\upcp)^2  \left[ \upcp \Isupx_{2,l}(x) +\betaap\Isupx_{1,l}(x)
	+3\betaap \Isupx_{2,l+1}(x)\right]\nnb
	&+\lowaf^{-2}(\betaap)^2\upcp \left[ 2\Isupx_{1,l+1}(x)+3\Isupx_{2,l+2}(x)\right]
	+\lowaf^{-2} (\upap)^3\left[ \Isupx_{2,l+3}(x)+\Isupx_{1,l+2}(x)\right],\nn
	\end{align}
where we use that  $\Isupx_{n,l}(x)\geq 0$ for every $x\in \Zd, n,l\geq 0$ by \refeq{SRWDispFourierEquation}--\refeq{SRWDispUpperBound}.

Using \refeq{Bound-on-GzRemainder} we bound the absolute value of the minor contributions given in \refeq{term-toBound-H1-triangle-t2} by
	\begin{align}
	\lbeq{term-toBound-H1-triangle-t2-help}
 	& \frac {\beta_{\sss R,\Phi}+\betadeltaRpz\Gamma_2'} {\afz}
	\left( \frac {(\upcp+\betaap |\hat D(k)|)}{\afz}+\Gamma_2' \right)\hat C(k)^2.
	\end{align}
Thus, we bound the contributions due to \refeq{term-toBound-H1-triangle-t2} by
	\begin{align}
	&\int_{(-\pi,\pi)^d} |\hat H_1(k)|\hat D^{l}(k)\frac {\beta_{\sss R,\Phi}+\betadeltaRpz\Gamma_2'} {\afz}
	\left( \frac {(\upcp+\betaap |\hat D(k)|)}{\afz}+\Gamma_2' \right)\hat C(k)^2 |\hdx| \frac {d^dk}{(2\pi)^d}
	\nnb
	\lbeq{term-toBound-H1-triangle-t2-bound}
	\leq&
\frac {\beta_{\sss R,\Phi}+\betadeltaRpz\Gamma_2'} {\lowaf^2}
\left( \frac {\upcp}{\lowaf}+\Gamma_2' \right)
 \Big[\upcp T_{4,l}(x)+\betaap T_{4,l+1}(x)+\betaap T_{3,l}(x)\Big]\\
	&+\betaap \frac {\beta_{\sss R,\Phi}+\betadeltaRpz\Gamma_2'} {\lowaf^3}
 \Big[\upcp T_{4,l+1}(x)+\betaap T_{4,l+2}(x)+\betaap T_{3,l+1}(x)\Big].\nn
	\end{align}

\paragraph{Conclusion of Step 1.}
We have bounded the contribution due to $\hat H_1(k)$ and have obtained that
	\begin{align}
	\lbeq{term-toBound-H1}
	|\Hcal^{n,l}_{1,z}(x)|\leq
	\begin{cases}
	\refeq{term-Bound-H1-line}&\text{ for }n=0,\\
	\refeq{term-Bound-H1-bubble-bound} &\text{ for }n=1,\\
	\refeq{term-toBound-H1-triangle-t1-bound}+\refeq{term-toBound-H1-triangle-t2-bound}
	&\text{ for }n=2.
	\end{cases}
	\end{align}
By the sum of two equation numbers we mean the sum of the terms given in the right-hand sides of the corresponding equations. As for $z=z_I$, this bound uses $I_{n+2,l}(x)$ and can therefore not be used in $d=d_c+1,d_c+2$. We have chosen to use these bounds, even if other bounds would be available, as they give numerically better bounds.

\paragraph{Step 2: Bound on $\Hcal^{n,l}_{2,z}(x)$.}
For this bound we use
\begin{eqnarray}
  	\lbeq{Analysis-Def-Integral-Tstar}
	T^*_{n,l}(x)&=&\int_{(-\pi,\pi)^d}|\hat D^{l} (k)| \hat C(k)^n |\hat D^{(x)}(k)||\hat M^*(k)|\frac {d^dk}{(2\pi)^d},
\end{eqnarray}
which is an adaptation of $T_{n,l}$ defined in \refeq{Analysis-Def-Integral-T}.
In Section \ref{secNumericsAdvanced} we will bound $T^*_{n,l}$ in the same way as $T_{n,l}$.
We bound the absolute value of $\hat H_2(k)$, defined in \refeq{def-H2}, by
\begin{align}
	|\hat H_2(k)|\leq& \Big( \afz(\ \upcp+\betaap|\hat D(k)|)\Big( \afz^{-1} + \lowK \Big)\hat C(k)+\betaap\Big)
	 \betadeltaRfz\lowK \frac {\hat C(k)} {\afz} | \hat M^{*}(k)|\nnb
	&+\upaf \betadeltaRpz \lowK ^2 \hat C(k)^2 |\hat M^{*}(k)|.
\end{align}
We use that $|\hat  G_z(k)|\leq \Gamma_2'\hat C(k)$ and the integrals $T^*_{n,l}$ to bound $\Hcal^{n,l}_{2,z}(x)$ by
\begin{align}
	\lbeq{term-toBound-H2-bound}
	|\Hcal^{n,l}_{2,z}(x)|\leq\
	&\betadeltaRfz\lowK (\Gamma_2')^n\Big[(\upcp T^*_{n+2,l}(x) +\betaap T^*_{n+2,l+1}(x))\left( \lowaf^{-1} +\lowK \right)\\
	&\qquad\qquad+ \betaap\lowaf^{-1} T^*_{n+1,l}(x) \Big]
	+\upaf \betadeltaRpz \lowK ^2 T^*_{n+2,l}(x).\nn  	
\end{align}
\paragraph{Step 3: Bound on $\Hcal^{n,l}_{3,z}(x)$.}
In \refeq{def-H3}, we have defined $\hat H_3(k)$ to be
	\begin{align}
	\hat H_3(k)=&2\frac {\hat D^{\sin}(k)}{1-\hat F_z(k)} \left(\afz\hat G_z(k)+\apz\right)\left(\hat E(k)-\frac {\afz-1} {1-F_z(k)}\right).
	\end{align}
We bound $|\hat H_3(k)|$ as
	\begin{align}
	|\hat H_3(k)|\leq &\ 2\hat D^{\sin}(k) \lowK \hat C(k)  \left(\afz\Gamma_2'\hat C(k)+\apz\right)\lowK \hat C(k) \left(\frac{\betadeltaRfz}{\afz}+|\afz-1|\right).
	\end{align}
and use this bound and the integral $U_{n,l}$, defined in \refeq{Analysis-Def-Integral-U}, to bound $|\Hcal^{n,l}_{3,z}(x)|$ as follows:
	\begin{align}
	|\Hcal^{n,l}_{3,z}(x)|\leq
	\lbeq{term-toBound-H3-bound}
	&\ 2(\Gamma_2')^{n+1}\lowK^2\left(\betadeltaRfz+\upaf \max\{|\upaf-1|,  |\lowaf-1|\} \right)U_{n+3,l}(x)\\
    &+2(\Gamma_2')^n \lowK^2 \betaap\left(\betadeltaRfz\lowaf^{-1}+\max\{|\upaf-1|,  |\lowaf-1|\} \right)U_{n+2,l}(x).\nn
	\end{align}
\paragraph{Step 4: Bound on $\Hcal^{n,l}_{4,z}(x)$.}
We first bound $\hat H_4(k)$ in Fourier space as
	\begin{align*}
	|\hat H_4(k)|&\leq  \lowK \left(\betadeltaRpz + \betadeltaRfz \Gamma_2' \hat C(k)\right).
	\end{align*}
Then, we use the definition of $K_{n,l}$ in \refeq{Analysis-Def-Integral-K} to bound
	\begin{align}
	|\Hcal^{n,l}_{4,z}(x)|\leq& \lowK
	\left(\betadeltaRpz K_{n,l}(x)+ \betadeltaRfz  \Gamma_2' K_{n+1,l}(x)\right).
	\lbeq{term-toBound-H4-bound}
	\end{align}
\paragraph{Step 5: Bound on $\Hcal^{n,l}_{5,z}(x)$.}
We recall that
	\begin{align}
	\hat H_5(k)=&-2\frac {\sum_{s=1}^d \partial_{s}\hRfz(k)(2\afz\partial_{s}\hat D(k)
	+\partial_{s}\hRfz(k) )}{(1-\hat F_z(k))^2}\hat G_z(k)\nnb
	\lbeq{term-toBound-H5-tmp}
	&-\frac {2}{(1-\hat F_z(k))^2} \sum_{s=1}^d\left( \partial_{s} \hRpz(k)
	\afz \partial_{s} \hat D(k)+
	\partial_{s}\hat \Phi_z(k) \partial_{s} \hRfz(k)\right).
	\end{align}
To bound the single derivatives we note that for a totally rotationally symmetric function $f$, see Definition \ref{defSignedPermuations}, the following holds:
	\begin{eqnarray}
	\partial_s \hat f(k)=\ii\sum_x x_s f(x)\e^{\ii k\cdot x}
	&=& -\sum_{x}f(x) x_s \sin(k_{s} x_s)\prod_{\nu\neq s}\cos(k_\nu x_\nu),
	\end{eqnarray}
for $s\in\{1,\dots,d\}$, so that
	\eqn{
	\lbeq{DifferenceToFirstMoment}
	|\partial_s \hat f(k)|
	\leq \sum_{x}|f(x)| |x_s \sin(k_{s} x_s)|.
	}
Since  $|\sin(n t)|\leq n|\sin(t)|$ for integer $n$, we obtain that
	\begin{eqnarray}
	\lbeq{DifferenceToFirstMoment-single}
	|\partial_s \hat f(k)|\leq |\sin(k_{s})| \sum_{x}|f(x)| x_s^2 .
	\end{eqnarray}
The total rotational symmetry of $f$ also implies that
	\begin{eqnarray}
	\lbeq{DifferenceToFirstMoment-symmetry}
	\sum_{x}|f(x)| x_s^2=\sum_{x}|f(x)| x_t^2=\frac 1 d \sum_{x}|f(x)| \|x\|_2^2
	\end{eqnarray}
for all $s,t\in\{1,\ldots,d\}$. From this we conclude for two totally rotationally symmetric functions $f,g$ that
	\begin{eqnarray}
	\sum_{s=1}^d |\partial_s \hat f(k)\partial_s g(k) |
	\leq&  \sum_{s=1}^d \sin^2(k_{s}) \sum_{x}|f(x)| x_s^2 \sum_{y}|g(y)| y_s^2\nnb
	=& \hat D^{\sin}(k) \sum_{x} \|x\|^2_2 |f(x)|  \sum_{y} \|y\|^2_2|g(y)|,
	\end{eqnarray}
where we recall \refeq{Dsin-def}.
Using this relation we can bound $\hat H_5(k)$ by
	\begin{align}
	|\hat H_5(k)\hat  G^{n}_z(k)|
	\leq &\ 2\lowK^2\Gamma_2'^{n+1} \hat C(k)^{n+3}\hat D^{\sin}(k)
	(2\afz\betadeltaRfz+\betadeltaRfz^2)\nnb
	&+2\lowK^2\Gamma_2'^n \hat C(k)^{n+2}\hat D^{\sin}(k)(\afz\betadeltaRpz
	+\betaap\betadeltaRfz+\betadeltaRfz\betadeltaRpz).
	\end{align}
and obtain the following bound on $\Hcal^{n,l}_{5,z}(x)$:
	\begin{align}
	|\Hcal^{n,l}_{5,z}(x)|
	\leq&\ 2\lowK^2\Gamma_2'^{n+1}(2\upaf\betadeltaRfz+\betadeltaRfz^2)U_{n+3,l}(x)\nnb
	&+2\lowK^2\Gamma_2'^n (\upaf\betadeltaRpz
	+\betaap\betadeltaRfz+\betadeltaRfz\betadeltaRpz)U_{n+2,l}(x).
	\lbeq{term-toBound-H5-bound}
	\end{align}
\paragraph{Final bound on $f_3$.}
In this section, we have bounded $f_3$ by
	\begin{align}
	f_3(z) \leq& \max_{\{n,l,S\}\in \mathcal{S}}
	\frac{ \sup_{x\in S} \Big\{\refeq{term-toBound-H1}+\refeq{term-toBound-H2-bound}
	+\refeq{term-toBound-H3-bound}+\refeq{term-toBound-H4-bound}
	+\refeq{term-toBound-H5-bound}\Big\}}{c_{n,l,S}}.
	\lbeq{bound-on-f3}
	\end{align}
We recall that by the sum of several equation numbers we mean the sum of the terms given in the right-hand sides of the corresponding equations.

In summary and recalling Definition \ref{finalCondition}, when $P(\gamma,\Gamma,z)$ holds, this bound on $f_3(z)$ is smaller than $\gamma_3$. Thus, the improvement of all bounds is successful and we have thus successfully performed the bootstrap. The computation of a numerical value for the bound in \refeq{bound-on-f3} requires the computation of the SRW-integrals $I_{n,l},K_{n,l},T_{n,l},U_{n,l}$. In Section \ref{secNumericsAdvanced}, we show how to bound SRW-integrals and explain for which $x$ the supremum over $S$ is obtained.

The bootstrap function $f_3$ provides various bounds on weighed diagrams. The real size of these diagram depends heavily on the values of $n,l$ and the set $S$ involved. For example, we can expect that $\Hcal^{2,0}_{z}(x)$ is of order $O(1)$ while  $\Hcal^{2,4}_{z}(x)$ is of order $O(d^{-2})$.
Since the form of the bounds on $\Hcal^{n,l}_{z}(x)$ is the same for all $n,l$, we have introduced the constants $c_{n,l,S}$ to merge them into one bootstrap function. Alternatively, we could consider $f_3$ to consist of multiple bootstrap functions that are individually bounded by $\Gamma_3c_{n,l,S}$ within the bootstrap argument.

\section{Rewrite of the NoBLE equation}
\label{secRewrite}
In the preceding part of this paper, we have performed the analysis using the form \refeq{generalForm-simple} for the two-point function. This form is related to the classical lace expansion. We have decided to use this as it considerably simplifies the presentation of the analysis in the preceding section.

In this section, we first derive this characterization from the NoBLE equation, meaning that we identify $\apz,\afz,\Rfz$ and $\Rpz$. Then, we translate the assumptions made on the rewrite \refeq{generalForm-simple} into assumptions on the NoBLE-coefficients $\Xi_z,\Xi^{\iota}_z,\Psi^{\iota}_z,\Pi^{\iota,\kappa}_z$.

The aim of the rewrite is to extract the dominant SRW-like contributions from $\hat \Phi_z$ and
$\hat F_z$, see \refeq{DefPhi}-\refeq{DefFFunction}. These SRW-like contributions will give rise to
$\apz,\afz,\cpz,\cfz$. The remainder is put into $\Rfz$ and $\Rpz$.

Here we show how we extract SRW contributions from $\hat \Phi_z$ and $\hat F_z$ and use them in our analysis. More terms could be extracted from $\hat \Phi_z$ and $\hat F_z$, thereby reducing the value of $\Rfz$ and $\Rpz$ and thus increasing the performance of the perturbative technique. This might allow to prove the infrared bound in even smaller dimensions above the upper critical dimension. We however found the possible gain not in relation with the necessary efforts.

\subsection{Derivation of the rewrite}
\label{secRewriteDerivation}
In this section, we rewrite the functions $\hat \Phi_z(k)$ and $\hat F_z(k)$, as defined in \refeq{DefPhi}-\refeq{DefFFunction}, and identify $\apz,\afz,\Rfz$ and $\Rpz$.
The NoBLE-coefficients are defined as alternating series of non-negative real-valued functions $\Xi^{\ssc[N]}_z,$
$\Xi^{{\ssc[N]},\iota}_z,$ $\Psi^{{\ssc[N]},\iota}_z,$ $\Pi^{{\ssc[N]},\iota,\kappa}_z$:
	\begin{align}
	\lbeq{XiAsBounds}
	\Xi_z(x)&=\sum_{N=0}^\infty (-1)^N\Xi^{\ssc[N]}_z(x),\qquad
	&\Xi^{\iota}_z(x)=\sum_{N=0}^\infty (-1)^N\Xi^{{\ssc[N]},\iota}_z(x),\\
	\lbeq{XiAsBounds-2}
	\Psi^\kappa_z(x)&=\sum_{N=0}^\infty (-1)^N \Psi^{\ssc[N],\kappa}_z(x),\qquad
	&\Pi^{\iota,\kappa}_z(x)=\sum_{N=0}^\infty (-1)^N\Pi^{{\ssc[N]},\iota,\kappa}_z(x).
	\end{align}

\subsubsection{The model-dependent split of the coefficients}
\label{secRewriteSplitSummary}
When rewriting the two-point function, we extract a major SRW-like contribution. We are guided by the intuition that coefficients are of order $O((2d)^{-1})$ and that the main contributions to the NoBLE coefficients are
	\begin{align}
	\Xi^{\ssc[0]}_z(\ve[1])&\approx \Psi^{\ssc[0],\iota}_z(\ve[1]),
	&\Xi^{\ssc[1]}_z(\ve[1])\approx \Psi^{\ssc[1],\kappa}_z(\ve[1]),\\
	\Xi^{\ssc[0],\iota}_z(\ve[\iota])&\approx \aaz\Pi^{\ssc[0],\iota,\kappa}_z(\ve[\iota]).
	\end{align}
Due to the limitation of our bounds it is not beneficial to extract all these contributions. Thus, we create a model-dependent split of the coefficients to improve the performance of the technique.  We define non-negative functions
	\begin{align*}
	\Xi^{\ssc[0]}_{\alpha,z},\quad \Xi^{\ssc[1]}_{\alpha,z},\quad
	\Psi^{\ssc[0],\iota}_{\alpha,{\sss I},z},\quad \Psi^{\ssc[0],\iota}_{\alpha,{\sss II},z},\quad
	\Psi^{\ssc[1],\iota}_{\alpha,{\sss I},z},\quad \Psi^{\ssc[1],\iota}_{\alpha,{\sss II},z},\quad
	\Xi^{\ssc[0],\iota}_{\alpha,{\sss I},z},\quad \Xi^{\ssc[0],\iota}_{\alpha,{\sss II},z},\quad
	\Pi^{\ssc[0],\iota,\kappa}_{\alpha,z},\\
	\Xi^{\ssc[0]}_{{\sss R},z},\quad \Xi^{\ssc[1]}_{{\sss R},z},\quad
	\Psi^{\ssc[0],\iota}_{{\sss R, I},z},\quad \Psi^{\ssc[0],\iota}_{{\sss R, II},z},\quad
	\Psi^{\ssc[1],\iota}_{{\sss R, I},z},\quad \Psi^{\ssc[1],\iota}_{{\sss R, II},z},\quad
	\Xi^{\ssc[0],\iota}_{{\sss R, I},z},\quad \Xi^{\ssc[0],\iota}_{{\sss R, II},z},\quad
	\Pi^{\ssc[0],\iota,\kappa}_{{\sss R},z}.
	\end{align*}
Here these functions satisfy that, for $N=0,1$ and all $x\in \Zd$,
	\begin{align*}
	\Xi^{\ssc[N]}_{z}(x)&=\Xi^{\ssc[N]}_{\alpha,z}(x)+\Xi^{\ssc[N]}_{{\sss R},z}(x),
	\qquad
	\Pi^{\ssc[0],\iota,\kappa}_{\alpha,z}(x)=\Pi^{\ssc[0],\iota,\kappa}_{\alpha,z}(x)
	+\Pi^{\ssc[0],\iota,\kappa}_{{\sss R},z}(x),\\
	\Psi^{\ssc[N],\iota}_{z}(x) &=\Psi^{\ssc[N],\iota}_{\alpha,{\sss I},z}(x)
	+\Psi^{\ssc[N],\iota}_{{\sss R, I},z}(x)
	=\Psi^{\ssc[N],\iota}_{\alpha,{\sss II},z}(x)+\Psi^{\ssc[N],\iota}_{{\sss R, II},z}(x),\\
	\Xi^{\ssc[0],\iota}_{z}(x)&=\Xi^{\ssc[0],\iota}_{\alpha,{\sss I},z} (x)
	+\Xi^{\ssc[0],\iota}_{{\sss R, I},z}(x)
	=\Xi^{\ssc[0],\iota}_{\alpha,{\sss II},z} (x)+\Xi^{\ssc[0],\iota}_{{\sss R, II},z}(x),
	\end{align*}
and, for $x\in \Zd$ with $\|x\|_2>1$,
	\begin{align*}
	\Xi^{\ssc[N]}_{\alpha,z}(x)&=0,
	\qquad\Psi^{\ssc[N],\iota}_{\alpha,{\sss I},z}(x+\ve[\iota])=0,
	\qquad\Psi^{\ssc[N],\iota}_{\alpha,{\sss II},z}(x)=0,\\
	\Xi^{\ssc[0],\iota}_{\alpha,{\sss I},z}(x+\ve[\iota])&=0,
	\qquad\Xi^{\ssc[0],\iota}_{\alpha,{\sss II},z}(x)=0,
 	\end{align*}
and
	\begin{align*}
	\Pi^{\ssc[0],\iota,\kappa}_{\alpha,z}(x)=0,
 	\end{align*}
for $x\nin\{ \ve[\iota],\ve[\iota]+\ve[\kappa]\}$. Further, these functions have the same symmetries as the original coefficients. The idea behind these two different splits (giving rise to the terms with subscripts $I$ and $II$, respectively) is that we split off specific contributions that can be explicitly incorporated in the constant and $\hat{D}(k)$ terms in our expansion. Contributions with subscript $I$ correspond to $x$ for which $\|x-\ve[\iota]\|\leq 1$, while contributions with subscript $II$ correspond to $x$ for which $\|x\|\leq 1$. In Fourier space, this corresponds to contributions with a factor $\e^{\ii k(x-\ve[\iota])}$ and $\e^{\ii k x},$ respectively. See \refeq{Splitf-tmp2} below for how such contributions will arise.

\subsubsection{The Fourier inverse of $\hat F$ and $\hat \Phi$}
Throughout this section, we omit $z$ from our notation and write, e.g., $\aaz=\aa$ and  $\hat F_z(k)=\hat F(k)$. As a first step, we use the Neumann-series to rewrite $\hat F$ and $\hat \Phi$ into a form without matrices. We use that $(\mD[k] + \aa \mJ)^{-1}=(\mD[-k]-\aa\mJ)/(1-\aa^2)$ to rearrange $\hat F(k)$ as
	\begin{eqnarray}
	\hat F(k)&=&\aa\left(\v1+\vPsi[k]\right)\left[ \mD[k] + \aa\mJ +\mPiwoz[k]\right]^{-1} \v1\nnb
	&=&\frac {\aa} {1-\aa^2}  \left(\v1+\vPsi[k]\right)
	\left[ \mI + \frac 1 {1-\aa^2}(\mD[-k] -\aa\mJ) \mPiwoz[k] \right]^{-1} (\mD[-k] -\aa\mJ) \v1\nnb
	&=&\frac {\aa} {1-\aa^2}  \left(\v1+\vPsi[k]\right)
	\sum_{n=0}^{\infty} (-1)^n \left(\frac 1 {1-\aa^2}(\mD[-k] -\aa\mJ) \mPiwoz[k] \right)^{n} (\mD[-k] -\aa\mJ) \v1\nnb
	&=&\frac {\aa} {1-\aa^2}  \sum_{n=0}^{\infty} \sum_{\iota_0,\dots,\iota_n} \left(1+\hat \Psi^{\iota_0}(k)\right)
	\frac {(-1)^n} {(1-\aa^2)^n}\nnb
	\lbeq{PriorTodeff1function}
	&&\qquad\qquad\times\left(\prod_{s=1}^{n} (\e^{-\ii k_{\iota_{s-1}}}\hat \Pi^{\iota_{s-1},\iota_{s}}(k)
	-\aa\hat \Pi^{-\iota_{s-1},\iota_{s}}(k))\right) (\e^{-\ii k_{\iota_n}} -\aa) .
	\end{eqnarray}
We define $\hat F_{n}$ as the $n$th contribution in the sum in \refeq{PriorTodeff1function} and analyze these terms separately.
The Fourier inverse of $\hat F_{n}=\hat F_{n,z}$ is given by
	\begin{align}
	\lbeq{deff0function}
	F_{0}(x)=&\frac {\aa} {1-\aa^2}\sum_{\iota}\left(\delta_{x,-\ve[\iota]}
	-\aa \delta_{0,x}+\Psi^{\iota}(x+e_{\iota})-\aa \Psi^{\iota}(x)\right),\\
	\nonumber
	F_{n}(x)=&\aa \sum_{\iota_0,\dots,\iota_n}\sum_{x_i:\sum_ix_i=x}
	\frac {(-1)^n (\delta_{x_0,0}+\Psi^{\iota_0}(x_0)) } {(1-\aa^2)^{n+1}}
	\left(\prod_{s=1}^{n-1} (\Pi^{\iota_{s-1},\iota_{s}}(x_s+\ve[\iota_{s-1}]) -\aa\Pi^{-\iota_{s-1},\iota_{s}}(x_s))\right)\\
	&\times \big( \Pi^{\iota_{n-1},\iota_{n}}(x_n+\ve[\iota_{n-1}]+\ve[\iota_{n}])
	-\aa\Pi^{\iota_{n-1},\iota_{n}}(x_n+\ve[\iota_{n-1}]) \nnb
	&\qquad\qquad\qquad\qquad-\aa\Pi^{-\iota_{n-1},\iota_{n}}(x_n+\ve[\iota_n])
	+\aa^2\Pi^{-\iota_{n-1},\iota_{n}}(x_n)\big).
 	\lbeq{deffnfunction}
 	\end{align}
In a similar way, we define $\Phi_n$ such that
	\begin{align}
	\hat \Phi(k)&= \sum_{n=0}^\infty \hat \Phi_{n}(k),\qquad\text{ so that also }\qquad \Phi(x)= \sum_{n=0}^\infty \Phi_{n}(x).
	\end{align}
These function are given by
	\begin{align}
	\hat \Phi_{0}(k)=&1+\hat \Xi(k)- \frac {\aa} {1-\aa^2}\sum_{\iota}
	(1+\hat \Psi^{\iota}(k)) (\hat \Xi^{\iota}(k)\e^{-\ii k_\iota}-\aa\hat \Xi^{-\iota}(k)),\\
	\hat \Phi_{n}(k)=& \aa \sum_{\iota_0,\dots,\iota_n} (1+\hat \Psi^{\iota_0}(k))  \frac {(-1)^{n+1}} {(1-\aa^2)^{n+1}} \\
	\nn
	&\times \prod_{s=1}^{n} \left(\hat \Pi^{\iota_{s-1},\iota_{s}}(k) \e^{-\ii k_{\iota_{s-1}}}
	-\aa\Pi^{-\iota_{s-1},\iota_{s}}(x_s)\right) (\hat \Xi^{\iota_n}(k) \e^{-\ii k _{\iota_n}}-\aa\hat \Xi^{-\iota_n}(k)).
	\end{align}
The Fourier inverses of these functions are
	\begin{align}
	\lbeq{defphi0function}
	\Phi_{0}(x)=&\delta_{0,x}+\Xi(x)
	- \frac {\aa} {1-\aa^2}\sum_{\iota,y} (\delta_{0,y}+\Psi^{\iota}(y)) (\Xi^{\iota}(x-y+\ve[\iota])-\aa\Xi^{-\iota}(x-y)),\\
	\lbeq{defphinfunction}
	\Phi_{n}(x)=& \aa \sum_{\iota_0,\dots,\iota_n}\sum_{x_i:\sum_ix_i=x} (\delta_{0,x_0}+\Psi^{\iota_0}(x_0))
	\frac {(-1)^{n+1}} {(1-\aa^2)^{n+1}} \\
	\nn
	&\qquad\times \prod_{s=1}^{n} (\Pi^{\iota_{s-1},\iota_{s}}(x_s+\ve[\iota_{s-1}]) -\aa\Pi^{-\iota_{s-1},\iota_{s}}(x_s))
	(\Xi^{\iota_n}(x_{n+1}+\ve[\iota_n])-\aa\Xi^{-\iota_n}(x_{n+1})).
	\end{align}

\subsubsection{Definition of the rewrite}
\label{secDefinitionRewrite}
In the rewrite, we extract explicit terms that are independent of $k$ and terms that involve $\hat D(k)$ for $\hat F$ and $\hat \Phi$.  Everything else is put into the remainder terms $\hRfz$ and $\hRpz$.  The major contributions that we can extract are part of $\hat F_0$ and $\hat \Phi_0$. Also $\hat F_1$ gives some contributions. We begin with $\hat F_0$ and rewrite it as
	\begin{align}
	\nonumber
	\hat F_0(k)=&\frac {\aa} {1-\aa^2}  \sum_{\iota} \left(1+\hat \Psi^{\iota}(k)\right) (\e^{-\ii k_{\iota}} -\aa) \\
	\lbeq{Splitf-tmp1}
	=& \frac {\aa} {1-\aa^2} \Big(2d \hat D(k)-2d \aa +
	\sum_{N=0}^\infty \sum_{\iota} (-1)^N \hat \Psi^{\ssc[N],\iota}(k) (\e^{-\ii k_{\iota}} -\aa)\Big).
	\end{align}
Recall that the lace-expansion coefficients are defined via an alternating series of non-negative functions, see \refeq{XiAsBounds}, \refeq{XiAsBounds-2}.
For $N=0,1$, we split the sum into
	\begin{align}
	\sum_{\iota}\hat \Psi^{\ssc[N],\iota}(k) (\e^{-\ii k_{\iota}} -\aa)
	&=\sum_{\iota,x}\e^{\ii k\cdot x} \left( \Psi^{\ssc[N],\iota}(x+\ve[\iota]) -\aa \Psi^{\ssc[N],\iota}(x)\right)\nnb
	&=2d(\Psi^{\ssc[N],\iota}_{\alpha,{\sss I}}(\ve[1])-\aa\Psi^{\ssc[N],\iota}_{\alpha,{\sss II}}(0))\nnb
	&\quad+2d \hat D(k)\sum_{\kappa} \Big(\Psi^{\ssc[N],\sss 1}_{\alpha,{\sss I}}(\ve[1]+\ve[\kappa])
	-\aa\Psi^{\ssc[N],\sss 1}_{\alpha,{\sss II}}(\ve[\kappa])\Big)\nnb
	&\quad+\sum_{\iota}\sum_{x\in\Zd}\e^{\ii k\cdot x} \left(\Psi^{\ssc[N],\iota}_{{\sss R, I}} (x+\ve[\iota])
	-\aa \Psi^{\ssc[N],\iota}_{{\sss R, II}}(x)\right),
	\lbeq{Splitf-tmp2}
	\end{align}
where we see how the splits involving the subscripts $I$ and $II$ are used to extract random-walk contributions.
From
\begin{align}
	\hat F_1(k)&=\frac {-\aa} {(1-\aa^2)^2} \sum_{\iota_0,\iota_1} \left(1+\hat \Psi^{\iota_0}(k)\right)
	\Big(\e^{-\ii k_{\iota_{0}}}\hat \Pi^{\iota_{0},\iota_{1}}(k) -\aa\hat \Pi^{-\iota_{0},\iota_{s}}(k))\Big)
	(\e^{-\ii k_{\iota_1}} -\aa),
	\lbeq{Splitf-tmp2.5}
	\end{align}
we extract the contribution of $1 \times \e^{-\ii k_{\iota_{0}}} \Pi^{\ssc[0],\iota_0,\iota_1} \times \e^{-\ii k_{\iota_{1}}} $ and split it as
	\begin{align}
	\sum_{\iota_0,\iota_1}
	\e^{-\ii (k_{\iota_{0}}+k_{\iota_{1}})}\hat \Pi^{\ssc[0],\iota_{0},\iota_{1}}(k)
	&=
	2d\sum_\kappa
	\Big( \Pi^{\ssc[0],1,\kappa}_{\alpha} (\ve[1]+\ve[\kappa])
	+\hat D(k) \Pi^{\ssc[0],1,\kappa}_{\alpha} (\ve[1])\Big)\nnb
	&\quad 
	+\sum_{\iota_0,\iota_1}
	\sum_{x\in\Zd}\e^{\ii k\cdot x}\Pi^{\ssc[0],\iota_{0},\iota_{1}}_{\sss R}(x+\ve[\iota_0]+\ve[\iota_1]).
	\lbeq{Splitf-tmp3}
	\end{align}
We have now collected all the terms of the split in the lines \refeq{Splitf-tmp1}-\refeq{Splitf-tmp3}. The constant terms contribute to $\cfz$. Terms involving $\hat D(k)$ give rise to $\afz$. All other terms contribute to $\hRfz(k)$. Thus, we conclude that
	\begin{align}
	\lbeq{detailed-Def-cfz}
	\cfz=&-\frac {2d\aaz^2} {1-\aaz^2}+
	\frac {2d \aaz}{1-\aaz^2}\sum_{N\in\{0,1\}} (-1)^N(\Psi^{\ssc[N],\iota}_{\alpha,{\sss I},z}(\ve[1])
	-\aa\Psi^{\ssc[N],\iota}_{\alpha,{\sss II},z}(0))\nnb
	& -\frac {2d\aaz} {(1-\aaz^2)^2} \sum_\kappa \Pi^{\ssc[0],1,\kappa}_{\alpha,z} (\ve[1]+\ve[\kappa]),\\
	\lbeq{detailed-Def-afz}
	\afz=&\frac {2d \aaz} {1-\aaz^2} \Big[1+\sum_{N\in\{0,1\}}(-1)^N \sum_{\iota}
	\left(\Psi^{\ssc[N],1}_{\alpha,{\sss I}}(\ve[1]+\ve[\iota])-\aa\Psi^{\ssc[N],1}_{\alpha,{\sss II}}(\ve[\iota])\right)\Big]\nnb
	&-\frac {2d\aaz} {(1-\aaz^2)^2} \sum_\kappa \Pi^{\ssc[0],1,\kappa}_{\alpha} (\ve[1]),
	\end{align}
and
	\begin{align}
	\lbeq{Def-Rfz}
	\hRfz(k)=\hat F_z(k)-\cfz-\afz\hat D(k),
	\end{align}
which is the sum of the final contributions to the right hand side of  \refeq{Splitf-tmp1}, \refeq{Splitf-tmp2},  \refeq{Splitf-tmp3} and the remainder of \refeq{Splitf-tmp2.5}. We rewrite $\hat \Phi(k)$ in the same way. We begin by noting that
	\begin{align}
	\lbeq{Phi0-split}
	\hat \Phi_{0}(k)=&\ 1+ \sum_{N\in\{0,1\}} (-1)^N \hat \Xi^{\ssc[N]}(k) +\sum_{N=2}^\infty (-1)^N \hat \Xi^{\ssc[N]}(k) \\
	&-\frac {\aa} {1-\aa^2}\sum_{\iota} (1+\hat \Psi^{\iota}(0)) (\hat \Xi^{\iota}(k)\e^{-\ii k_\iota}-\aa\hat \Xi^{-\iota}(k)).\nn
	\end{align}
For $N=0,1$, we split $\hat \Xi^{\ssc[N]}(k)$ as
	\begin{align}
	\hat \Xi^{\ssc[N]}(k)=&\ \Xi^{\ssc[N]}_{\alpha}(0)+2d \hat D(k) \Xi^{\ssc[N]}_{\alpha}(\ve[1])+ \hat \Xi^{\ssc[N]}_{{\sss R}}(k).
	\end{align}
Further, we extract the contribution of the factor $1$ and $\Xi^{\ssc[0], \iota}$ in the second line of \refeq{Phi0-split} as
	\begin{align}
	\sum_{x,\iota} \e^{\ii k\cdot x} (\Xi^{\ssc[0],\iota}(x+\ve[\iota])-\aa\Xi^{\ssc[0],-\iota}(0))
	=&2d (\Xi^{\ssc[0],\iota}_{\alpha,{\sss I}}(\ve[\iota])-\aa\Xi^{\ssc[0],\iota}_{\alpha,{\sss II}}(0))\\
	&+2d\hat D(k)\sum_\kappa \left(\Xi^{\ssc[0],\iota}_{\alpha,{\sss I}}(\ve[\kappa]+\ve[\iota])
	-\aa\Xi^{\ssc[0],\iota}_{\alpha,{\sss II}}(\ve[\kappa])\right)\nnb
	&+\sum_{\iota} \sum_{x\in\Zd} \e^{\ii k\cdot x} (\Xi^{\ssc[0],\iota}_{{\sss R, I}}(x+\ve[\iota])
	-\aa\Xi^{\ssc[0],-\iota}_{{\sss R, II}}(x)).\nn
	\end{align}
We define
	\begin{align}
	\lbeq{detailed-Def-cpz}
	\cpz=&\ 1+\sum_{N\in\{0,1\}} (-1)^N \Xi^{\ssc[N]}_{\alpha,z}(0)
	- \frac {2d \aaz} {1-\aaz^2}\left(\Xi^{\ssc[0],\iota}_{\alpha,{\sss I},z}(\ve[\iota])
	-\aaz\Xi^{\ssc[0],\iota}_{\alpha,{\sss II},z}(0)\right),\\
	\lbeq{detailed-Def-apz}
	\apz=&\ 2d\sum_{N\in\{0,1\}} (-1)^N \Xi^{\ssc[N]}_{\alpha,z}(\ve[1])
	- \frac {2d \aaz} {1-\aaz^2}\sum_\kappa \left(\Xi^{\ssc[0],\iota}_{\alpha,{\sss I}}(\ve[\kappa]+\ve[\iota])
	-\aaz\Xi^{\ssc[0],\iota}_{\alpha,{\sss II}}(\ve[\kappa])\right),\\
	\hRpz(k)=&\hat \Phi_z(k)-\cpz-\apz\hat D(k).
	\end{align}
This completes the derivation of the rewrite \refeq{DefPhi-simple} and \refeq{DefFFunction-simple} and identifies $\afz,\apz,\cfz,$ $\cpz,$ $\hRfz$ and $\hRpz$.

At this point, it is worth mentioning that this is not the only possible split. Indeed, we could try to put more terms into $\afz,\apz$, thus reducing $\hRfz,\hRpz$, and thereby improving the efficiency of the analysis. However, numerically we found that the possible gain would not be in relation to the necessary efforts, so we refrain from this.

\subsection{Assumption on the NoBLE coefficients}
\label{assNobleAssumptions}
In this section, we reformulate Assumption \ref{assSym}- \ref{assGzBehavoirCritical} on $\afz,\apz,\hRfz$ and $\hRpz$ in terms of the NoBLE coefficients.  We assume that the NoBLE coefficients have the following properties:
\begin{ass}[Symmetry of the models]
\label{assSymtwo}
Let $\iota,\kappa\in\{\pm 1,\pm 2,\dots,\pm d\}$. The following symmetries hold for all $x\in\Zd$, $z\leq z_c$, $N\in\Nbold$ and $\iota,\kappa$:
	\begin{eqnarray*}
	\Xi^\ssc[N]_z(x)&=& \Xi^\ssc[N]_z(-x), \qquad  \qquad \qquad
	\Xi^{\ssc[N],\iota}_z(x)= \Xi^{\ssc[N],-\iota}_z (-x),\\
	\Psi^{\ssc[N],\iota}_z(x)&=& \Psi^{\ssc[N],-\iota}_z(-x), \qquad \qquad\
	\Pi^{\ssc[N],\iota,\kappa}_z(x)= \Pi^{\ssc[N],-\iota,-\kappa}_z (-x).
	\end{eqnarray*}
For all $N\in\Nbold$, the coefficients
	\begin{align}
	\Xi^\ssc[N](x),\qquad \sum_{\iota}\Psi^{\ssc[N],\iota}_z(x),
	\qquad \sum_{\iota}\Xi^{\ssc[N],\iota}_z(x) \quad \text{and}\quad \sum_{\iota,\kappa}\Pi^{\ssc[N],\iota,\kappa}_z(x),
	\end{align}
as well as the remainder terms of the split
	\begin{align}
	\Xi^{\ssc[N]}_{{\sss R},z}(x),\quad \sum_{\iota}\Psi^{\ssc[N],\iota}_{{\sss R, I},z}(x),
	\quad \sum_{\iota}\Psi^{\ssc[N],\iota}_{{\sss R, II},z}(x),
	\quad \sum_{\iota}\Xi^{\ssc[0],\iota}_{{\sss R, I},z}(x)
	\quad \sum_{\iota}\Xi^{\ssc[0],\iota}_{{\sss R, II},z}(x),
	\quad \sum_{\iota,\kappa}\Pi^{\ssc[0],\iota,\kappa}_{{\sss R},z}(x),
	\end{align}
are totally rotationally symmetric functions of $x\in\Zd$.
Further, the dimensions are exchangeable, i.e., for all $\iota,\kappa$,
	\begin{align}
 	\lbeq{assCoefficentsDimInterchange}
	\hat \Psi^{\ssc[N],\iota}_z(0)=\ \hat \Psi^{\ssc[N],\kappa}_z(0),\qquad
	\hat \Xi^{\ssc[N],\iota}_z(0)=\ \hat \Xi^{\ssc[N],\kappa}_z(0),\qquad
	\sum_{\kappa'}\hat \Pi^{\ssc[N],\iota,\kappa'}_z(0)=\sum_{\iota'}\hat \Pi^{\ssc[N],\iota',\kappa}_z(0).
	\end{align}
\end{ass}
\medskip

The next assumption states a bound on $\Psi^{\ssc[N],\kappa}_z$ and $\Pi^{\ssc[N],\iota,\kappa}_z$ in terms of $\Xi^{\ssc[N],\iota}_z$:

\begin{ass}[Relation between coefficients]
\label{assCoefficentsRelation}
For all $x\in\Zd$, $p\leq p_c$, $N\in\Nbold$ and $\iota,\kappa\in\{\pm 1,\pm 2,\dots,\pm d\}$, the following bounds hold:
	\begin{align}
	\lbeq{XidominatespsiImproved}
	\Psi^{\ssc[N],\kappa}_z(x)\leq&\frac {\aabz}{ \aaz} \Xi^\ssc[N]_z(x),
	\qquad\qquad\Pi^{\ssc[N],\iota,\kappa}_z(x)\leq  \aabz \Xi^{\ssc[N],\iota}_z(x).
	\end{align}
\end{ass}
\medskip

As explained in Section \ref{subsecStructureAna}, to successfully apply the bootstrap argument, we assume that the coefficients obey certain bounds when the bootstrap assumption $f_i(z)\leq \Gamma_i$ holds for all $i\in\{1,2,3\}$ for a given $z\in[0,z_c)$. These bounds do not depend on the value of $z$. However, their form is delicate and depends sensitively on the precise model under consideration. We assume that the same bounds hold for $z_I$ regardless of the values $f_1(z_I),f_2(z_I),f_3(z_I)$. Assumption \ref{assDiagBoundsCoeff} is the most technical assumption of this paper, and is phrased so as to allow maximal flexibility in the application of the NoBLE: 

\begin{ass}[Diagrammatic bounds]
\label{assDiagBoundsCoeff}
Let $\Gamma_1,\Gamma_2,\Gamma_3\geq 0$. Assume that $z\in (z_I,z_c)$ is such that $f_i(z)\leq \Gamma_i$ for $i\in\{1,2,3\}$ holds.
Then $\hat G_z(k)\geq 0$ for all $k\in(-\pi,\pi)^d$. There exists $\betaaa\geq 1,\betaaalow>0$ such that
	\begin{align}
	\lbeq{analys-assumed-rho-Bound-two}
	\frac {\aabz}{ \aaz}\leq \betaaa,\qquad \aaz \geq \betaaalow.
	\end{align}
Further, there exist $\beta_{\sss \Xi}^\ssc[N],\beta_{\sss \Xi^\iota}^\ssc[N],\beta_{{\sss \Delta \Xi}}^\ssc[N],\beta_{{\sss \Delta \Xi^{\iota}},0}^\ssc[N],\beta_{{\sss \Delta \Xi^{\iota}},\iota}^\ssc[N] \geq 0$, such that
	\begin{align}
 	\lbeq{analys-assumed-BoundXi}
	\hat \Xi^\ssc[N]_z(0)&\leq \beta_{\sss \Xi}^\ssc[N],\qquad\qquad \hat \Xi^{\ssc[N],\iota}_z(0)
	\leq \beta_{{\sss \Xi}^{\iota}}^\ssc[N],\\
	\lbeq{analys-assumed-displacement-one-InXDiff}
	\sum_{x}\|x\|_2^2\Xi^{\ssc[N]}_z(x)&\leq \beta_{{\sss \Delta \Xi}}^\ssc[N],
	\qquad\quad\sum_{x} \|x\|_2^2 \Xi^{\ssc[N],\iota}_z(x)\leq \beta_{{\sss \Delta \Xi^{\iota},0}}^\ssc[N],\\
	\lbeq{analys-assumed-displacement-three-InXDiff}
	\sum_{x} \|x-\ve[\iota]\|_2^2 \Xi^{\ssc[N],\iota}_z(x)&\leq \beta_{{\sss \Delta \Xi^{\iota},\iota}}^\ssc[N],
	\end{align}
for all $N\geq 0$ and $k\in(-\pi,\pi)^d$. Moreover, we assume that $\sum_{N=0}^\infty \beta_{\bullet}^\ssc[N] <\infty$ for $\bullet \in\{ \Xi, \Xi^{\iota}, \Delta\Xi, \{\Delta \Xi^\iota,0\},\{\Delta \Xi^\iota,\iota\}\}$ and that
	\begin{align}
	\lbeq{analys-assumed-invertablecondition}
	\frac {(2d-1)\aabz}{1-\aaz}\sum_{N=0}^\infty\beta_{{\sss \Xi}^{\iota}}^\ssc[N]<1.
	\end{align}
Further, there exist $\underline{\beta}_{\sss \Psi}^\ssc[0]$, $\underline{\beta}_{\sss \sum \Pi}^\ssc[1]$  such that
	\begin{align}
	\hat \Psi^{\ssc[0],\iota}_z(0)\geq&\  \underline{\beta}_{\sss \Psi}^\ssc[0],
	&\sum_\kappa \hat \Pi^{\ssc[1],\iota,\kappa}_z(0)\geq\  \underline{\beta}_{\sss
	\sum\Pi}^\ssc[1].
	\end{align}
Additionally, there exist $\beta_{{\sss\Xi_\alpha(0)}}^\ssc[1-0],\beta_{{\sss\Xi_\alpha(0)}}^\ssc[0-1],\beta_{{\sss\Xi_\alpha(\ve[1])}}^\ssc[1-0],\beta_{{\sss\Xi_\alpha(\ve[1])}}^\ssc[0-1]$ with
	\begin{align}
	\lbeq{boundOnXizero}
	-\beta_{{\sss\Xi_\alpha(0)}}^\ssc[1-0]\leq
	& \Xi^\ssc[0]_{\alpha,z}(0)-\Xi^\ssc[1]_{\alpha,z}(0)\leq \beta_{{\sss\Xi_\alpha(0)}}^\ssc[0-1],\\
	-\beta_{{\sss\Xi_\alpha(\ve[1])}}^\ssc[1-0]
	\leq& \Xi^\ssc[0]_{\alpha,z}(\ve[1])-\Xi^\ssc[1]_{\alpha,z}(\ve[1])
	\leq \beta_{{\sss\Xi_\alpha(\ve[1])}}^\ssc[0-1] ,
	\end{align}
and $\beta_{{\sss\Xi^{\iota}_\alpha,I}}^\ssc[0],\beta_{{\sss \sum}{\sss\Xi^{\iota}_\alpha,I}}^\ssc[0],\beta_{{\sss\Xi^{\iota}_\alpha,II}}^\ssc[0], \beta_{ {\sss\sum \Xi^{\iota}_\alpha,II}}^\ssc[0],
\geq 0$ such that
	\begin{align}
	\Xi^{\ssc[0],\iota}_{\alpha,{\sss I},z}(\ve[\iota])
	\leq& \beta_{{\sss\Xi^{\iota}_\alpha,I}}^\ssc[0],\qquad
	\sum_\kappa\Xi^{\ssc[0],\iota}_{\alpha,{\sss I},z}(\ve[\iota]+\ve[\kappa])
	\leq \beta_{{\sss\sum\Xi^{\iota}_\alpha,I}}^\ssc[0],\\
	\Xi^{\ssc[0],\iota}_{\alpha,{\sss II},z}(0)\leq& \beta_{{\sss\Xi^{\iota}_\alpha,II}}^\ssc[0],
	\qquad
	\sum_\kappa\Xi^{\ssc[0],\iota}_{\alpha,{\sss II},z}(\ve[\kappa])\leq
	\beta_{{\sss\sum \Xi^{\iota}_\alpha,II}}^\ssc[0].
	\end{align}
Also, there exist
$\beta_{{\sss\sum \Psi^{\iota}_\alpha,I}}^\ssc[0-1],\beta_{{\sss\sum \Psi^{\iota}_\alpha,II}}^\ssc[0-1],$
$\beta_{{\sss\sum \Psi^{\iota}_\alpha,I}}^\ssc[1-0],$
$\beta_{{\sss\sum \Psi^{\iota}_\alpha,II}}^\ssc[1-0],$
$\underline{\beta}_{ \sss \sum \Pi_\alpha}^\ssc[0],$
$\beta_{\sss\sum \Pi_\alpha}^\ssc[0]$, such that
	\begin{align}
	-\beta_{{\sss\sum \Psi^{\iota}_\alpha,I}}^\ssc[1-0]\leq&
	\sum_\kappa \left(\Psi^{\ssc[0],\iota}_{\alpha,{\sss I},z}(\ve[\iota]+\ve[\kappa])
	-\Psi^{\ssc[1],\iota}_{\alpha,{\sss I},z}(\ve[\iota]+\ve[\kappa])  \right)
	\leq \beta_{{\sss\sum \Psi^{\iota}_\alpha,I}}^\ssc[0-1],\\
	-\beta_{{\sss\sum \Psi^{\iota}_\alpha,II}}^\ssc[1-0]
	\leq&
	\sum_\kappa \left(\Psi^{\ssc[0],\iota}_{\alpha,{\sss II},z}(\ve[\kappa])  -\Psi^{\ssc[1],\iota}_{\alpha,{\sss II},z}(\ve[\kappa])\right)
	\leq \beta_{{\sss\sum \Psi^{\iota}_\alpha,I}}^\ssc[0-1],\\
	\underline{\beta}_{ \sss \sum \Pi_{\alpha}}^\ssc[0]
	\leq &\sum_{\kappa}\Pi^{\ssc[0],\iota,\kappa}_{\alpha,z}(\ve[\iota])\leq \bar{\beta}_{\sss \sum \Pi_\alpha}^\ssc[0].
	\end{align}
For $N=0,1$, there exist
$\beta_{\sss\Xi,R}^\ssc[N]$,
$\beta_{\Delta \sss\Xi,R}^\ssc[N]$,
$\beta_{\sss\Psi,R,I}^\ssc[N]$,
$\beta_{\Delta \sss\Psi,R,I}^\ssc[N]$,
$\beta_{\sss\Psi,R,II}^\ssc[N]$,
$\beta_{\Delta \sss\Psi,R,II}^\ssc[N]\geq 0$,
such that
	\begin{align}
	\sum_{x}\Xi_{{\sss R},z}^\ssc[N](x) \leq &\beta_{\sss\Xi,R}^\ssc[N],  \qquad
	\sum_{x}\|x\|_2^2\Xi_{{\sss R},z}^\ssc[N](x) \leq \beta_{\Delta\sss\Xi,R}^\ssc[N],  \\
	\sum_{x}\Psi_{{\sss R,I},z}^{\ssc[N],\iota} (x) \leq& \beta_{\sss\Psi,R,I}^\ssc[N],  \qquad
	\sum_{x}\|x-\ve[\iota]\|_2^2\Psi_{{\sss R,I},z}^{\ssc[N],\iota} (x)\leq \beta_{\Delta\sss\Psi,R,I}^\ssc[N],
	\\
	\sum_{x}\Psi_{{\sss R,II},z}^{\ssc[N],\iota} (x)\leq& \beta_{\sss\Psi,R,II}^\ssc[N],  \qquad
	\sum_{x}\|x\|_2^2\Psi_{{\sss R,II},z}^{\ssc[N],\iota} (x)\leq \beta_{\Delta\sss\Psi,R,II}^\ssc[N].
	\end{align}
Further, there exist
$\beta_{\sss\Xi^\iota,R,I}^\ssc[0]$,
$\beta_{\Delta \sss\Xi^\iota,R,I}^\ssc[0]$,
$\beta_{\sss\Xi^\iota,R,II}^\ssc[0]$,
$\beta_{\Delta \sss\Xi^\iota,R,II}^\ssc[0]$,
$\beta_{\sss\Pi,R}^\ssc[0]$,
$\beta_{\Delta \sss\Pi,R}^\ssc[0]\geq 0$,
such that
	\begin{align}
	\sum_{x}\Xi_{{\sss R,I},z}^{\ssc[0],1} (x)\leq& \beta_{\sss\Xi^\iota,R,I}^\ssc[0],  \qquad
	\sum_{x}\|x-\ve[\iota]\|_2^2\Xi_{{\sss R,I},z}^{\ssc[0],\iota} (x+\ve[\iota])\leq \beta_{\Delta\sss\Xi^\iota,R,I}^\ssc[0],\\
	\sum_{x}\Xi_{{\sss R,II},z}^{\ssc[0],1} (x)\leq& \beta_{\sss\Xi^\iota,R,II}^\ssc[0],  \qquad
	\sum_{x}\|x\|_2^2\Xi_{{\sss R,II},z}^{\ssc[0],\iota} (x)\leq \beta_{\Delta\sss\Xi^\iota,R,II}^\ssc[0],  \\
	\sum_{x,\iota}\Pi^{\ssc[0],\iota,\kappa}_{{\sss R},z} (x)\leq& \beta_{\sss\Pi,R}^\ssc[0],  \qquad
	\sum_{x,\iota,\kappa}\|x\|_2^2\Pi^{\ssc[0],\iota,\kappa}_{{\sss R},z} (x+\ve[\iota]+\ve[\kappa])\leq
	\beta_{\Delta\sss\Pi,R}^\ssc[0].
	\lbeq{analys-assumed-displacement-xiikD}
	\end{align}
For all $\bullet \in\{ \Xi, \Xi^{\iota}, \Delta\Xi, \{\Delta \Xi^\iota,0\},\{\Delta \Xi^\iota,\iota\}\}$ and $N\in \Nbold$, $\beta_{\bullet}^{\ssc[N]}$ depends only on $\Gamma_1,\Gamma_2,\Gamma_3,d$ and on the model.  If Assumption \ref{assXBoundInitialCondition} holds, then the bounds stated above also holds for $z=z_I$ with the constants $\beta_{\bullet}$ only depending on the dimension $d$ and the model.
\end{ass}
\medskip

Only the bounds \refeq{analys-assumed-rho-Bound-two}-\refeq{analys-assumed-displacement-three-InXDiff}
are essential to perform the analysis for the NoBLE. The bounds stated in \refeq{boundOnXizero}-\refeq{analys-assumed-displacement-xiikD} are used to obtain good bounds on $\cpz,\apz,\afz,\hRfz$ and $\hRpz$ that allow us to increase the performance of the analysis and to show mean-field result in lower dimensions than otherwise possible.

We denote by $\beta_{\bullet}^{\sss \text{abs}}$, $\beta_{\bullet}^{\sss \text{odd}}$ and $\beta_{\bullet}^{\sss \text{even}}$ the sum over all (resp. odd/even) $N$ of  $\beta_{\bullet}^\ssc[N]$, i.e.,
	\begin{align}
	\beta_{\bullet}^{\sss \text{abs}}=\sum_{N=0}^{\infty} \beta_{\bullet }^\ssc[N],
	\qquad\qquad\beta_{\bullet}^{\sss \text{odd}}=\sum_{N=0}^{\infty} \beta_{\bullet }^\ssc[2N+1],
	\qquad\qquad\beta_{\bullet}^{\sss \text{even}}=\sum_{N=0}^{\infty} \beta_{\bullet }^\ssc[2N],
	\end{align}
for $\bullet \in\{ \Xi, \Xi^{\iota}, \Delta\Xi, \{\Delta \Xi^\iota,0\},\{\Delta \Xi^\iota,\iota\}\}$. By \refeq{XiAsBounds}, the values $\beta_{{\sss \Xi}}^\ssss[even],\beta_{{\sss \Xi}^{\iota}}^\ssss[even]$ and $(-\beta_{{\sss \Xi}}^\ssss[odd]),(-\beta_{{\sss \Xi}^{\iota}}^\ssss[odd])$ are explicit upper and lower bounds on $\hat \Xi_z(0)$ and $\hat \Xi^\iota_z(0)$, respectively.
By Assumption \ref{assCoefficentsRelation} they also imply bounds on $\hat\Psi^{\iota}_z(0)$ and $\hat\Pi^{\iota,\kappa}_z(0)$.

We next discuss the left-continuity of the coefficients at $z=z_c$:
\begin{ass} [Growth at the critical point]
\label{assGzBehavoirCriticaltwo}
The functions $z\mapsto \hat \Xi_z(k),z\mapsto \hat  \Xi^{\iota}_z(k),z\mapsto \hat \Psi^{\kappa}_z(k),z\mapsto \hat  \Pi^{\iota,\kappa}_z(k)$ are continuous for $z\in(0,z_c)$. Further, let $\Gamma_1,\Gamma_2,\Gamma_3\geq 0$ be such that $f_i(z)\leq \Gamma_i$ and that Assumption \ref{assDiagBoundsCoeff} holds. Then, the functions stated above are left-continuous in $z_c$ with a finite limit $z\nearrow z_c$ for all $x\in\Zd$. Further, for technical reasons, we assume that $z_c<1/2$.
\end{ass}
\medskip

In the remainder of this section we show that Assumptions \ref{assSymtwo}-\ref{assGzBehavoirCriticaltwo}
imply Assumptions \ref{assSym}-\ref{assGzBehavoirCritical}, as formulated in the following proposition:

\begin{prop}[Translation of the assumptions]
\label{assTranslation}
The assumptions stated in Section \ref{subsecAss} are implied by the assumptions stated in Section \ref{assNobleAssumptions}. More precisely,
\begin{enumerate} [(i)]
\item Assumption \ref{assSymtwo} implies Assumption \ref{assSym},
\item Assumptions \ref{assSymtwo} - \ref{assDiagBoundsCoeff} imply Assumption \ref{assDiagBounds},
\item Assumptions \ref{assCoefficentsRelation} - \ref{assGzBehavoirCriticaltwo} imply Assumption \ref{assGzBehavoirCritical}.
\end{enumerate}
\end{prop}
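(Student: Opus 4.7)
The plan is to handle the three implications (i), (ii), (iii) separately, with the bulk of the work being in (ii). Part (i) is essentially bookkeeping: each of the functions $G_z,\Rfz,\Rpz$ is built, via the NoBLE identities \refeq{basicgeneral1}--\refeq{basicgeneral2} and the rewrite formulas of Section \ref{secRewriteDerivation}, out of linear combinations and convolutions of the coefficients $\Xi_z,\Xi_z^\iota,\Psi_z^\iota,\Pi_z^{\iota,\kappa}$. Since Assumption \ref{assSymtwo} imposes total rotational symmetry on the relevant symmetric combinations of these coefficients (including the remainder parts $\Xi^{\ssc[N]}_{{\sss R},z}$, $\sum_\iota \Psi^{\ssc[N],\iota}_{{\sss R},\bullet,z}$ etc.), the same symmetry is inherited. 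The identities \refeq{assSym-interchange} follow directly by summing the interchange relations \refeq{assCoefficentsDimInterchange}.

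For part (ii), I would verify each bound of Assumption \ref{assDiagBounds} by substituting the explicit expressions for $\cpz,\cfz,\apz,\afz,\hRpz,\hRfz$ derived in \refeq{detailed-Def-cfz}--\refeq{detailed-Def-apz} and bounding term-by-term. The bound $\aabz/\aaz\le\betaaa$ is already part of \refeq{analys-assumed-rho-Bound-two}. The two-sided bound on $\cpz$ comes from \refeq{detailed-Def-cpz} via $\beta_{\sss\Xi_\alpha(0)}^\ssc[0-1],\beta_{\sss\Xi_\alpha(0)}^\ssc[1-0],\beta_{\sss\Xi^\iota_\alpha,I}^\ssc[0],\beta_{\sss\Xi^\iota_\alpha,II}^\ssc[0]$ combined with the $\aaz\le 1/(2d-1)$-type control from $f_1(z)\le\Gamma_1$; likewise $\afz$ and $\apz$ are controlled by \refeq{detailed-Def-afz}, \refeq{detailed-Def-apz} and the corresponding $\beta$'s. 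The bounds \refeq{analys-assumed-Bound-PiPsi} follow by using Assumption \ref{assCoefficentsRelation} together with the sign convention \refeq{XiAsBounds}--\refeq{XiAsBounds-2}: one gets $\sum_x\Psi^\kappa_z(x)\ge-\underline\beta_\Psi^\ssc[0]$ since higher-order terms alternate and are controlled by $\beta_{\sss\Xi^\iota}^\ssss[abs]$ via \refeq{XidominatespsiImproved}.

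The most delicate step is the control of $\hRfz,\hRpz$. Here I would use the Neumann series expansion \refeq{PriorTodeff1function}: the series converges in total variation because $\|\mPiwoz[k]\|_{\infty\to\infty}$ is bounded by $(2d-1)\aabz\sum_N\beta_{\sss\Xi^\iota}^\ssc[N]/(1-\aaz)$, which is $<1$ by the key invertibility assumption \refeq{analys-assumed-invertablecondition}. Every term of the Neumann series contributes to $\Rfz$ except the finitely many pieces already absorbed into $\cfz$ and $\afz$; summing absolute values term by term gives $\sum_x|\Rfz(x)|\le\beta_{\sss R,F}$ in terms of $\beta_{\sss\Xi^\iota}^\ssss[abs],\beta_{\sss\Psi,R,\bullet}^\ssss[abs],\beta_{\sss\Pi,R}^\ssc[0]$ and $\aaz,\aabz$, and similarly for $\beta_{\sss R,\Phi}$. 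The second-moment bound \refeq{analys-assumed-displacement-1} follows by the same series expansion, splitting the weight $\|x\|_2^2=\|\sum_i x_i\|_2^2$ over the convolution factors using \refeq{Split-weight-ineq} and invoking \refeq{analys-assumed-displacement-one-InXDiff}--\refeq{analys-assumed-displacement-three-InXDiff} together with the already-controlled zeroth moments. Finally, \refeq{analys-assumed-displacement-3} is a consequence of the upper-moment bound via Lemma \ref{lemmaFdifferenceToX}: for $\Rfz$ totally rotationally symmetric (established in (i)),
\[
|\hRfz(0)-\hRfz(k)|\le\sum_x|\Rfz(x)|[1-\cos(k\cdot x)]\le\betadeltaRfz[1-\hat D(k)],
\]
so one may take $\betadeltaRfzlow=\betadeltaRfz$. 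The accompanying positivity-type conditions $\lowaf-\betadeltaRfzlow>0$ and $\lowcp-\betaap-\beta_{\sss R,\Phi}>0$ will be numerical outputs; they are to be verified in the model-dependent Mathematica notebooks rather than proved here.

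Part (iii) follows by combining the per-$x$ left-continuity of the coefficients at $z=z_c$ from Assumption \ref{assGzBehavoirCriticaltwo} with the uniform-in-$z$ integrability of the Neumann series established in (ii): since every constant in Assumption \ref{assDiagBoundsCoeff} is independent of $z$ and the bounds hold on $[z_I,z_c)$, dominated convergence transfers them to $z=z_c$, and the explicit rational/series formulas for $\cpz,\afz,\apz,\hRfz(k),\hRpz(k),\hat G_z(k)$ are left-continuous at $z_c$ for $k\ne 0$. The main obstacle in the whole proof is keeping the bookkeeping of Neumann-series bounds tight enough so that the resulting $\beta_{\sss R,F},\beta_{\sss R,\Phi},\betadeltaRfz,\betadeltaRpz$ are small enough to satisfy the positivity conditions at the end of Assumption \ref{assDiagBounds}; this is precisely where the invertibility condition \refeq{analys-assumed-invertablecondition} and the separation into $\alpha$-parts and $R$-parts (made in Section \ref{secRewriteSplitSummary}) are essential to squeeze out a usable bound.
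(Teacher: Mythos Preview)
Your proposal is essentially correct and follows the same three-part structure as the paper. Part (i) matches exactly, and part (iii) agrees with the paper's argument (the paper makes the well-definedness of $[\mD[k]+\aaz\mJ+\mPi[k]]^{-1}$ an explicit first step there, but you have already handled it via the Neumann-series convergence in (ii)).

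For part (ii) there is one point where your route differs from the paper's. For the lower bound \refeq{analys-assumed-displacement-3} you propose the crude choice $\betadeltaRfzlow=\betadeltaRfz$ via Lemma \ref{lemmaFdifferenceToX}. This is valid, but the paper explicitly avoids it: since $[1-\cos(k\cdot x)]\ge 0$, only the negative part of $\Rfz$ contributes to a lower bound on $\hRfz(0)-\hRfz(k)$, and the paper decomposes each summand of \refeq{detailed-Def-Rfz} into its positive and negative parts (via the alternating structure \refeq{XiAsBounds}--\refeq{XiAsBounds-2}) before applying Lemma \ref{lemmaFdifferenceToX}. This yields a strictly smaller $\betadeltaRfzlow$ (cf.\ \refeq{analys-assumed-displacement-3-computed} versus \refeq{analys-assumed-displacement-2-computed}). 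The distinction is not a logical gap but it is numerically important: the positivity condition $\lowaf-\betadeltaRfzlow>0$ enters the denominator of both \refeq{conditionf2} and \refeq{infrared2-ad}, so the cruder choice may well cause the bootstrap to fail in the target dimensions.

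A minor slip: in your bound on $\sum_x\Psi^\kappa_z(x)$, the domination \refeq{XidominatespsiImproved} gives $\Psi^{\ssc[N],\kappa}_z\le\tfrac{\aabz}{\aaz}\Xi^{\ssc[N]}_z$, so the odd-$N$ contributions are controlled by $\beta_{\sss\Xi}^\ssss[odd]$ (not $\beta_{\sss\Xi^\iota}$), and the resulting lower bound is $\underline\beta_{\sss\Psi}^\ssc[0]-\betaaa\beta_{\sss\Xi}^\ssss[odd]$ rather than $-\underline\beta_{\sss\Psi}^\ssc[0]$.
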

\medskip

The proof of parts (i),(iii) are relatively straightforward and are performed in Section \ref{secRewriteProperties}.
Part (ii) is proven using a tedious, but also straightforward, application of the bounds stated in Assumption \ref{assDiagBoundsCoeff}.
We add the details of this in Appendix \ref{secRewriteBounds}.
\subsection{Properties of the rewrite}
\label{secRewriteProperties}
In this section, we prove Proposition \ref{assTranslation}(i) and (iii).

\begin{proof}[Proof of Proposition \ref{assTranslation}(i)]
In Assumption \ref{assSymtwo}, we assume that the dimensions are interchangeable (recall \refeq{assCoefficentsDimInterchange}), so that \refeq{assSym-interchange} clearly holds. Further, we also assume in Assumption \ref{assSymtwo} that the two-point function $x\mapsto G_z$ is totally rotationally symmetric.

To see that $\Rfz$ and $\Rpz$ are totally rotationally symmetric, we note that convolutions maintain symmetry and that the NoBLE-coefficients, in the way they are combined in the definition of  $\Rfz$ and $\Rpz$, are thus also totally rotationally symmetric. This completes the proof of Proposition \ref{assTranslation}(i).
\end{proof}

\begin{proof}[Proof of Proposition \ref{assTranslation}(iii)] We prove the statement in three steps:
   	\begin{enumerate}[(a)]
  	\item We prove that $[\mD[k]+\aaz \mJ+\mPi[k]]^{-1}$ is well defined for all $k$ and $z\leq z_c$;
  	\item We conclude that $z\mapsto \hat \Phi_z(k)$,  $z\mapsto \hat F_z(k)$
	are continuous at $z$ and well defined at $z_c$,
  	which implies that the bounds stated in Assumption \ref{assDiagBounds} also hold for $z=z_c$;
  	\item We show that $\hat G_z(k)$ can be continuously extended to $z=z_c$ for $k\neq 0$.
  	\end{enumerate}
This implies the desired statement that $z\mapsto \hat G_z(k)$ is left-continuous at $z=z_c$ for any $k\neq 0$ and  that the bounds stated in Assumption \ref{assDiagBounds} also hold for $z=z_c$. 

(a) We begin by showing that
	\begin{align}
  	\lbeq{technicalline-invertable-tmpa}
	\Big\|\big[\mD[k]+\aaz \mJ\big]^{-1}\mPi[k]\Big\|_\infty=\sup_{\vec v\colon \|v\|_\infty=1}
	\max_{\iota} \Big|\Big( \big[\mD[k]+\aaz \mJ\big]^{-1}\mPi[k] \vec v\Big)_\iota\Big|<1.
	\end{align}
We start by noting that $\big[\mD[k]+\aaz \mJ\big]^{-1}=\frac 1 {1-\aaz^2} \big(\mD[-k]-\aaz \mJ\big),$ so that
	\begin{align}
 	\Big( \big[\mD[k]+\aaz \mJ\big]^{-1}\mPi[k]\vec v\Big)_\iota&
 	=\frac {1}{1-\aaz^2}\sum_{\kappa} (\hat \Pi^{\iota,\kappa}_z(k)\e^{\ii k_\iota}
	-\aaz\hat \Pi^{-\iota,\kappa}_z(k)) v_\kappa.
 	\end{align}
Thus, for $\vec v$ with $\|v\|_\infty=1$,
	\begin{align}
	\Big\|\big[\mD[k]+\aaz \mJ\big]^{-1}\mPi[k]\vec v\Big\|_\infty
	&\leq\frac {1+\aaz}{1-\aaz^2} \|v\|_\infty \sum_{N,\kappa,x}\Pi^{\ssc[N]\iota,\kappa}(x)\nn \\
	&\stackrel{\refeq{XidominatespsiImproved},\refeq{analys-assumed-BoundXi} }
	\leq
	\frac {2d \aaz}{1-\aaz} \beta_{\sss \Xi^\iota}^{\sss \text{abs}}
	\stackrel{\refeq{analys-assumed-invertablecondition} }<1,
	\end{align}
which proves \refeq{technicalline-invertable-tmpa}. From \refeq{technicalline-invertable-tmpa}, it follows that the matrix $\mI+\left[\mD[k]+\aaz \mJ\right]^{-1}\mPi[k]$ is invertible. Then, we use standard linear algebra to compute
	\begin{align}
	&\left[ \mI+\big[\mD[k]+\aaz \mJ\big]^{-1}\mPi[k]\right]^{-1}
	\big[\mD[k]+\aaz \mJ\big]^{-1}	\big[\mD[k]+\aaz \mJ+\mPi[k]\big]\\
	&\qquad=\left[ \mI+\big[\mD[k]+\aaz \mJ\big]^{-1}\mPi[k]\right]^{-1}
	\left[\mI+\big[\mD[k]+\aaz \mJ\big]^{-1}\mPi[k]\right]=\mI,\nn
	\end{align}
which implies that the matrix $\mD[k]+\aaz \mJ+\mPi[k]$ is invertible.

(b) By Assumption \ref{assGzBehavoirCriticaltwo}, we know that the NoBLE coefficients are continuous in $z$.
This also implies that $\mD[k]+\aaz \mJ+\mPi[k]$ is continuous in $z$ and, as it is well defined by (a), its inverse is also continuous.  Further, we note that the bounds on the coefficients $\beta_{\bullet}$ are independent of the value of $z\in(z_I,z_c)$ and the coefficients are left-continuous in $z=z_c$.
Reviewing the definition of $\hat \Phi_z(k)$ and $\hat F_z(k)$ in \refeq{DefPhi}-\refeq{DefFFunction}, we conclude that these functions are continuous in $z\in[z_I,z_c)$ and left-continuous at $z=z_c$.

(c) The dominated convergence theorem implies that
	\begin{align}
	G_{z_c}(x)=& \lim_{z\nearrow z_c}G_z (x)
	= \lim_{z\nearrow z_c} \int_{(-\pi,\pi)^d}\frac {\hat \Phi_z(k)}{1-\hat F_z(k)}
	\e^{\ii k\cdot x}\frac {d^dk}{(2\pi)^d}\nnb
	=&\int_{(-\pi,\pi)^d}\frac {\hat \Phi_{z_c}(k)}{1-\hat F_{z_c}(k)}\e^{\ii k\cdot x}\frac {d^dk}{(2\pi)^d},
	\lbeq{Gzc-continuation}
	\end{align}
where we use the left-continuity of $z\mapsto \hat \Phi_z(k)$ and $z\mapsto \hat F_z(k)$ at $z=z_c$ proved above, and we further
note that $\lowaf-\betadeltaRfzlow>0$ and $\lowcp-\betaap-\beta_{{\sss R,\Phi}}>0$ together with \refeq{generalForm-simple} imply the infrared bound holds uniformly in $[z_I,z_c)$ for
$\hat \Phi_z(k)/[1-\hat F_z(k)]$, so that the integral in \refeq{Gzc-continuation} is well defined.
Thus, we still have the Fourier representation \refeq{generalForm}, with the understanding that $\hat G_{z_c}(k)$ for $k\neq 0$ is defined by
$\hat G_{z_c}(k)=\hat \Phi_{z_c}(k)/(1-\hat F_{z_c}(k))$.
Since $\hat F_{z_c}(0)=1$, this characterization can not be used for $k=0$.
This completes the proof of Proposition \ref{assTranslation}(iii).
\end{proof}

\section{Numerical bounds}
\label{secNumerics}
In this section we discuss the ideas underlying the numerical computation of our bounds on the NoBLE coefficients. These ideas are model independent, while the implementation itself is not.  We first explain how we compute the numerical bounds on the SRW-integrals that we have used for the improvement of bounds in Section \ref{secVeriBootstrap} and to obtain numerical bounds on the coefficients. Then, we explain how the bootstrap functions are used to bound simple diagrams.  At the end of this section, we explain how we compute the $\beta^{\text{abs}}_{\bullet}$ as sums over $\beta^{\ssc[N]}_{\bullet}$,  which is not straightforward as the bounds on the NoBLE coefficients are stated in the form of matrix-products.

\subsection{Simple random walk integrals}
\label{secNumericsSRW}
We bound the SRW-integrals $I_{n,l}, K_{n,l}, T_{n,l}, U_{n,l}$ defined in \refeq{Analysis-Def-Integral-I}-\refeq{Analysis-Def-Integral-U}. We first compute $I_{n,m}(x)$ and then show that the other integrals can be bounded in terms of it.
We compute $I_{n,m}(x)$ using
	\begin{eqnarray}
	\lbeq{Inmrec}
	I_{n,m}(x)=I_{n,m-1}(x) - I_{n-1,m-1}(x),
	\end{eqnarray}
which is obtained by writing $\hat D(k)=1-[1-\hat D(k)]$ in \refeq{defInlPerview}. Using \refeq{Inmrec}, the problem of computing $I_{n,m}$ for general $n,m\in\Nbold$ simplifies to the computation of $I_{n,0}$ and $I_{0,m}$ for all $n,m\in\Nbold$.

\subsubsection{Computation of the Green's function}
We compute $I_{n,0}$ in the same way as Hara and Slade in \cite[Appendix B]{HarSla92b}, as we explain now. Let $b(n,s)$ be the modified Bessel function of the first kind and $F(t,d,n)$ the modified Bessel function, i.e.,
	\eqan{
	b(n,s)&=\sum_{k=0}^\infty (-1)^k \left(\frac {s} 2\right)^{2k+n} \frac {1}{k! \Gamma(n+k+1)!},
	\qquad
	F(t,d,n) = \e^{-t/d} b(n,t/d),
	}
see e.g.\ \cite[(8.401) and (8.406)]{GradKyz94} or \cite[Section 9.6] {AbrSte92}. Using
	\begin{eqnarray}
	\lbeq{NumericOneDrelation}
	\frac {1} {[1-\hat D(k)]^n}=\frac 1 {(n-1)!} \int_{0}^\infty t^{n-1}
	\e^{-t[1-\hat D(k)]} dt,
	\end{eqnarray}
we compute
	\begin{eqnarray}
	I_{n,0}(x)&=& \frac 1 {(n-1)!} \int_{0}^\infty t^{n-1} \prod_{\mu=1}^d F(t,d,|x_\mu|) dt,
	\end{eqnarray}
see \cite[Appendix B]{HarSla92b}. Most mathematical software packages, such as Mathematica, Matlab, and R, come with a method to compute the modified Bessel Integral. We have used Mathematica which allows to control the precision of the computation.  With the built-in function we compute $I_{4,0}(x)$ in $d\geq 15$ and  $I_{5,0}(x)$ in $d\geq 18$ up to a precision of $10^{-20}$. To be able to compute these basic SRW-integrals in lower dimensions, we implement the algorithm given in \cite[Appendix B]{HarSla92b},  where also a rigorous bound on the error is proven. This algorithm is based on a Taylor approximation of the Bessel function.

\subsubsection{Computation of the random walk transition probability}
The computation of $I_{0,m}(x)$ is a purely combinatorial problem as $(2d)^m I_{0,m}(x)=p_m(x)$, where $p_m(x)$ is the number of $m$-step SRWs with $\omega_0=0,\omega_m=x$. The value of $p_n(x)$ can be obtained by simple combinatorial means. As an example, we explain the computation of $p_6(0)$.

When $\omega_0=\omega_6=0,$ the walk uses at most three different dimensions as it needs to undo all its
steps. In the following, we distinguish between the number of dimensions used by the
walker:
\begin{itemize}
\item[$\rhd$] When the walk only uses one dimension, it steps three times to the positive direction (right) and three times to the negative direction (left). As any combination of left and right steps is allowed there are $6!/(3!3!)$ different possibilities for that. As there are $d$ choices for the dimension used, there are $d\frac {6!} {3!3!}$ such walks.

\item[$\rhd$] When the walk uses two dimensions, it makes $4$ steps in one dimension and $2$ in the other. As any combination of moves is allowed, there are $6!/(2!2!1!1!)$ different possibilities for that.
Further, there are $d$ choices for the dimension in which to take $4$ steps, and likewise $d-1$ choices for the dimension where $2$ steps are made. Thus, there are $d(d-1) \frac {6!} {2!2!}$ SRW $6$-step loops using steps in exactly two dimensions.

\item[$\rhd$] When the walk uses three dimensions, then there are $2$ steps in each dimension. There are $6!$ different orders for these $6$ steps (including the back and forth steps in each of the three dimensions). Further, we have to choose $3$ out of the $d$ dimensions (without repetition). This gives a factor
$\frac{d(d-1)(d-2)}{3!}6!$.
\end{itemize}
This means that
	\begin{eqnarray}
	p_6(0)&=& d { 6 \choose 3,3} + d(d-1) { 6 \choose 2,2,1,1}
	+ \frac{d(d-1)(d-2)}{3!} { 6 \choose 1,1,1,1,1,1},
	\end{eqnarray}
where  the multinomial coefficient is defined as
	\begin{eqnarray}
	{ m \choose k_1,k_2,\dots,k_r}=\frac {m!}{k_1!k_2!\dots,k_r!}.
	\end{eqnarray}
For our analysis, we use the values of $p_n(x)$ for $n\in\{0,\dots, 20\}$ for about $24$ different values of
$x$. We have implemented a program for this, and the algorithm can be found in the accompanying Mathematica notebooks (see also Section \ref{sec-Comp}, where the Mathematica notebooks are described in more detail).

\subsection{Bounds on related SRW-integrals}
\label{secNumericsAdvanced}
In this section,  we show how to bound the integrals defined in \refeq{Analysis-Def-Integral-K}-\refeq{Analysis-Def-Integral-U}. This section is an adaption of \cite[Appendix B.1]{HarSla92b} by Hara and Slade, who computed numerical bounds on these integrals to prove mean-field behaviour for nearest-neighbour SAW in $d\geq 5$.

\paragraph{Bound in terms of $I_{n,l}, L_{n}, V_{n}$}
We first show how we bound the integrals defining $K_{n,l}(x),$ $U_{n,l}(x)$ and $T_{n,l}$ in terms of  $I_{n,l}$, as well as the related integrals $L_n(x)$ and $V_{n,l}$ defined by
	\begin{align}
	\lbeq{Analysis-Def-Integral-L}
	L_n(x)&=\int_{(-\pi,\pi)^d}\hat C(k)^n\hat D^{(x)}(k)^2\frac {d^dk}{(2\pi)^d}
	\end{align}
and
	\begin{align}
	\lbeq{Analysis-Def-Integral-V}
	V_{n,l}&=\int_{(-\pi,\pi)^d}\frac { \hat D(k)^l[\hat D^{\sin}(k)]^2}{[1-\hat D(k)]^n}\frac {d^dk}{(2\pi)^d}.
	\end{align}
We use the Cauchy-Schwarz inequality to bound $K_{n,l}(x)$ and $U_{n,l}(x)$ defined in \refeq{Analysis-Def-Integral-K} and  \refeq{Analysis-Def-Integral-U} by
	\begin{align}
	K_{n,l}(x)\leq& [I_{n,2l}(0)L_n(x)]^{1/2},\qquad\qquad
	U_{n,l}(x)\leq [V_{n,2l}L_n(x)]^{1/2}.
	\end{align}
To bound $T_{n,l}$ defined in \refeq{Analysis-Def-Integral-T}, we use \refeq{Dsinabsolute-bound} and $|\hat D^{\sin }(k)|\leq 1/d$, respectively, to compute
	\begin{eqnarray}
	|\hat M(k)|\leq |\hat D(k)|+2|\hat D^{\sin}(k)|\hat C(k)
	\leq  |\hat D(k)|+\min\left\{\frac 4 d,\frac 2 d\hat C(k)\right\}.
	\end{eqnarray}
This leads to
	\begin{eqnarray}
	T_{n,l}(x)&\leq&\int_{(-\pi,\pi)^d}|\hat D^{l} (k)| \hat C(k)^n |\hat M(k)||\hat D^{(x)}(k)|
	\frac {d^dk}{(2\pi)^d}\nnb
	&\leq& K_{n,l+1}(x)+\min\left \{ \frac 4 d K_{n,l}(x),\frac 2 d K_{n+1,l}(x)\right\}.
	\end{eqnarray}
Next, we discuss improvements for the bounds on $K_{n,l}(x)$ and $U_{n,l}$. As $|\hat D^{\sin}(k)|\leq 1/d$ (recall \refeq{Dsinabsolute-bound}), we know that
	\begin{align}
	U_{n,l}(x)\leq&\frac 1 d K_{n,l}(x).
	\end{align}
For $x=0$ and even $l$, we use $\hat D^{\sss (0)}(k)=1$, $\hat D^{\sin}(k)\geq 0$ and \refeq{Dsin-Split} to compute
	\begin{align}
	U_{n,l}(0)= \frac 1 {2d} \left( I_{n,l}(0) - I_{n,l}(2\ve[1])\right).
	\end{align}
For $x=0$ we can use a better bound for $K_{n,l}$ in the form
	\begin{align}
	K_{n,l}(0)&\begin{cases}=I_{n,l}(0)&\text{ if $l$ is even},\\
	\leq I_{n,l-1}(0)^{1/2} I_{n,l+1}(0)^{1/2}&\text{ if $l$ is odd}.\end{cases}
	\end{align}
Moreover, we use a different bound for $l=0$. We note that, for $n\geq 1$,
	\begin{eqnarray}
	\frac 1 {[1-\hat D(k)]^n}=\frac 1 {[1-\hat D(k)]^{n-1}}
	+\frac {\hat D(k)} {[1-\hat D(k)]^{n-1}}+\frac {\hat D(k)^2} {[1-\hat D(k)]^n},
	\end{eqnarray}
which implies that $K_{n,l}(x)\leq K_{n-1,l}(x)+K_{n-1,l+1}(x)+K_{n,l+2}(x)$, and thus
	\begin{align}
	K_{n,0}(x)\leq& K_{n-1,0}(x)+[I_{n-1,2}(0)L_{n-1}(x)]^{1/2}+[I_{n,4}(0)L_n(x)]^{1/2}.
	\end{align}

\paragraph{Computation of $L_{n}$.}
By the definition of $\hdx$ in \refeq{Analysis-dhx}, it is not difficult to see that
	\begin{align}
	L_n(x)=&\int_{(-\pi,\pi)^d}\frac { (\hdx)^2}{[1-\hat D(k)]^n} \frac {d^dk}{(2\pi)^d}
	\lbeq{LN-sumcharacterization-pre}
	=\frac 1 {2^d d!} \sum_{\mu\in \mathcal{P}_d }\sum_{\delta\in\{-1,1\}^d}  I_{n,0}(x+ p(x;\nu,\delta)).
	\end{align}
The set $\mathcal{P}_d$ and the operator $p(x;\nu,\delta)$ are defined in Definition \ref{defSignedPermuations}. As we can compute $I_{n,0}(x)$, we can also compute the sum in \refeq{LN-sumcharacterization-pre} directly.

The value of $I_{n,0}(x)$ only depends on the number of entries that have a given absolute value,
so that we can reduce the domain over which we sum. We explain this in two examples:
\bigskip

\noindent
{\bf Example 1: Computation of $L_n(\ve[1])$.} As the first example, we show that
	\begin{align}
	\lbeq{numericLnNeighbo}
	L_n(\ve[1])=& \frac 1 {2d} I_{n,0}(0)+\frac 1 {2d} I_{n,0}(2\ve[1])+\frac {d-1}{d} I_{n,0}(\ve[1]+\ve[2]).
	\end{align}
By symmetry, $I_{n,0}(\ve[1]+ p(\ve[1];\nu,\delta))=I_{n,0}(\ve[1]+\ve[2])$ for all $\delta\in\{-1,1\}^d$ and $\nu\in \mathcal{P}_d$ with $\nu_1\neq 1$. This explains the third summand of \refeq{numericLnNeighbo}, where we note that there are $(d-1)!(d-1)$ permutations $\nu$ with $\nu_1\neq 1$. That leaves $(d-1)!$ permutations $\nu$ with $\nu_1=1$. As all entries of $p(\ve[1];\nu,\delta)$ except the first one are zero,  the values $\delta_{2},\dots,\delta_{d}$ do not affect the summand. If $\delta_1=1$, then $\ve[1]+ p(\ve[1];\nu,\delta)=2\ve[1]$ and if $\delta_1=-1$, then $\ve[1]+ p(\ve[1];\nu,\delta)=0$. The two cases correspond to the first and second term in \refeq{numericLnNeighbo} and complete the proof of \refeq{numericLnNeighbo}.
\bigskip

\noindent
{\bf Example 2: Computation of $L_n(\ve[1]+\ve[2])$.} As the second example, we derive that
	\begin{align}
	L_n(\ve[1]+\ve[2])=& \frac {(d-2)(d-3)}{d(d-1)} I_{n,0}(\ve[1]+\ve[2]+\ve[3]+\ve[4])\nnb
	&+\frac {d-2}{2d(d-1)}\left(I_{n,0}(\ve[1]+\ve[2])+I_{n,0}(2\ve[1]+\ve[2]+\ve[3])\right)\nnb
	\lbeq{numericLnNeighboNeighbo}
	&+\frac {1}{4d(d-1)}\left(I_{n,0}(0)+I_{n,0}(2\ve[1]+2\ve[2])+2I_{n,0}(2\ve[1])\right).
	\end{align}
There are $2(d-2)!$ permutations $\nu$ with $\{\nu_1,\nu_2\}=\{1,2\}$. Further, there are $2(d-2)!(2d-2)$ permutation $\nu$ that map $1$ to $\{3,\dots, d\}$ and $2$ to $\{1,2\}$.
	That leaves
	\begin{align}
	d!-2(d-2)!-4(d-2)!(2d-2)=(d-2)!(d-2)(d-3)
	\end{align}
permutations $\nu$ that do not map $1$ and $2$ to the first coordinates, i.e.,  $\nu$ for which $\{\nu_1,\nu_2\}\cap\{1,2\}=\varnothing$. For these $\nu$,
	\begin{align}
	I_{n,0}(\ve[1]+\ve[2]+ p(\ve[1];\nu,\delta)) = I_{n,0}(\ve[1]+\ve[2]+\ve[3]+\ve[4]),
	\end{align}
which yields the first summand of \refeq{numericLnNeighboNeighbo}. The second corresponds to the case that either $\ve[1]$ or $\ve[2]$ is mapped to one of the first two coordinates.  For example, let us assume $\nu_1=1$ and $\nu_2=3$, then
	\begin{align}
	\ve[1]+\ve[2]+  p(\ve[1];\nu,\delta)\in\{2\ve[1]\pm\ve[2]\pm \ve[3],\pm \ve[2]\pm \ve[3]\},
	\end{align}
depending on the sign of $\delta_1$. This gives the second summand of \refeq{numericLnNeighboNeighbo}.
If we map $\ve[1]+\ve[2]$ to both of the first two coordinates, then
	\begin{align}
	\ve[1]+\ve[2]+  p(\ve[1];\nu,\delta)\in\{0,2\ve[1],2\ve[2],2\ve[1]+2\ve[2]\},
	\end{align}
which only depends on $\delta_1$ and $\delta_2$. This gives the third summand of \refeq{numericLnNeighboNeighbo} and completes the proof of \refeq{numericLnNeighboNeighbo}.

\paragraph{Computation of $V_{n,l}$.}
The equality \refeq{Dsin-Split} implies that
	\begin{align}
	\hat D^{\sin}(k)^2&=\frac 1 {(2d)^2} [1 -\hat D(2k)]^2
	=\frac 1 {(2d)^2} \Big[1 -\frac {2}{2d}\sum_{\iota} \e^{2\ii  k_\iota }
	+\frac {1}{(2d)^2}\sum_{\iota,\kappa} \e^{2\ii (k_\iota+k_\kappa) }\Big].
	\end{align}
From this, we conclude that
	\begin{align}
	V_{n,l}&=\int_{(-\pi,\pi)^d}\frac { \hat D(k)^l[\hat D^{\sin}(k)]^2}{[1-\hat D(k)]^n}
	\frac {d^dk}{(2\pi)^d}\\
	&=\frac 1 {(2d)^2}\Big(I_{n,l}(0)-2I_{n,l}(2\ve[1])+\frac {d-1}{d}I_{n,l}(2\ve[1]+2\ve[2])
	+\frac 1 {2d} I_{n,l}(0)+\frac 1 {2d} I_{n,l}(4\ve[1]) \Big).\nn
	\end{align}

\paragraph{Bounds on the suprema of $I_{n,l}(x)$, $K_{n,l}(x)$, $T_{n,l}(x)$, $U_{n,l}(x)$.}
In Section \ref{secf3}, we have bounded $\Hcal^{n,l}_z(x)$ in terms of SRW-integrals.
To compute the bound on $f_3$, we need to rely on
	\begin{align}
	\lbeq{numericalSupSubList}
	\sup_{x\in S} I_{n,l}(x),\quad \sup_{x\in S} K_{n,l}(x),\quad \sup_{x\in S} T_{n,l}(x),\quad \sup_{x\in S}
	U_{n,l}(x),
	\end{align}
for different sets of vertices $S$. For finite sets, we simply take the maximum of the elements. To obtain  bounds for infinite $S$, we use monotonicity of the SRW-integrals as formulated in the following lemma:

\begin{lemma}[Monotonicity of $I_{n,l}(x)$ and $L_{n,l}(x)$ in $x$]
\label{NumericLemmaMonotonIL}
Let $n$ be a positive integer and consider $x,y\in\Zd$ with $x_1\geq x_2\geq\dots\geq x_d\geq 0$ and $y_1\geq y_2\geq\dots\geq y_d\geq 0$. Then,
	\begin{align}
	I_{n,l}(x+y)&\leq I_{n,l}(x),\qquad
	\qquad L_{n}(x+y)\leq L_{n}(x).
	\end{align}
\end{lemma}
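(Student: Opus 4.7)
My plan is to reduce both monotonicity statements to the elementary fact that the one-dimensional SRW counting function $q_m(n):=\binom{m}{(m+n)/2}$ (with the convention that $q_m(n)=0$ when $m-|n|$ is odd or negative) is non-increasing in $|n|$. First, expanding $\hat C(k)^n=[1-\hat D(k)]^{-n}$ as a power series in $\hat D(k)$ yields
\[
I_{n,l}(x)=\sum_{m=0}^{\infty}\binom{n+m-1}{m}\frac{p_{l+m}(x)}{(2d)^{l+m}},
\]
where $p_k(x)$ counts the $k$-step nearest-neighbour walks from $0$ to $x$. Conditioning on the number of steps $k_\mu$ taken along each coordinate axis gives the product decomposition
\[
p_k(x)=\sum_{k_1+\cdots+k_d=k}\binom{k}{k_1,\ldots,k_d}\prod_{\mu=1}^d q_{k_\mu}(x_\mu),
\]
so the coordinatewise monotonicity of $q$ implies $p_k(x+y)\leq p_k(x)$ whenever $x,y\in\Z^d$ have non-negative entries. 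Summing over $m$ with the non-negative weights $\binom{n+m-1}{m}(2d)^{-(l+m)}$ yields $I_{n,l}(x+y)\leq I_{n,l}(x)$; in fact this half of the lemma does not require the sorted hypothesis.

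For $L_n$ I would use the symmetrised representation \refeq{LN-sumcharacterization-pre}, which expresses $L_n(x)$ as a non-negative average of $I_{n,0}(x+p(x;\nu,\delta))$ over signed permutations $(\nu,\delta)$. Writing $a=x+p(x;\nu,\delta)$ and $b=y+p(y;\nu,\delta)$ gives the identity $(x+y)+p(x+y;\nu,\delta)=a+b$. The first paragraph, combined with the total rotational symmetry of $I_{n,0}$, shows that $I_{n,0}$ is non-increasing in each $|x_\mu|$ separately; hence $I_{n,0}(a+b)\leq I_{n,0}(a)$ provided that the entrywise comparison $|a_\mu+b_\mu|\geq|a_\mu|$ holds for every $\mu$. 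Averaging this pointwise bound over all $(\nu,\delta)$ then yields $L_n(x+y)\leq L_n(x)$.

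The main obstacle, and the unique point at which the sorted hypothesis is used, is the verification of $|a_\mu+b_\mu|\geq|a_\mu|$ coordinate by coordinate. When $\delta_\mu=+1$, both $a_\mu=x_\mu+x_{\nu_\mu}$ and $b_\mu=y_\mu+y_{\nu_\mu}$ are non-negative, so the bound is trivial. When $\delta_\mu=-1$, one has $a_\mu=x_\mu-x_{\nu_\mu}$ and $b_\mu=y_\mu-y_{\nu_\mu}$; because $x$ and $y$ share the same decreasing ordering, these two differences have the same sign (non-negative if $\mu\leq\nu_\mu$ and non-positive otherwise), so no cancellation can occur and $|a_\mu+b_\mu|=|a_\mu|+|b_\mu|\geq|a_\mu|$. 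Without a common ordering of $x$ and $y$ the two differences could have opposite signs, partially cancel, and destroy the coordinatewise comparison; this sign-matching is exactly why the sorted hypothesis is the right one to impose.
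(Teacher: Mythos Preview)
Your argument is correct. The paper does not actually prove this lemma; it simply cites \cite[Lemmas~B.3 and~B.4]{HarSla92b}, and your write-up essentially reconstructs those arguments: the random-walk expansion and coordinatewise monotonicity of $q_m(|n|)$ for $I_{n,l}$, and the sign-matching over signed permutations for $L_n$. Your observation that the sorted hypothesis is only needed for the $L_n$ part, precisely to guarantee that $x_\mu-x_{\nu_\mu}$ and $y_\mu-y_{\nu_\mu}$ share a sign, is exactly the point of the Hara--Slade proof.
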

\noindent

This lemma is a combination of \cite[Lemmas B.3 and B.4]{HarSla92b}.
In the previous section, we have obtained bounds on $K_{n,l}(x), T_{n,l}(x), U_{n,l}(x)$ for a given $x$ in terms of  $I_{n,l}(x), L_{n}(x)$ and $V_{n,l}$. Thus, the supremum in \refeq{numericalSupSubList}  is obtained for an $x$
with the lowest order in the sense of Lemma \ref{NumericLemmaMonotonIL}. Therefore, when we use the following infinite set $Q=\{ x \in \Zd\colon \sum_i |x_i|>2\}$, we bound the corresponding SRW-integrals by
\begin{align}
\sup_{x\in Q} I_{n,l}(x)&=\max\{I_{n,l}(3\ve[1]), I_{n,l}(2\ve[1]+\ve[2]), I_{n,l}(\ve[1]+\ve[2]+\ve[3])\},\\
\sup_{x\in Q} L_{n}(x)&=\max\{L_{n,l}(3\ve[1]), L_{n,l}(2\ve[1]+\ve[2]), L_{n,l}(\ve[1]+\ve[2]+\ve[3])\}.
\end{align}


\subsection{Bounds implied by the bootstrap functions and SRW integrals}
\label{secImpliedBounds}
For the analysis presented in the previous sections, we use that bounds on the
bootstrap functions $f_1$, $f_2$ and $f_3$ imply bounds on the NoBLE coefficients, see e.g., Assumptions \ref{assDiagBounds} \& \ref{assDiagBoundsCoeff}. In the model-dependent papers, we prove that the coefficients can be bounded by combinations of simple diagrams. Simple diagrams are diagrams arising from combinations of two-point functions, like the triangle diagram that we have already seen for percolation:
	\begin{align}
	\bigtriangledown_p(x)=(G_{z}\star G_{z}\star G_{z})(x).
	\end{align}
The simple diagrams can then be bounded using the bootstrap function $f_2$, e.g.,
	\begin{align}
	\bigtriangledown_p(x)
	&=\int_{(-\pi,\pi)^d}\hat G_z(k)^3 \e^{\ii k \cdot x}\frac{d^dk}{(2\pi)^d}\\
	&\leq \left(\frac{2d-2}{2d-1}\Gamma_2 \right)^3 \int_{(-\pi,\pi)^d}\hat C(k)^3
	\frac {d^dk}{(2\pi)^d}
	=\left(\frac{2d-2}{2d-1}\Gamma_2 \right)^3 I_{3,0}(0),\nn
	\end{align}
which can be computed numerically. In this section, we present the bounds that we use in our implementation of the analysis. These bounds are optimized so as to obtain the best numerical bounds on $\beta$ possible for Assumption \ref{assDiagBoundsCoeff}.

\subsubsection{Simple and repulsive diagrams}
In this section, we explain how to bound simple diagrams and {\em repulsive} diagrams, that we define below.

The bounds on the simple diagrams are model dependent.  However, for all models that we consider, the bounds are using the same idea that we present below. We require some notions that have not been introduced yet, namely, the minimal length of an interaction, the modified two-point function and repulsive diagrams. As an example, we give the definitions for percolation. The definitions for LT and LA are straightforward generalizations.

For percolation, we define $\{x \ct {m} y\}$ to be the event that there exists a path consisting of at least $m$ open bonds connecting $x$ and $y$. Similarly, we define $\{x \ct {\underline m} y\}$ as the event that there exists a path consisting of exactly $m$ open bonds connecting $x$ and $y$. [This is not the same as the event that the graph distance in the percolation cluster equals $m$, but it implies that it is at most $m$.]
To characterize these interactions, we define the adapted two-point function,  which are given for percolation by
	\begin{align}
	G_{m,z}(x)=\prob_z \left( x \ct {m} y \right), \qquad
	G_{\underline m,z}(x)=\prob_z \left( x \ct {\underline m} y \right).
	\end{align}
For all models under consideration the following holds, for all $n \in \Nbold$,
	\begin{align}
 	\lbeq{Additional-Bound-Probs-1}
	G_{\underline m,z}(x) &\leq (2d\aabz)^m c_m(x),
	\end{align}
where $c_m(x)$ is the number of $m$-step self-avoiding walks starting at the origin and ending at $x$.
A diagram is {\em repulsive} if the paths involved do not intersect. For percolation, for example, the repulsive bubble and triangle are given by
	\begin{align}
	\diagRepulsiveLetter{B}_{\underline m_1,m_2} (x)
	=&\sum_{y\in\Zd} \prob_z\left( \{ 0\ct{\underline m_1} y\}\circ\{ y\ct {m_2} x\}\right),\\
	\diagRepulsiveLetter{T}_{ m_1,m_2,m_3} (x)
	=&\sum_{v,y\in\Zd} \prob_z\left( \{ 0\ct {m_1}  v\}\circ\{ v\ct {m_2}  y\}\circ\{ y\ct {m_3} x\}\right),
	\end{align}
where the symbol $\circ$ denotes the disjoint occurrence, which is a standard notion in percolation theory, see e.g., \cite{Grim99}. For the examples above, it means that the occupied paths that are required to exist make use of disjoint sets of bonds.

Below, we use the symbol $(f\otimes g)(x)$ to describe that the paths involved for $f$ are disjoint from the path involved in $g$. For example, $(G_{n,z}\otimes G_{m,z})(x)$ represents the diagram where the path used in $G_{n,z}$ is disjoint from that used in $G_{m,z}$. We define $a_n(x)$ to be the number of $n$-step simple random walk from $0$ to $x$ that never use a bond twice, and note that $(a_{n}\otimes a_{m})(x)\leq a_{n+m}(x)$.
Further, $(a_n\otimes G_{m,z})(x)$ represents the combination of an $n$-step SRW path counted in $a_n$ and a percolation path of length at most $m$, where, given the $n$-step SRW path, $G_{m,z}(x)$ is the probability that an occupied percolation path exists that uses different bonds than the $n$-step SRW. The bounds that we explain below rely on the following bounds on the two-point function that hold for all models that we consider:
	\begin{align}
	\lbeq{Additional-Bound-Probs-2}
	G_{m,z}(x)&\leq \diagRepulsiveLetter{B}_{\underline m,0}  \leq (2d\aabz)^m (a_m\otimes
	G_z)(x),\\
 	\lbeq{Additional-Bound-Probs-3}
 	G_{m,z}(x) &\leq \sum_{i=m}^{s-1} G_{\underline i,z}(x)+G_{s,z}(x)\leq
	\sum_{i=m}^{s-1} (2d\aabz)^i a_i(x)+(2d\aabz)^s (a_s\otimes G_z)(x),\\
  	\lbeq{Additional-Bound-Probs-4}
 	\diagRepulsiveLetter{B}_{n,m}(x) &\leq  (2d\aabz)^n   (a_{n}\otimes G_{m,z})(x)
	+ \diagRepulsiveLetter{B}_{n+1,m}(x),
	\end{align}
for $s,n,m\in\Nbold$ with $s>m$.

\subsubsection{Bounds on simple diagrams}
Here we derive efficient numerical bounds on simple diagrams. Throughout this section, we fix $n\in\Nbold$ and $m_i\in\Nbold$ for $i=1,\dots, n$. Further, we use the notation $m_{i,j}=\sum_{s=i}^j m_s$.

Assuming bounds on the bootstrap functions, we obtain the following bound for non-repulsive diagrams from \refeq{Additional-Bound-Probs-1}:
	\begin{align}
	(G_{m_1} \star G_{m_2}\star \cdots \star G_{m_n})&\leq
	(2d\aabz)^{m_{1,n}} ( D^{\star m_{1,n}} \star G^{\star n})(x)\nnb
	&\leq \left(\frac {2d}{2d-1}\Gamma_1\right)^{m_{1,n}}
	\left( \frac {2d-2}{2d-1}\Gamma_2\right)^n
	K_{n,m_{1,n}}(x).
	\end{align}
When $m_{1,n}\geq 10$, we also use this bound also for the repulsive diagrams.  For $m_{1,n}<10$, we instead use the repulsiveness to reduce the numerical bounds in most cases by around $50\%$. We obtain these improved bound by extracting short, explicit contributions. In high dimensions, short connections typically give the leading contribution to diagrams, so treating them more precisely often pays off. This
requires the computation of $a_n(x)$, for all $x\in\Zd$ and $n\in\Nbold$. For $n<10$, we compute these values using a simple Java program that can be downloaded from the website of the first author \cite{FitNoblePage}.

We start by explaining the bound on the repulsive bound for the example of a bubble.
We fix an $M\in\Nbold$ with $M \geq m_1+m_2$ and use \refeq{Additional-Bound-Probs-4} to obtain
	\begin{align}
	\lbeq{Numeric-Bubble-Advanced}
 	\diagRepulsiveLetter{B}_{m_1,m_2}(x)
	\leq &\sum_{i=m_1}^{M-m_2-1} (2d \aabz)^i (a_{i}\otimes G_{z,m_2})(x)
	+ \diagRepulsiveLetter{B}_{M-m_2,m_2}(x) \\
 	\leq& \sum_{s_1=m_1}^{M-m_2-1}\Big(\Big[\sum_{s_2=m_2}^{M-1-s_1}
	a_{s_1+s_2}(x) \aabz^{s_1+s_2}\Big]
 	+ (2d\aabz)^M (D^{\star M}\star G_{z})(x)\Big)\nnb
	&+(2d\aabz)^M (D^{\star M}\star G^{\star 2}_{z})(x)\nnb
	=&\sum_{i=m_{1,2}}^{M-1} (i+1-m_{1,2}) a_{i}(x) \aabz^{i} \nnb
	&+(M-m_{1,2})(2d\aabz)^M (D^{\star M}\star G_{z})(x)
	+(2d\aabz)^M (D^{\star M}\star G^{\star 2}_{z})(x).\nn
	\end{align}
We extend this idea to obtain a bound on the triangle of the form
	\begin{align}
	\lbeq{Numeric-Triangle-Advanced}
 	\diagRepulsiveLetter{T}_{m_1,m_2,m_3}(x)
	\leq&  \sum_{i=m_{1,3}}^{M-1} a_{i}(x) \aabz^{i} \sum_{s_1=m_1}^{i-m_{2,3}}
	\sum_{s_2=m_{2}}^{i-m_3-s_1} 1 \\
	&+ \sum_{s=m_1}^{M-m_{2,3}-1} (M-m_{2,3}-s)(2d\aabz)^M (D^{\star M}\star G_{z})(x)\nnb
	&+(M-m_{1,3})(2d\aabz)^M (D^{\star M}\star G^{\star 2}_{z})(x)
	+(2d\aabz)^M (D^{\star M}\star G^{\star 3}_{z})(x)\nnb
	=& \sum_{i=m_{1,3}}^{M-1} \frac {(i+1-m_{1,3})(i+2-m_{1,3})}{2}a_{i}(x) \aabz^{i}\nnb
	&+\frac {(M-m_{1,3})(M-1-m_{1,3})}{2}(2d\aabz)^M (D^{\star M}\star G_{z})(x) \nnb \nn
	&+(M-m_{1,3})(2d\aabz)^M (D^{\star M}\star G^{\star 2}_{z})(x)
	+(2d\aabz)^M (D^{\star M}\star G^{\star 3}_{z})(x).
	\end{align}
In the same way, we bound the square by
	\begin{align}
	\lbeq{Numeric-Square-Advanced}
 	\diagRepulsiveLetter{S}&_{m_1,m_2,m_3,m_4}(x) \\
	\leq& \sum_{i=m_{1,4}}^{M-1}\frac 1 6 \prod_{s=1}^3 (i-m_{1,4}+s)  a_{i}(x) \aabz^{i}
	+ \frac 1 6 \prod_{s=1}^3 (M-m_{1,4}+s)(2d\aabz)^M (D^{\star M}\star G_{z})(x)\nnb
	&+\frac {(M-m_{1,4})(M-1-m_{1,4})}{2}(2d\aabz)^M (D^{\star M}\star G^{\star 2}_{z})(x) \nnb \nn
	&+(M-m_{1,4})(2d\aabz)^M (D^{\star M}\star G^{\star 3}_{z})(x)
	+(2d\aabz)^M (D^{\star M}\star G^{\star 4}_{z})(x).
	\end{align}

\subsubsection{Bounds on weighted diagrams}
Weighted diagrams, such as $\Hcal^{n,l}_z(x)$ in \refeq{Xspace-Analysis-Object},  are bounded using the bootstrap function $f_3$, defined in \refeq{defFunc3}. It is especially beneficial to extract explicit contributions from the weighted diagrams, as the bound produced by $f_3$ is not very sharp and $\Hcal^{1,l}_z(x)$ decreases quite fast when we increase $l$. We conclude from \refeq{Additional-Bound-Probs-3}  that, for $l\leq M$,
	\begin{align}
	\Hcal^{1,l}_z(x)&=\sum_{y}\|y\|_2^2 G_z(y)(G_z \star D^{\star l})(x-y)
	\leq \sum_{i=l}^{M-1} (2d\aabz)^{i-l} 	\Hcal^{0,i}_z(x)\\
	&\qquad+ (2d\aabz)^{M-l} \Hcal^{1,M}_z(x).\nn
	\lbeq{bound-on-weighted-diags}
	\end{align}
For $x$ that are close to the origin we can bound $\Hcal^{0,i}_z(x)$ quite efficiently.
We abbreviate $H_{z}(x)=\|x\|_2^2G_z(x)$ and compute that
	\begin{align}
	\Hcal^{0,0}_z(0)&=H_{z}(0)=0,\\
	\lbeq{hcal01Bound}
	\Hcal^{0,1}_z(0)&=(D\star H_{z})(0)=\frac 1 {2d} \sum_\iota H_{z} (\ve[\iota]) = G_z(\ve[1]),\\
	\Hcal^{0,2}_z(0)&= (D\star D\star H_{z})(0)
	=\frac 1 {(2d)^2} \sum_{\iota,\kappa}
	\|\ve[\iota]+\ve[\kappa]\|_2^2 G_{z}(\ve[\iota]+\ve[\kappa]),\nnb
	&=\frac 1 {2d} \left( 4(2d-2)G_{z}(\ve[1]+\ve[2])+4 G_{z}(2\ve[1]) \right).
	\end{align}
Most of the weighted diagram that arise in our bounds on the NoBLE coefficient are repulsive. Using this repulsiveness, we can make another substantial improvement. As an example, let us consider $(a_1\otimes H_{z})(0)$. All connections counted in $H_z$ need to make at least three steps as the direct step is used by $a_1$, so that
	\begin{align}
	\frac 1 {2d} (a_1\otimes H_{z})(0)=(D\otimes H_{z})(0) =G_{3,z}(\ve[1]),
	\end{align}
which is numerically a factor $1/2d$ better than the bound in \refeq{hcal01Bound}.  If we consider two explicit steps of $\ve[\iota]+\ve[\kappa]$, then we obtain the bound
	\begin{align}
	(a_2\otimes H_{z})(0)\leq& 8d ( (2d-2)\aabz^4 + G_{6,z}(2\ve[1]))\\
	&\quad +8d(2d-2)(\aabz^2+4(2d-3) \aabz^4 + G_{6,z}(\ve[1]+\ve[2])).\nn
	\end{align}
In our computations we have extracted all paths up to a length of six and manually computed the number of  percolation paths that do not use any bond of the first  path. This leads to excellent bounds on closed weighted repulsive diagrams. For bubbles and triangles, we  extend the idea used in \refeq{Numeric-Bubble-Advanced}-\refeq{Numeric-Triangle-Advanced} for $m_1,m_2\in\Nbold$ with $m_1+m_2<6$, to compute that
	\begin{align}
	(G_{m_1,z}\otimes H_{z})(0)\leq& \sum_{i=m_1}^5 \aabz^i (a_{i}\otimes H_{z})(0)
	+(G_{6,z}\otimes H_{z})(0),\\
	(G_{m_1,z}\otimes G_{m_2,z}\otimes H_{z})(0)
	\leq& \sum_{i=m_1+m_2}^{5} \tfrac{1}{2}(i+1-m_1-m_2)(i+2-m_1-m_2)
	\aabz^i(a_i \otimes H_{z})(0)\nnb
	&+(6-m_1-m_2)\aabz^6 (a_6 \otimes G_{z}\otimes H_{z})(0)\nnb
	&+\aabz^6(a_6 \otimes G_{z}\otimes G_{z}\otimes H_{z})(0).
	\end{align}
The terms involving $a_{i}\otimes H_{z}$ are computed explicitly. All contributions involving a $G_z$ factor also contain a factor $a_6$, numerically making them of order $d^{-3}$. Since $H_z(y)=\|y\|^2G_z(y)$, we can bound these minor contributions using $f_3$ by removing the added restrictions that the $\otimes$ convolution imposes and replacing it by a normal convolution. In this bound, we even bound $a_6(y)\leq (2d)^6 D^{\star 6}(y)$. Following this strategy, we reduce the effect of the bad bound $f_3$ and enhance our numerical bounds.

\subsection{Analysis of matrix-valued diagrammatic bounds}
In Section \ref{secRewrite}, we use the bounds in Assumption \ref{assDiagBoundsCoeff} to bound the terms appearing in the rewrite of the NoBLE equation. These bounds are stated in terms of the functions $\Xi^{\ssc[N]}$ and $\Xi^{\ssc[N],\iota}$. Bounds on these quantities are proved in the model-dependent articles, where they are stated in terms of matrix products, such as
	\begin{align}
 	\lbeq{MatrixBoundExample}
	\sum_x  \Xi^{\ssc[N]}(x) &\leq \vec v^T {\bf B}^{N} \vec w,\\
 	\lbeq{MatrixBoundExample2}
	\sum_x \|x\|_2^2  \Xi^{\ssc[N]}(x)
	&\leq
	(N+2)\Big(\vec h^T {\bf B}^{N} \vec w+\sum_{M=0}^{N-1}\vec v^T
	{\bf B}^M{\bf C}  {\bf B}^{N-M-1} \vec w+\vec v^T {\bf B}^{N} \vec h\Big),
	\end{align}
where $\vec h,\vec v,\vec w\in\Rbold_+^n$ and ${\bf B},{\bf C}\in\Rbold_+^{n\times n}$ for some $n\geq 2$ and $N\geq 0$. We need to sum these bounds over various sets of $N$ to create $\beta_{\bullet}^{\sss \text{abs}},\beta_{\bullet}^{\sss \text{odd}}$ and $\beta_{\bullet}^{\sss \text{even}}$.

In this section, we explain how we compute the sum of these estimates. As a first step, we compute the eigensystem/spectrum of ${\bf B}$, so the left eigenvectors $\vec \eta_i$ and right eigenvectors $\vec \zeta_i$ to the eigenvalue $\lambda_i$. In our applications, there always exists a set of $n$ independent left and right eigenvectors. As these vectors are linearly independent, there exists $r_1,\dots, r_n$ and $b_1,\dots,b_n$ such that
	\begin{eqnarray}
	\lbeq{Numeric-Bound-vector1}
	\vec v=\sum_{i=1}^n r_i\vec \eta_i, \qquad\qquad \vec w=\sum_{i=1}^n b_i\vec \zeta_i.
	\end{eqnarray}
We compute $r_1,\dots,r_n$ using relations of the form
	\begin{align}
	\vec v=\sum_{i=1}^n r_1\vec \eta_i=
    	{\boldsymbol{\eta}}
	\vec{a},
\end{align}
where $\vec{r}^T=(r_1,\ldots, r_n)$, while the $i$th row of the matrix ${\boldsymbol{\eta}}$ equals $\vec \eta_i^T$. As the rows of the matrix ${\mathbf \eta}$ are independent vectors, ${\boldsymbol{\eta}}$ is invertible, so that
	\begin{align}
	{\boldsymbol{\eta}}^{-1}
	\vec v=
	\vec{r},
	\end{align}
which allows us to compute the $a_i$. The $b_i$'s are computed in the same way. We define
	\begin{align}
  	\vec v_i=r_i\vec \eta_i,\qquad\qquad
  	\vec w_i=b_i\vec \zeta_i,
	\end{align}
and note that these are also eigenvectors of ${\bf B}$ to the eigenvalue $\lambda_i$. Thus, for $N\geq 0$,
	\begin{align}
	\sum_x  \Xi^{\ssc[N]}(x) &\leq \vec v {\bf B}^{N} \vec w
	=\vec v \Big(\sum_i \lambda_i^{N} \vec w_i\Big).
	\end{align}
Using a geometric sum, we obtain
	\begin{align}
	\sum _{x,N}\sum_x  \Xi^{\ssc[N]}(x) \leq
	\sum_i \frac {1} {1-\lambda_i} \vec v \vec w_i
	=: \beta_{\sss \Xi}^{\sss \text{abs}}.
    \lbeq{MatrixBoundExample3}
	\end{align}
To create a closed form expression for the bound on the weighted diagrams appearing
in \refeq{MatrixBoundExample2} and use that $\vec v_i$ and $\vec w_j$ are eigenvectors of ${\bf B}$ to obtain
	\begin{align}
	\sum_{N\geq 0}\sum_{x} \|x\|_2^2  \Xi^{\ssc[N]}(x) \leq &
	\vec h^T \Big(\sum_i \sum_{N=0}^\infty(N+2)\lambda_i^N\vec w_i\Big)
	+\Big( \sum_i\sum_{N=0}^\infty (N+2) \lambda_i^N\vec v^T_i\Big) \vec h\nnb
	&+\sum_{N=0}^\infty (N+2) \sum_{M=0}^{N-1}\Big(\sum_i\lambda^M_i \vec v_i^T\Big)
	{\bf C} \Big(\sum_j\lambda_j^{N-M-1}\vec w_j\Big).
	\end{align}
For the second line, we rewrite the sums over $N,M$ for fixed $i,j$ as
	\begin{align}
	\sum_{N=0}^\infty (N+2) \sum_{M=0}^{N-1}\lambda^M_i \lambda_j^{N-M-1} \vec v_i^T{\bf C} \vec w_j
	&=\sum_{N=0}^\infty\sum_{M=0}^\infty (N+M+2) \lambda^M_i \lambda_j^{N} \vec v_i^T
	{\bf C} \vec w_j.
	\end{align}
We use the geometric sum identity
	\begin{align}
	\sum_{n=0}^\infty (n+1)\lambda^n=\frac {1} {(1-\lambda)^2},
	\end{align}
to bound \refeq{MatrixBoundExample2} as
	\begin{align}
	\sum_{N,x}& \|x\|_2^2  \Xi^{\ssc[N]}(x) \nnb
	\leq &
	\vec h^T \Big(\sum_i \sum_{N=0}^\infty(N+2)\lambda_i^{N}\vec w_i\Big)
	+\Big( \sum_i\sum_{N=0}^\infty (N+2) \lambda_i^{N}\vec v^T_i\Big) \vec h\nnb
	&+\sum_{i,j}\sum_{N=0}^\infty\sum_{M=0}^\infty (N+1) \lambda^M_i \lambda_j^{N}
	\vec v_i^T{\bf C} \vec w_j
	+\sum_{i,j}\sum_{N=0}^\infty\sum_{M=0}^\infty (M+1) \lambda^M_i
	\lambda_j^{N} \vec v_i^T{\bf C} \vec w_j\nnb
	=&\sum_i  \vec h^T\lambda_i\vec w_i
	\Big(\frac 1 {(1-\lambda_i)^2}+\frac {1}{1-\lambda_i}\Big)
	+ \sum_i  \lambda_i\vec v^T_i\vec h\Big(\frac 1 {(1-\lambda_i)^2}+\frac {1}{1-\lambda_i}\Big) \nnb
	&+\sum_{i,j} \frac {\vec v_i^T{\bf C} \vec w_j} {(1-\lambda_i)^2(1-\lambda_j)}+
	\sum_{i,j}\frac {\vec v_i^T{\bf C} \vec w_j} {(1-\lambda_i)} \frac 1 {(1-\lambda_j)^2}
	=: \beta_{\sss \Delta \Xi}^{\sss \text{abs}}.
    \lbeq{MatrixBoundExample4}
	\end{align}
This highlight how we bound $\Xi^{\ssc[N]}$. The bounds on $\Xi^{\ssc[N],\iota}$ are obtained in a similar way.
\medskip


\begin{remark}[The numerics of the matrix powers and the eigensystem]
\label{rem-eigen}
In the classical lace expansion, similar exponential bounds appear for the $N$th lace-expansion coefficient as a function of $N$, in which the base of the exponential is roughly bounded by the sum of the matrix elements $\sum_{i,j} {\bf B}_{i,j}$ (in fact, it is worse, since there loops do not have length at least 4). We use the matrix-valued bound to optimally use the fact that the number of steps in shared lines in loops appearing in our lace-expansion bounds provide information about the number of steps in other lines that are part of the same loop. Such bounds are most easily expressed in terms of matrix products. In these matrix bounds, the largest eigenvalue of ${\bf B}$ decides the magnitude of the bounds (see \refeq{MatrixBoundExample3} and \refeq{MatrixBoundExample4}). For example, for percolation and $d=11$, the matrix ${\bf B}$ equals
	\begin{align*}
	{\bf B}=\begin{pmatrix}
  	0.0134202&	0.0112907&	0.0257405 \\
	0.0127527&	0.0108018&	0.0338533\\
	0.028009&	0.0260537&	0.0401418
	\end{pmatrix}.
  	\end{align*}
For this matrix, the largest eigenvalue equals $\lambda_1\approx 0.073$, which is quite small, certainly when compared to $\sum_{i,j} {\bf B}_{i,j}\approx 0.2$.
\end{remark}

\section{Completion of the bootstrap argument and conclusions}
\label{sec-Comp}
In this section, we complete the bootstrap argument, and explain how the conditions are verified in  Mathematica notebooks. We start by summarizing where we stand.

\paragraph{Verification of the bootstrap conditions.}
In Section \ref{secVeriBootstrap}, we have verified the bootstrap conditions. Part of this verification was the improvement of the bootstrap bounds on the functions $f_1$, $f_2$ and $f_3$ defined in \refeq{defFunc1}--\refeq{defFunc3}, the latter being technically the most demanding.
A sufficient condition on the bounds on the NoBLE coefficients that allows us to improve the bounds on $f_1,f_2$ and $f_3$ is formulated in Definition \ref{finalCondition} in terms of bounds on a simplified rewrite of the NoBLE formulated in \refeq{generalForm-simple}, that is formulated in Assumption \ref{assDiagBounds}. These bounds on the rewrite are reformulated in terms of the original NoBLE coefficients $\Xi_z,\Xi_z^\iota,\Psi^\kappa_z$ and $\Pi^{\iota,\kappa}_z$ in Appendix \ref{secRewriteBounds}. The assumptions that we need to verify in the model-dependent papers \cite{FitHof13g} and \cite{FitHof13d}
are Assumptions \ref{assXBoundInitialCondition}-\ref{assContinuous} and Assumptions \ref{assSymtwo}--\ref{assDiagBoundsCoeff}. The initial Assumptions \ref{assSym}--\ref{assGzBehavoirCritical} are replaced using Proposition \ref{assTranslation} in Section \ref{secRewrite}.
\paragraph{Implications of the completed bootstrap and proof main result.}
The bound on $f_1$ implies a bound on $z_c$, while that on $f_2$ implies that the infrared bound holds with an explicit estimate on the constant given by $\Gamma_2$. Thus, $f_1$ and $f_2$ imply our main results for the model-dependent analysis for percolation in  \cite{FitHof13d} and for LT and LA in \cite{FitHof13g}. The bound on $f_3$ implies bounds on simple weighted diagrams, such as weighted lines, bubbles and triangles. These bounds are crucial in improving the bound on $f_2$, which implies our main result.

We now discuss the improvement of $f_3$ in more detail, splitting between the model-independent and the model-dependent parts, and their relations to SRW integrals, the bounds on the NoBLE coefficients and the Mathematica notebooks that finally complete the analysis and thus complete our proofs. We begin by discussing the model-independent improvement of $f_3$ and SRW integrals.
\paragraph{Model-independent improvement of $f_3$ and SRW integrals.}
In Section \ref{secf3}, we have proven that the bootstrap conditions on $f_3$ hold.
Both for the verification of the conditions on the initial point in Section \ref{secAnalysisXspaceF3Initial}, as well as for the improvement of the bounds on $f_3$ in Sections \ref{secAnalysisXspaceF3Decomposition}--\ref{subsecImproF3bar}, we have formulated our conditions in terms of $x$-space SRW integrals such as
$I_{n,l}(x), K_{n,l}(x),
T_{n,l}(x), U_{n,l}(x),  L_{n}(x), V_{n}(x)$ and $\Isupx_{n,l}(x)$. The numerical values of these functions are crucial to allow us to verify that the required bounds on the initial point hold, and to improve the bound on $f_3$. Thus, in order to perform a successful bootstrap analysis, we need to obtain rigorous numerical bounds on such SRW integrals. These numerical bounds are explained in detail in Section \ref{secNumerics}, using the ideas by Hara and Slade in \cite{HarSla92b}. These values are formulated in terms of integrals of Bessel-functions and are computed analytically, up to a specified precision in the Mathematica notebook available at \cite{FitNoblePage}. This notebook needs to be compiled before the model-dependent parts can be performed, as the model-dependent analyses use them.
We will explain the Mathematica notebooks in the next paragraphs. We first explain how the bounds on $f_1,f_2$ and $f_3$ can be used to obtain sharp numerical bounds on the NoBLE coefficients.
\paragraph{Model-dependent improvement of $f_3$ and bounds on NoBLE coefficients.}
Having accurate numerical bounds on all the SRW-integrals involved at hand, we can obtain all bounds on $f_3$ formulated in Section \ref{secf3}, see in particular \refeq{bound-on-f3}. The remaining main ingredient of the bounds on $f_3$ is formed by the bounds on the NoBLE coefficients. The initial bounds on $f_1,f_2,f_3$ imply bounds on $z_c$, $\hat{G}_z(k)$, as well as on several simple weighted diagrams implied by $f_3$, in terms of $\Gamma_1,\Gamma_2$ and $\Gamma_3$. These, in turn, allow us to prove bounds on the NoBLE coefficients and to identify $\beta_{\bullet}^\ssc[N]$ for every $N\geq 0$ and $\bullet \in \{ \Xi, \Xi^{\iota}, \Delta\Xi, \{\Delta \Xi^\iota,0\},\{\Delta \Xi^\iota,1\}\}$, as formulated in Assumption \ref{assDiagBoundsCoeff}. This is, next to the derivation of the NoBLE, the second main result in the model-dependent papers \cite{FitHof13d} and \cite{FitHof13g}, where \cite{FitHof13d} treats percolation, and \cite{FitHof13g} LT and LA. As a result, we then have all necessary bounds needed to verify the improvement of the bootstrap bounds. The numerical verification is performed in several Mathematica notebooks as we explain next.
\paragraph{Model-dependent and computer-assisted verifications using Mathematica.}
In order to complete the model-dependent and computer-assisted proof, we need to run the model-dependent Mathematica notebooks that can be found at the first author's web page \cite{FitNoblePage}.
We first need to choose the dimension in \verb|SRW_basic.nb| and run the file.
This creates the numerical input needed for the model-dependent NoBLE computations, which is done in the files \verb|Percolation.nb|, \verb|LT.nb|, and \verb|LA.nb|.
\footnote{At the time of publication the LT and LA Mathematica files are not made publicly available, as the corresponding article is not finalised.}
When compiled, these model-dependent files compute whether $P(\gamma,\Gamma,\cdot)$ holds for the given input, $\Gamma_1,\Gamma_2,\Gamma_3$ and $c_\mu,c_{n,l,S}$, by going through the loop described in Figure \ref{fig-Heuristic-bootstrap}: We first compute bounds on simple diagrams for the initial point $z_I$, for which we do not need to rely on the bootstrap bounds in terms of $\Gamma_1,\Gamma_2,\Gamma_3$, but we can rely directly on the link to SRW as formulated in Assumption \ref{assXBoundInitialCondition}.
For $z\in (z_I,z_c)$, we do rely on the bootstrap bounds to conclude bounds on the bootstrap functions.
When both the bounds for $z_I$ and for $z\in (z_I,z_c)$, are bounded by the chosen $\Gamma_i$ for $i=1,2,3$,
we conclude the existence of appropriate $\gamma_i$, e.g.\ $\gamma_i=(\Gamma_i+\text{(computed bound)})/2$,
we have proven that $P(\gamma,\Gamma,\cdot)$ holds.
This numerical verification completes the argument for the given model in the given dimension. In \cite{FitHof13d} and \cite{FitHof13g},
it is also explained how we can then use monotonicity in the dimension $d$ to obtain the result for {\em all} dimensions larger than the specified dimension.
Further, the model-dependent notebooks contains an algorithm that helps to choose the optimal value for the constants $\Gamma_1,\Gamma_2,\Gamma_3$ and $c_\mu,c_{n,l,S}$. This is explained in the implementation. See e.g., \cite{FitHof13d} for a more extensive discussion for percolation.
\paragraph{Complete version of the Mathematica SRW files.}
Next to the basic version of the SRW notebooks, we also provide a complete version, which should be used when dealing with the models in relatively low dimensions. The notebook \verb|SRW.nb| serves two purposes. Firstly, it allows us to compute SRW integrals with the desired precision in dimensions close to the upper critical dimension. The file \verb|SRW_basic.nb| uses built-in Mathematica functions, and can only be used for percolation in $d\geq 15$, as otherwise the desired numerical precision cannot be guaranteed. In the file \verb|SRW.nb|, we use a Taylor approximation of the Bessel function, as explained in detail in \cite[Appendix B]{HarSla92b}, instead, that gives reliable results for $d\geq 9$. These computations make compiling \verb|SRW.nb| take around an hour, while \verb|SRW_basic.nb| is compiled in less than a minute. Second, \verb|SRW.nb| allows the use of arbitrary index sets $\mathcal{S}$ for $f_3$, see \refeq{defFunc3}. Using the basic version only the vertex sets $S=0$ and $S=\Zd\setminus\{0\}$ can be considered. These extensions are crucial to reduce the dimension above which our results apply.
\paragraph{Acknowledgements.}
This work was supported in part by the Netherlands Organisation for Scientific Research (NWO) through VICI grant 639.033.806 and the Gravitation {\sc Networks} grant 024.002.003. We thank
David Brydges, Takashi Hara and Gordon Slade for their constant encouragement, as well as for several stimulating discussions. This work builds upon the work by Takashi Hara and Gordon Slade. We particularly thank Takashi Hara for sharing his handwritten notes on the proof of mean-field behavior for $d\geq 19$, and explaining how this can be extended to $d\geq 15$. We have thoroughly enjoyed our animated discussions with Takashi in July 2013, which allowed us to compare notes and estimates on triangles, two-point functions, etc. Without these discussions, it would have been much harder to compare our results to those of Hara and Slade. The work of RF was performed in part at Stockholm University in the period September 2013 until September 2015. The authors gratefully acknowledge the advice of the referee, who found many typos and gave many suggestions that significantly improved the presentation of this paper.

\appendix
\section{Notation}
\begin{table}[h]
\begin{tabular}{|c|c|c|}
  \hline
  Notation & brief description & defined in\\
  \hline
  SRW & simple random walk  												                & Section \ref{subsecintroSRW}\\
  NBW & non-backtracking random walk  										                & Section \ref{subsecintroNRW}\\
  \hline
  & & \\[-2mm]
  $D$ & SRW step distribution                                                               & \refeq{DefDK} \\
  $\iota,\kappa$ & direction of a bond $\iota,\kappa\in\{\pm 1,\dots,\pm d \}$              & Section \ref{subsecintroSRW} \\
  $u,v,w,x,y$ & points on the lattice: $\Zd$                                                & \\
  $k$ & Fourier argument, so $k\in(-\pi,\pi)^d$ 								       	    & Section \ref{subsecintroSRW} \\
  $z$ & parameter of a generating function,  $z\in [0, z_c]$         						& \\
  $f\star g, f^{\star n}$ & convolution of functions $f,g\mapsto \Zd$                       &  \refeq{definition-convolution}\\
  $\partial_i f, \Delta f$ & directional derivative and Laplace operator         &  \refeq{Xspace-Analysis-deltaArgument}\\
  \hline
  & & \\[-2mm]
  $C_z, B_z$ &SRW and NBW two-point functions 	             				    			&  \refeq{genSRW}, \refeq{NBWRecScheme3} \\
  $\mJ$ & permutation matrix with entries $(\mJ)_{\iota,\kappa}=\delta_{\iota,-\kappa}$ 	& \refeq{NBW-MatrixDefinition}\\
  $\mD[k]$ & diagonal matrix with entries $(\mD[k])_{\iota,\kappa}=\delta_{\iota,\kappa} \e^{\ii k_\iota}.$ & \refeq{NBW-MatrixDefinition}\\
  $\chi(z)$ & susceptibility of a given model 				              					&  \refeq{susceptibility-SRW}\\
  $G_z$ & two-point function of general model 									            &  \refeq{basicgeneral1},\refeq{generalForm}, \refeq{generalForm-simple} \\
  $\Xi_z, \Xi^{\iota}_z$ & coefficient of the NoBLE expansion 						        &  \refeq{basicgeneral1}, \refeq{XiAsBounds}\\
  $\Psi^{\kappa}_z, \Pi^{\iota,\kappa}_z$ & coefficient of the NoBLE expansion 			    &  \refeq{basicgeneral2}, \refeq{XiAsBounds-2}\\
  $\aaz, \aabz$ & NoBLE parameters directly connected to $z$ 			        &  \refeq{basicgeneral1},\refeq{basicgeneral2} \\
  $\hat\Phi_z,\hat F_z$ & numerator and denominator of $\hat G_z$ 										            &  \refeq{DefPhi},\refeq{DefFFunction} \\
   $\apz, \cpz$ & SRW contributions in $\hat \Phi_z(k)$ 							        &  \refeq{DefPhi-simple},\refeq{detailed-Def-cpz}, \refeq{detailed-Def-apz}  \\
   $\afz, \cfz$ & SRW contributions in $\hat F_z(k)$ 								        &  \refeq{DefFFunction-simple}, \refeq{detailed-Def-afz}, \refeq{detailed-Def-cfz} \\
   $\hRpz(k),\hRfz(k)$ & remainder of the split of $\hat \Phi_z(k)$ and $\hat F_z(k)$ 		&  \refeq{detailed-Def-Rfz}, \refeq{DefFFunction-simple}\\[2mm]
  \hline
  & & \\[-2mm]
   $f_1,f_2,f_3$ & bootstrap functions                                                      &  \refeq{defFunc1}-\refeq{defFunc3}\\
   $\gamma_i,\Gamma_i$ & assumed/concluded bounds on the $f_i$                              &  \refeq{defFunc1}-\refeq{defFunc3}\\
   $\beta_\bullet$ & assumed bound on the coefficient                                       & Assumptions \ref{assDiagBounds},\ref{assDiagBoundsCoeff}\\
  \hline
  & & \\[-2mm]
   $\hat D^{\sin}(k)$ & $\sum_{s=1}^d (\partial_s \hat D(k))^2$                             & \refeq{Dsin-def} \\
   $I_{n,l},K_{n,l},T_{n,l},U_{n,l}$ & SRW two-point function and integrals                 & \refeq{Analysis-Def-Integral-I}-\refeq{Analysis-Def-Integral-U}\\
   $\Xi^{\ssc[N]}_{\alpha,z},\Psi^{\ssc[N],\iota}_{\alpha,{\sss II},z},\Pi^{\ssc[0],\iota,\kappa}_{{\sss R},z}$ & model-dependent split of the NoBLE coefficients    & Section \ref{secRewriteSplitSummary}\\
  \hline
\end{tabular}
\caption{List of notation, that is used in at least two different sections.}
\end{table}
\clearpage
\section{Proof of Lemmas \ref{lemmaFdifferenceToX}-\ref{lemmacosdecomp} }
\label{sec-proof-lemmas}

\begin{proof}[Proof of Lemma \ref{lemmaFdifferenceToX}]
Using a telescoping sum identity, we write
	\begin{eqnarray}
	1-\e^{\ii k\cdot x}=1-\e^{\ii k_1x_1}
	+\e^{\ii k_1x_1}(1-\e^{\ii k_2x_2})+\dots+\e^{\ii\sum_{j=1}^{d-1}k_jx_j}(1-\e^{\ii k_dx_d}).
	\end{eqnarray}
We reorder the sum over $x$ using the symmetry of $x\mapsto g(x)$, to obtain
	\begin{align}
	\sum_{x}g(x)[1-\cos(k\cdot x)]
	=&\sum_{x}g(x)\sum_{\mu=1}^d\cos\Big(\sum_{j=1}^{\mu-1}k_jx_j\Big)
	[1-\cos(k_\mu\cdot x_\mu)]\nnb
	\leq& \sum_{x}g(x)\sum_{\mu=1}^d [1-\cos(k_\mu\cdot x_\mu)].
	\end{align}
We use $1-\cos(nt)\leq n^2 [1-\cos(t)]$, see e.g.\ Lemma \ref{lemmacosdecomp} below, and the symmetry of $g$ to see that
	\begin{eqnarray}
	\sum_{x}g(x)[1-\cos(k\cdot x)]&\leq& \sum_{x}g(x)\sum_{\mu=1}^d x^2_\mu [1-\cos(k_\mu)]\nnb
	&=&[1-\hat D(k)] \sum_{x}g(x) \|x\|_2^2.
	\end{eqnarray}
	\end{proof}

\begin{proof}[Proof of Lemma \ref{lemmacosdecomp}]
	We obtain \refeq{lemmacosdecompFirstStep} by taking the real part of the telescoping sum identity
		\begin{eqnarray}
		1-\e^{\ii t}=  \sum_{i=1}^J [1-\e^{\ii t_i}] \prod_{j=1}^{i-1} \e^{\ii t_j}.
		\end{eqnarray}
In the following, we use that $|\sin(x+y)|\leq|\sin(x)|+|\sin(y)|$, $|ab|\leq (a^2+b^2)/2$ and $1-\cos^2(a)\leq 2[1-\cos(a)]$ to conclude from \refeq{lemmacosdecompFirstStep} that
		\begin{eqnarray}
		\nn
		1-\cos(t)&\leq& \sum_{i=1}^J [1-\cos(t_i)]+ \sum_{i=2}^J \sum_{j=1}^{i-1} |\sin(t_i)||\sin(t_j)|\\
		\nn
		&\leq& \sum_{i=1}^J [1-\cos(t_i)]+ \frac 1 2 \sum_{i=2}^J
		\sum_{j=1}^{i-1} 	\left[\sin^2(t_i)+\sin^2(t_j)\right]\\
		&=& \sum_{i=1}^J [1-\cos(t_i)]+ \frac {J-1} 2 \sum_{i=1}^J  \sin^2(t_i)\leq J\sum_{i=1}^J [1-\cos(t_i)].
		\end{eqnarray}
	\vskip-0.4cm
	\end{proof}
	
\section{Decomposition of $\bigtriangleup\hat G_z(k)$ for the bootstrap function $f_3$}
\label{sec-decomposition-f3}

We improve the bound on the bootstrap function $f_3$ in Section \ref{subsecImproF3bar} using the decomposition of
$\bigtriangleup\hat G_z(k)$ into the five pieces $\hat{H}_1(k)-\hat{H}_5(k)$ defined in \refeq{def-H1}-\refeq{def-H5}, as stated in
\refeq{sec-decomposition-f3-statement}. We create this decomposition by expanding $\bigtriangleup\hat G_z(k)$ in
the form of the simplified rewrite, defined in \refeq{DefPhi-simple}-\refeq{generalForm-simple}, and group the terms
with common factors.  This decomposition is a key step in our analysis.
The decomposition is long and tedious and, due to the many terms, prone to errors.
For this reason, we explicitly perform it here in the appendix.
\paragraph{Expanding the terms.}
Our strategy is to first expand
	\begin{align}
	\bigtriangleup\hat G_z(k)=&\frac {\bigtriangleup\hat \Phi_z(k)}{1-\hat F_z(k)}+
	\hat \Phi_z(k) \Big(\bigtriangleup\frac {1}{1-\hat F_z(k)} \Big)
	+2\sum_{s=1}^d\partial_{s}\hat \Phi_z(k)
	\partial_{s} \Big(\frac {1}{1-\hat F_z(k)} \Big),
	\lbeq{XspaceDecomp-tmp1}
	\end{align}
and then group them in terms of their common factors.
To not loose track of terms, we label them using the line number in the formula in which they appear, e.g., (\ref{e:XspaceDecomp-tmp2}.a) and (\ref{e:XspaceDecomp-tmp4}.d).\\

Before stating this, we remark how we extract SRW-contributions $\hat C^*(k)$ from inverse powers of $(1-\hat F_z(k))$ using the notation introduced in \refeq{Analysis-Def-C-eps}-\refeq{FExpandDeltaFNotation}:
	\begin{align}
	\lbeq{FExpandForDeltaExpand}
	\frac 1 {1-\hat F_z(k)}&= \frac 1 {1-\hat F_z(0)+\afz[1-\hat D(k)] +\deltaRfz}
	=\hat C^*(k) -\hat E (k),\\
	\lbeq{FExpandForDeltaExpand-2}
	\frac 1 {(1-\hat F_z(k))^2}&=\hat C^*(k)^2 -\hat E (k)\hat C^*(k)-\frac{\hat E (k)}{1-\hat F_z(k)},\\
	\lbeq{FExpandForDeltaExpand-3}
	\frac 1 {(1-\hat F_z(k))^3}&=\hat C^*(k)^3 -\hat E (k)\hat C^*(k)^2-\frac{\hat E (k)\hat C^*(k)}
	{1-\hat F_z(k)}-\frac{\hat E (k)}{(1-\hat F_z(k))^2}.
	\end{align}
Further, we recall the form of $\hat \Phi_z(k)$ and $\hat F_z(k)$, see \refeq{DefPhi-simple}-\refeq{DefFFunction-simple}, and note that
	\begin{align}
	\bigtriangleup\hat \Phi_z(k)=&-\apz\hat D(k)+\bigtriangleup\hRpz(k), \quad
	\bigtriangleup\hat F_z(k)=-\afz\hat D(k)+\bigtriangleup\hRfz(k).
      	\lbeq{deltaFcomp}
	\end{align}
Then, we proceed by expanding the terms in \refeq{XspaceDecomp-tmp1}.
\paragraph{First term of \refeq{XspaceDecomp-tmp1}.}
Using \refeq{FExpandForDeltaExpand} we compute
	\begin{align}
	\lbeq{XspaceDecomp-tmp2}
	\frac{\bigtriangleup\hat \Phi_z(k)}{1-\hat F_z(k)}&=
	\underbrace{-\apz\hat D(k)\hat C^*(k)}_{\text{:=(\ref{e:XspaceDecomp-tmp2}.a)}}
	+\underbrace{\apz\hat D(k)\hat E(k)}_{\text{:=(\ref{e:XspaceDecomp-tmp2}.b)}}
	+\underbrace{\frac{\bigtriangleup\hRpz(k)}{1-\hat F_z(k)}}_{\text{:=(\ref{e:XspaceDecomp-tmp2}.c)}}.
	\end{align}
\paragraph{Second term of \refeq{XspaceDecomp-tmp1}.}
We first note that
	\begin{align*}
 	\bigtriangleup\frac {1}{1-\hat F_z(k)}=&\frac {\bigtriangleup \hat F_z(k)}{(1-\hat F_z(k))^2}
	+2\frac {\sum_{s=1}^d (\partial_{s}\hat F_z(k))^2}{(1-\hat F_z(k))^3}.
	\end{align*}
We use \refeq{FExpandForDeltaExpand-2}, \refeq{deltaFcomp} and $\hat\Phi_z(k)=\cpz+\apz\hat D(k)+\hRpz(k)$ for the term containing $\bigtriangleup \hat F_z(k)$ and obtain
	\begin{align}
	\hat\Phi_z(k)\frac {\bigtriangleup \hat F_z(k)}{(1-\hat F_z(k))^2}=&\hat\Phi_z(k)\frac {-\afz\hat D(k)}{(1-\hat F_z(k))^2}+\hat\Phi_z(k)\frac {\bigtriangleup\hRpz(k)}{(1-\hat F_z(k))^2}
\lbeq{XspaceDecomp-tmp3}\\
	=&\underbrace{-\afz\hat D(k)(\cpz+\apz\hat D(k))\hat C^*(k)^2}_{\text{:=(\ref{e:XspaceDecomp-tmp3}.a)}}\nnb
	&+\underbrace{ \afz\hat D(k)(\cpz+\apz\hat D(k))\left(\hat E(k) \hat C^*(k)+\frac {\hat E(k)}{ (1-\hat F_z(k))}\right)}_{\text{:=(\ref{e:XspaceDecomp-tmp3}.b)}}\nnb
	&\underbrace{- \frac{\hRpz(k)\afz\hat D(k)}{ (1-\hat F_z(k))^2}}_{\text{:=(\ref{e:XspaceDecomp-tmp3}.c)}}
	+ \underbrace{\frac{\bigtriangleup\hRfz(k)}{ (1-\hat F_z(k))}\hat G_z(k)}_{\text{:=(\ref{e:XspaceDecomp-tmp3}.d)}}.\nn
	\end{align}
For the term including the mixed derivatives we compute that
	\begin{align}
	2\hat\Phi_z(k) \frac {\sum_{s=1}^d (\partial_{s}\hat F_z(k))^2}{(1-\hat F_z(k))^3}=&2\hat\Phi_z(k) \frac {\sum_{s=1}^d (\afz\partial_{s}\hat D(k)+\partial_{s}\hRfz(k))^2}{(1-\hat F_z(k))^3}
	\lbeq{XspaceDecomp-tmp4}\\
   =&2\afz^2(\cpz+\apz\hat D(k)+\hRpz(k)) \frac {\sum_{s=1}^d \partial_{s}\hat D(k)^2}{(1-\hat F_z(k))^3}\nnb
    &+2\hat\Phi_z(k)\frac {\sum_{s=1}^d \partial_{s}\hRfz(k)^2+2\afz\partial_{s}\hat D(k)\partial_{s}\hRfz(k)}{(1-\hat F_z(k))^3}\nnb
    =&\underbrace{2\afz(\cpz+\apz\hat D(k))\hat D^{\sin}(k)\hat C^*(k)^3}_{\text{:=(\ref{e:XspaceDecomp-tmp4}.a)}} \nnb
    &\underbrace{-2\afz(\cpz+\apz\hat D(k))\hat D^{\sin}(k) \left(\hat E (k)\hat C^*(k)^2+\frac{\hat E (k)\hat C^*(k)}{1-\hat F_z(k)}\right)}_{\text{:=(\ref{e:XspaceDecomp-tmp4}.b)}}\nnb
    &+\underbrace{2\afz \hRpz(k) \frac {\hat D^{\sin}(k)}{(1-\hat F_z(k))^2} \hat C^*(k)}_{\text{:=(\ref{e:XspaceDecomp-tmp4}.c)}} \underbrace{- 2\afz \hat \Phi_z(k) \frac {\hat D^{\sin}(k)}{(1-\hat F_z(k))^2}\hat E (k)}_{\text{:=(\ref{e:XspaceDecomp-tmp4}.d)}}\nnb
    &+\underbrace{2\afz(\afz-1)\hat \Phi_z(k) \frac {\hat D^{\sin}(k)}{(1-\hat F_z(k))^3}}_{\text{:=(\ref{e:XspaceDecomp-tmp4}.e)}}\nnb
    &+\underbrace{2\hat\Phi_z(k)\frac {\sum_{s=1}^d \partial_{s}\hRfz(k)^2+2\afz\partial_{s}\hat D(k)\partial_{s}\hRfz(k)}{(1-\hat F_z(k))^3}}_{\text{:=(\ref{e:XspaceDecomp-tmp4}.f)}}\nn
	\end{align}


\paragraph{Third term of \refeq{XspaceDecomp-tmp1}.}
The expand the last term of \refeq{XspaceDecomp-tmp1} as follows
	\begin{align}
	2\sum_{s=1}^d\partial_{s}\left(\hat \Phi_z(k)\right)
	& \partial_{s} \left(\frac {1}{1-\hat F_z(k)} \right)=2\sum_{s=1}^d (\apz\partial_{s}\hat D(k)+\partial_{s} \hRpz(k))\frac {(\afz\partial_{s}\hat D(k)+\partial_{s} \hRfz(k))} {(1-\hat F_z(k))^2}
	\nnb
	=& \underbrace{2\apz \hat D^{\sin}(k) \hat C^*(k)^2}_{\text{:=(\ref{e:XspaceDecomp-tmp5}.a)}}
    \underbrace{-2\apz \hat D^{\sin}(k) \hat E(k)\hat C^*(k)}_{\text{:=(\ref{e:XspaceDecomp-tmp5}.b)}}
    \underbrace{-2\apz \hat D^{\sin}(k) \frac {\hat E(k)}{ 1-\hat F_z(k)}}_{\text{:=(\ref{e:XspaceDecomp-tmp5}.c)}}\nnb
    &+\underbrace{2(\afz-1)\apz \hat D^{\sin}(k) \frac 1 {(1-\hat F_z(k))^2}}_{\text{:=(\ref{e:XspaceDecomp-tmp5}.d)}}\nnb
	&+\underbrace{2\frac {1}{(1-\hat F_z(k))^2} \sum_{s=1}^d\left( \partial_{s} \hRpz(k)\afz \partial_{s} \hat D(k)+
	\partial_{s}\hat \Phi_z(k) \partial_{s} \hRfz(k)\right)}_{\text{:=(\ref{e:XspaceDecomp-tmp5}.e)}}.
	\lbeq{XspaceDecomp-tmp5}
	\end{align}
\paragraph{Regrouping the terms.}
In this section we group the terms (\ref{e:XspaceDecomp-tmp2}.a)-(\ref{e:XspaceDecomp-tmp5}.e) to derive $\hat H_1(k)-\hat H_5(k)$.\\
Let us start by recalling that $\hat M^*(k)=\hat D(k)-2 \hat D^{\sin}(k)\hat C^*(k)$ and see that
	\begin{align}
	\text{(\ref{e:XspaceDecomp-tmp2}.a)}+\text{(\ref{e:XspaceDecomp-tmp5}.a)}
	&=-\apz\hat C^*(k)\hat M^*(k),\nnb
	\text{(\ref{e:XspaceDecomp-tmp3}.a)}+\text{(\ref{e:XspaceDecomp-tmp4}.a)}
	&=-\afz(\cpz+\apz\hat D(k))\hat C^*(k)^2\hat M^*(k).\nn
    \end{align}
Reviewing the definition of $\hat H_1(k)$ we see that
    \begin{align}
    \text{(\ref{e:XspaceDecomp-tmp2}.a)}+
    \text{(\ref{e:XspaceDecomp-tmp3}.a)}+
    \text{(\ref{e:XspaceDecomp-tmp4}.a)}+
    \text{(\ref{e:XspaceDecomp-tmp5}.a)}
    =-\hat H_1(k).
    \end{align}
Then, we proceed with the term $\hat H_2(k)$:
	\begin{align}
	\text{(\ref{e:XspaceDecomp-tmp2}.b)}
	+\text{(\ref{e:XspaceDecomp-tmp5}.b)}
	&=\apz\hat E(k)\hat M^*(k),\nnb
    	\text{(\ref{e:XspaceDecomp-tmp3}.b)}
	+\text{(\ref{e:XspaceDecomp-tmp4}.b)}
	&=\afz(\cpz+\apz\hat D(k))\hat M^*(k)\left(\hat E(k) \hat C^*(k)+\frac {\hat E(k)}{ (1-\hat F_z(k))}\right),\nnb
    	\text{(\ref{e:XspaceDecomp-tmp3}.c)}+
	\text{(\ref{e:XspaceDecomp-tmp4}.c)}
	&=-\frac{\afz \hRpz(k)}{ (1-\hat F_z(k))^2}\hat M^*(k),\nnb
    	\text{(\ref{e:XspaceDecomp-tmp2}.b)}+
	\text{(\ref{e:XspaceDecomp-tmp5}.b)}&+
	\text{(\ref{e:XspaceDecomp-tmp3}.b)}+
	\text{(\ref{e:XspaceDecomp-tmp4}.b)}+
	\text{(\ref{e:XspaceDecomp-tmp3}.c)}+
	\text{(\ref{e:XspaceDecomp-tmp4}.c)}
	=-\hat H_2(k).
    	\end{align}
$\hat H_3(k)$ captures the following terms:
	\begin{align}
	\text{(\ref{e:XspaceDecomp-tmp4}.d)}
	+\text{(\ref{e:XspaceDecomp-tmp4}.e)}
	=&2\afz\hat G_z(k)\frac {\hat D^{\sin}(k)}{1-\hat F_z(k)}\left(\frac {\afz-1}{(1-\hat F_z(k))}-  \hat E (k)\right),
	\nnb
	\text{(\ref{e:XspaceDecomp-tmp5}.c)}+
	\text{(\ref{e:XspaceDecomp-tmp5}.e)}
	=&2\apz \frac {\hat D^{\sin}(k)} {1-F_z(k)}  \left(\frac {\afz-1} {1-F_z(k)}- \hat E(k)\right),\nnb
 	\text{(\ref{e:XspaceDecomp-tmp4}.d)}+
	\text{(\ref{e:XspaceDecomp-tmp4}.e)}+&
	\text{(\ref{e:XspaceDecomp-tmp5}.c)}+
	\text{(\ref{e:XspaceDecomp-tmp5}.d)}=-\hat H_3(k).
	\end{align}
In $\hat H_4(k)$, we collect the double derivatives of the remainder terms.
	\begin{align}
	\text{(\ref{e:XspaceDecomp-tmp2}.c)}+
	\text{(\ref{e:XspaceDecomp-tmp3}.d)}
	=\frac{\bigtriangleup\hRpz(k)}{1-\hat F_z(k)}+\frac{\bigtriangleup\hRfz(k)}{ (1-\hat F_z(k))}\hat G_z(k)
	=-\hat H_4(k).
	\end{align}
The two remaining terms consist of mixed derivatives and are put into $\hat H_5(k)$:
	\begin{align}
	\text{(\ref{e:XspaceDecomp-tmp4}.f)}+
	\text{(\ref{e:XspaceDecomp-tmp5}.e)}&=-\hat H_5(k).
	\end{align}
This completes the derivation of the split into $\hat H_1(k)-\hat H_5(k)$.

\section{Bound on the rewrite}
\label{secRewriteBounds}
Here we prove Proposition \ref{assTranslation}(ii). Namely, we prove that the bounds stated in Assumption \ref{assDiagBounds} are implied by Assumptions  \ref{assSymtwo}-\ref{assDiagBoundsCoeff}, by computing expressions for the bounds required in Assumption \ref{assDiagBounds} one after the other.

For better readability, we divide this into five steps: Step 1 contains the six relatively simple bounds stated
in \refeq{analys-assumed-rho-Bound}-\refeq{analys-assumed-Bound-PiPsi}, while Steps 2-5 show
the more elaborate bounds stated in \refeq{analys-assumed-Remainder}-\refeq{analys-assumed-displacement-3}. The bounds involved are not difficult, but rather somewhat tedious.
Throughout this section, we omit the subscripts $z$ and write, e.g., $\aa=\aaz$ and $\aab=\aabz$.

\paragraph{Step 1: bounds stated in \refeq{analys-assumed-rho-Bound}-\refeq{analys-assumed-Bound-PiPsi}.}

\begin{enumerate}[a)]
\item The bound $\betaaa$ in \refeq{analys-assumed-rho-Bound} is also assumed in \refeq{analys-assumed-rho-Bound-two} of Assumption \ref{assDiagBoundsCoeff}.

\item We recall the definition of $\cpz$ in \refeq{detailed-Def-cpz}, use the bounds of Assumption \ref{assDiagBoundsCoeff} and the fact that the coefficients $\Xi_z^\ssc[N],\Xi_z^{\ssc[N],\iota}$ are non-negative to conclude
	\begin{align}
	1-\beta_{{\sss\Xi_\alpha(0)}}^\ssc[1-0]-
	\frac {2d \aa} 	{1-\aa^2}\beta_{{\sss\Xi^{\iota}_\alpha,I}}^\ssc[0]\leq \cpz
	\leq 	1+\beta_{{\sss\Xi_\alpha(0)}}^\ssc[0-1]
	+\frac {2d \aa^2} {1-\aa^2}\beta_{{\sss\Xi^{\iota}_\alpha,II}}^\ssc[0].
	\end{align}
In this and the following lines we bound $\aa$ by $\tfrac {\Gamma_1c_\mu}{2d-1}$.
\item For $\afz$, defined in \refeq {detailed-Def-afz}, we use the bounds of Assumption \ref{assDiagBoundsCoeff} to obtain
	\begin{align}
	\frac {2d \aa} {1-\aa^2}
	&\left[1-\beta_{{\sss\sum \Psi^{\iota}_\alpha,I}}^\ssc[1-0]
	-\aa\beta_{{\sss\sum \Psi^{\iota}_\alpha,II}}^\ssc[0-1]
	-\frac {1} {1-\aa^2}  \bar{\beta}_{\sum \sss \Pi_{\alpha}}^\ssc[0]\right]\nnb
	&\leq \afz\leq\frac {2d \aaz} {1-\aaz^2}
	\left[1+\beta_{{\sss\sum \Psi^{\iota}_\alpha,I}}^\ssc[0-1]
	+\aa\beta_{{\sss\sum \Psi^{\iota}_\alpha,II}}^\ssc[1-0]
	-\frac {1} {1-\aa^2} \underline{\beta}_{ \sss  \sum \Pi_{\alpha}}^\ssc[0]\right].
	\end{align}
We bound $\aa$ on the left-hand side of the inequality from below by $\betaaalow$.
\item The absolute value of $\apz$ defined in \refeq{detailed-Def-apz} is bounded by
	\begin{align}
	|\apz|=\max\Big \{
	&2d\beta_{{\sss\Xi_\alpha(\ve[1])}}^\ssc[1-0]
	+ \frac {2d \aa} {1-\aa^2} \beta_{{\sss\sum\Xi^{\iota}_\alpha,I}}^\ssc[0],
	2d\beta_{{\sss\Xi_\alpha(\ve[1])}}^\ssc[0-1]+ \frac {2d \aa^2} {1-\aa^2}
	\beta_{{\sss\sum\Xi^{\iota}_\alpha,II}}^\ssc[0]\Big\}.
	\end{align}
\item To bound $\sum_{x,\kappa}\Pi_z^{\kappa,\iota}(x)$ we use that the coefficients are defined by an alternating series, as well as Assumption \ref{assCoefficentsRelation}-\ref{assDiagBoundsCoeff}, to obtain
	\begin{align}
	\sum_{x,\kappa}\Pi_z^{\kappa,\iota}(x)\leq&
	\sum_{N=0}^\infty \sum_{x,\kappa}
	\Pi_z^{\ssc[2N],\kappa,\iota}(x) -\Pi_z^{\ssc[1],\kappa,\iota}(x)
	\leq \aab\sum_{x,N,\kappa}\Xi_z^{\ssc[2N],\iota}(x)
	-\sum_{\kappa}\hat \Pi_z^{\ssc[1],\kappa,\iota}(0)
	\nnb
	\leq& 2d \aab\Big(\sum_{N=0}^\infty \beta^\ssc[2N]_{\sss \Xi^\iota}\Big)
	-\underline{\beta}_{\sss \sum \Pi}^\ssc[1].
	\end{align}

\item In the same way, we obtain that
	\begin{align}
	\sum_{x}\Psi_z^{\kappa}(x)\geq  -\betaaa
	\Big(\sum_{N=0}^\infty \beta^\ssc[2N+1]_{\sss \Xi}\Big)+\underline{\beta}_{\sss \Psi}^\ssc[0].
	\end{align}
\end{enumerate}

\paragraph{Step 2: bounds stated in \refeq{analys-assumed-Remainder}.}
In the steps 2-5 we create bounds for the remainder terms, as defined in Section \ref{secDefinitionRewrite}.
For the bounding it is useful to note the explicit formulae for the remainder $\hRfz(k)$ and $\hRpz(k)$:
\begin{align}
	\lbeq{detailed-Def-Rfz}
	\hRfz(k)=& \sum_{n=2}^\infty \hat F_n(k)
	+\frac {\aa} {1-\aa^2}  \sum_{N=2}^\infty \sum_{\iota} (-1)^N\hat \Psi^{\ssc[N],\iota}_z(k) (\e^{-\ii k_{\iota}} -\aa)\\
	&+\frac {\aa} {1-\aa^2}  \sum_{N\in\{0,1\}}\sum_{\iota}	 (-1)^N 	 (\e^{-\ii k_{\iota}}\hat \Psi^{\ssc[N],\iota}_{{\sss R, I},z} (k) -\aa \hat \Psi^{\ssc[N],\iota}_{{\sss R, II},z} (k) ) \nnb
	&-\frac {\aa} {(1-\aa^2)^2} \sum_{\iota_0,\iota_1} \e^{-\ii (k_{\iota_{0}}+k_{\iota_{1}})}
	\Big(\hat \Pi^{\ssc[0],\iota_{0},\iota_{1}}_{{\sss R},z}(k)
	+\sum_{N=1}^\infty  (-1)^N \hat \Pi^{\ssc[N],\iota_{0},\iota_{1}}_z(k)\Big)\nnb
	&+\frac {\aa^2} {(1-\aa^2)^2} \sum_{\iota_0,\iota_1} \left(\e^{-\ii k_{\iota_1}}
	\hat \Pi^{-\iota_{0},\iota_{1}}(k)+\e^{-\ii k_{\iota_{0}}}\hat \Pi^{\iota_{0},\iota_{1}}(k)
	-\aa\hat \Pi^{-\iota_{0},\iota_{1}}(k)\right)\nnb
	&-\frac {\aa} {(1-\aa^2)^2} \sum_{\iota_0,\iota_1} \hat \Psi^{\iota_0}(k)
	\left(\e^{-\ii k_{\iota_{0}}}\hat \Pi^{\iota_{0},\iota_{1}}(k) -\aa\hat \Pi^{-\iota_{0},\iota_{1}}(k))\right)
	(\e^{-\ii k_{\iota_1}} -\aa).\nn
\end{align}
and
\begin{align}
	\lbeq{detailed-Def-Rpz}
	\hRpz(k)=&\sum_{N\in\{0,1\}}(-1)^N\hat \Xi^{\ssc[N]}_{{\sss R}}(k)
	+\sum_{N=2}^\infty (-1)^N \hat \Xi^{\ssc[N]}(k)+\sum_{n=1}^\infty \hat \Phi_n(k)\nnb
	&-\frac {\aaz} {1-\aaz^2}\sum_{\iota} \hat \Psi^{\iota}(k) (\hat \Xi^{\iota}(k)\e^{-\ii k_\iota}-\aaz\hat \Xi^{-\iota}(k))\nnb
	&- \frac {\aaz} {1-\aaz^2}\sum_{N=1}^\infty (-1)^{N}
	\sum_{\iota} (\hat \Xi^{\ssc[N],\iota}(k)\e^{-\ii k_\iota}-\aaz\hat \Xi^{\ssc[N],-\iota}(k))\nnb
	&- \frac {\aaz} {1-\aaz^2}  \sum_{\iota} (\hat \Xi^{\ssc[0],\iota}_{{\sss R, I}}(k)\e^{-\ii k_\iota}
	-\aaz\hat \Xi^{\ssc[0],-\iota}_{{\sss R, II}}(k)).
\end{align}
Now, we start by bounding $\hat F_n$ and $\hat \Phi_n$, for $n\geq 1$.
Starting from \refeq{deffnfunction}, we take the absolute value into the sums and use Assumption \ref{assCoefficentsRelation} for all $\Psi_z^\iota$ and $\Pi^{\iota,\kappa}_z$ to obtain
	\begin{align}
	\sum_{x\in\Zd} |F_{n}(x)|\leq&\aa \sum_x\sum_{\iota_0,\dots,\iota_n}\sum_{x_i\colon \sum_i x_i=x}
	\frac { (\delta_{x_0,0}+\tfrac {\aab}{\aa}|\Xi(x_0)|)}{1-\aa^2}\left(\frac { \aab} {1-\aa^2}\right)^{n} \nnb
	&\times\Big(\prod_{s=1}^{n-1} (|\Xi^{\iota_{s-1}}(x_s+\ve[\iota_{s-1}])|
	+\aa|\Xi^{-\iota_{s-1}}(x_s)|)\Big)\nnb
	&\times \big( |\Xi^{\iota_{n-1}}(x_n+\ve[\iota_{n-1}]+\ve[\iota_{n}])|
	+\aa|\Xi^{\iota_{n-1}}(x_n+\ve[\iota_{n-1}])| \nnb
	&\qquad\qquad\qquad\qquad+\aa|\Xi^{-\iota_{n-1}}(x_n+\ve[\iota_n])|
	+\aa^2|\Xi^{-\iota_{n-1}}(x_n)|\big).
	 \end{align}
As we sum over all $x$, the sums factorize and we obtain
	\begin{align}
	\sum_{x\in\Zd} |F_{n}(x)|\leq &\left( \frac {\aa} {1-\aa}\right)
	\Big(1+\tfrac {\aab}{\aa}\sum_{x}|\Xi(x)|\Big)
	\Big( \frac {\aab} {1-\aa}\Big)^{n}
	\Big(\prod_{s=1}^{n} \sum_{x_s,\iota_{s-1}}|\Xi^{\iota_{s-1}}(x_s)| \Big)
	\sum_{\iota_n}1\nnb
	\leq& \frac{2d \aa}{1-\aa} (1+\tfrac {\aab}{\aa} \beta_{{\sss \Xi}}^\ssss[abs])
	\left( \frac { 2d\aa\beta_{\sss \Xi^{\iota}}^\ssss[abs]} {1-\aa}\right)^{n}.
 	\end{align}
In the same way, we obtain
	\begin{align}
	\sum_{x\in\Zd} |\Phi_{n}(x)|\leq& \frac{2d\aa}{1-\aa} (1+\tfrac {\aab}{\aa}\beta_{{\sss \Xi}}^\ssss[abs])
	\left( \frac {2d\aab\beta_{\sss \Xi^{\iota}}^\ssss[abs]}
	{1-\aa}\right)^{n}\beta_{\sss \Xi^{\iota}}^\ssss[abs].
	\end{align}
Using the geometric sum, we compute that
	\begin{align}
	\sum_{n=2}^\infty\sum_{x\in\Zd} |F_{n}(x)|\leq &\frac{2d\aa}{1-\aa}
	\frac {1+\tfrac {\aab}{\aa}\beta_{{\sss \Xi}}^\ssss[abs]}
	{1- \frac {2d\aab\beta_{\sss \Xi^{\iota}}^\ssss[abs]} {1-\aa}}
	\Big( \frac { 2d\aab\beta_{\sss \Xi^{\iota}}^\ssss[abs]} {1-\aa}\Big)^{2},\\
	\sum_{n=1}^\infty\sum_{x\in\Zd} |\Phi_{n}(x)|
	\leq &\frac{2d\aa \beta_{\sss \Xi^{\iota}}^\ssss[abs]}{1-\aa}
	\frac {1+\tfrac {\aab}{\aa}\beta_{{\sss \Xi}}^\ssss[abs]}
	{1- \frac { 2d\aab\beta_{\sss \Xi^{\iota}}^\ssss[abs]} {1-\aa}}
 	\left(\frac { 2d\aab\beta_{\sss \Xi^{\iota}}^\ssss[abs]} {1-\aa}\right).
	\end{align}
Using these bounds, we obtain that $\Rfz$ (defined in \refeq{detailed-Def-Rfz}) is bounded by
	\begin{align}
	\sum_{x\in\Zd}|\Rfz (x)|\leq&\frac{2d\aa}{1-\aa} \frac {1+\tfrac {\aab}{\aa}\beta_{{\sss \Xi}}^\ssss[abs]}
	{1- \frac {2d\aab\beta_{\sss \Xi^{\iota}}^\ssss[abs]} {1-\aa}}
 	\Big( \frac { 2d\aab\beta_{\sss \Xi^{\iota}}^\ssss[abs]} {1-\aa}\Big)^{2}\nnb
	&+ \frac {2d\aa} {1-\aa^2} \Big(\Big[\sum_{N\in\{0,1\}} \beta_{\sss\Psi,R,I}^\ssc[N] 	+\aa\beta_{\sss\Psi,R,II}^\ssc[N]\Big] + \tfrac {\aab}{\aa}(1+\aa)\sum_{N=2}^\infty \beta_{\sss  \Xi}^\ssc[N]
	\Big)\nnb
	&+\frac {2d\aa} {(1-\aa^2)^2} \Big( \beta_{\sss\Pi,R}^\ssc[0]+
	2d\aab \sum_{N=1}^\infty \beta^{\ssc[N]}_{\sss \Xi^\iota}\Big)\nnb
	&+\frac {(2d\aa)^2\aab} {(1-\aa^2)^2}(2+\mu)\beta^{\ssss[abs]}_{\sss \Xi^\iota}
	+\frac {(2d)^2\aab^2} {\left(1-\aa\right)^2} \beta^{\ssss[abs]}_{\sss \Xi}
	\beta^{\ssss[abs]}_{\sss \Xi^\iota},
	\end{align}
and $\Rpz$ (given in \refeq{detailed-Def-Rpz}) by
	\begin{align}
	\sum_{x\in\Zd} |\Rpz(x)|	
	\leq&\beta_{\sss\Xi,R}^\ssc[0]+\beta_{\sss\Xi,R}^\ssc[1]
	+\sum_{N=2}^\infty\beta^{\ssc[N]}_{\sss \Xi}
	+\frac{2d\aa \beta_{\sss \Xi^{\iota}}^\ssss[abs]}{1-\aa}
	\frac {1+\tfrac {\aab}{\aa}\beta_{{\sss \Xi}}^\ssss[abs]}
	{1- \frac { 2d\aab\beta_{\sss \Xi^{\iota}}^\ssss[abs]} {1-\aa}}
 	\left(\frac { 2d\aab\beta_{\sss \Xi^{\iota}}^\ssss[abs]} {1-\aa}\right)\nnb
	&+ \frac {2d\aab} {1-\aa}\beta_{{\sss \Xi}}^\ssss[abs] \beta_{\sss \Xi^{\iota}}^\ssss[abs]
	+ \frac {2d\aa} 	
	{1-\aa^2}\Big(\beta_{\sss\Xi^\iota,R,I}^\ssc[0]+\aa\beta_{\sss\Xi^\iota,R,II}^\ssc[0]
	+(1+\aa)\sum_{N=1}^\infty\beta_{\sss \Xi^{\iota}}^\ssc[N]\Big).
	\end{align}
This proves the bounds stated in \refeq{analys-assumed-Remainder}.

\paragraph{Step 3: The first bound in \refeq{analys-assumed-displacement-1}.}
We call a sum as appearing on the left hand side of \refeq{analys-assumed-displacement-1} a \emph{weighted} sum, where the factor $\|x\|_2^2$ is called the \emph{weight} (see also Section \ref{sec-disc}). To  obtain a bound on weighted sums, we split the weight using   \refeq{Split-weight-ineq}.
We explain this now in detail for the following contribution to
$\sum_x\|x\|_2^2|\Rpz(x)|$:
	\begin{align}
	\lbeq{example-term-to-bound-rpz}
	\frac {\aa} {1-\aa^2}\sum_{x,y,\iota} \|x+y\|_2^2\left|\Psi^{\iota}(x)
	(\Xi^{\iota}(y-\ve[\iota])-\aa \Xi^{-\iota}(y))\right|.
	\end{align}
We use Assumption \ref{assCoefficentsRelation} to bound $\Psi^{\iota}_z$ by $\Xi_z$, and obtain
	\begin{align}
	\text{\refeq{example-term-to-bound-rpz}}
	\leq \frac {\aab} {1-\aa^2}\sum_{x,y,\iota}
	\|x+y\|_2^2|\Xi(x)| (|\Xi^{\iota}(y-\ve[\iota])|+\aa |\Xi^{-\iota}(y)|).
	\end{align}
Then, we rewrite $\|x\|_2^2$ using the equality in \refeq{Split-weight-ineq} as
	\begin{align}
	\text{\refeq{example-term-to-bound-rpz}} \leq& \frac {\aab} {1-\aa^2}
	\sum_{x} \|x\|_2^2|\Xi(x)| \sum_{y,\iota} (1+\aa)|\Xi^{\iota}(y)|\nnb
	&   + \frac {\aab} {1-\aa^2}\sum_{x} |\Xi(x)| \sum_{y,\iota} \|y\|_2^2(|\Xi^{\iota}(y-\ve[\iota])|
	+\aa |\Xi^{-\iota}(y)|)\nnb
	&   + \frac {2\aab} {1-\aa^2}\sum_{i=1}^d\sum_{x}x_i|\Xi(x)| \sum_{y,\iota}y_i(|\Xi^{\iota}(y-\ve[\iota])|
	+\aa |\Xi^{-\iota}(y)|).
	\end{align}
Since $\Xi$ is totally rotationally symmetric, we know that $\sum_{x}x_i|\Xi(x)|=0$.
Thus, the last term cancels. We apply the bounds of Assumption  \ref{assDiagBoundsCoeff} to obtain
	\begin{align}
	\refeq{example-term-to-bound-rpz}
	\leq \frac {\aab} {1-\aa^2} \left((1+\aa)\beta_{{\sss \Delta \Xi}}^\ssss[abs]
	\beta_{\sss \Xi^{\iota}}^\ssss[abs]
	+\beta_{{\sss \Xi}}^\ssss[abs] (\beta_{\sss \Delta \Xi^{\iota},\iota}^\ssss[abs]
	+\aa\beta_{{\sss \Delta \Xi^{\iota},0}}^\ssss[abs])\right).
	\end{align}
Now we extend this idea. First we use Assumption \ref{assCoefficentsRelation} to obtain
	\begin{align}
	\sum_x \|x\|_2^2|\Phi_{n}(x)|\leq& \frac {\aa}{\aab}
	\left(\frac {\aab}{1-\aa^2} \right)^{n+1}
	\sum_{x_0,\dots,x_{n+1}} \|\sum_{i=0}^Nx_i\|_2^2 (\delta_{0,x_0}+\tfrac {\aab}{\aa}|\Xi(x_0)|) \nnb
	&\times \prod_{s=1}^{n+1} \Big[\sum_{\iota_{s-1}}\left(|\Xi^{\iota_{s-1}}(x_s+\ve[\iota_{s-1}])|
	+ \aa|\Xi^{-\iota_{s-1}}(x_s)\right)\Big].
	\end{align}
Then, we split the weight using the equality in \refeq{Split-weight-ineq}. We note that  $x\mapsto \sum_{\iota} \Xi^{\iota}(x)$ is totally rotationally symmetric so that any contribution due to the second sum in the equality in  \refeq{Split-weight-ineq} cancels. After the split of the weight, the sums factor and we obtain
	\begin{align}
	\sum_x \|x\|_2^2|\Phi_{n}(x)|\leq&
	\left(\frac {\aab(1+\aa)}{1-\aa^2} \right)^{n+1}
	\sum_{x} \|x\|_2^2 |\Xi(x)|  \Big(\sum_{\iota,x}|\Xi^{\iota}(x)|\Big)^{n+1}\\
	&+(n+1)
	\left(\frac {\aab(1+\aa)}{1-\aa^2} \right)^{n+1}
	\sum_{x} (\frac {\aa}{\aab}\delta_{0,x}+ |\Xi(x)|)
	\Big(\sum_{\iota,x}|\Xi^{\iota}(x)|\Big)^{n}\nnb
	&\quad\qquad\times \Big[\sum_{\iota,x} \|x\|_2^2 \left(|\Xi^{\iota}(x+\ve[\iota])|
	+ \aa|\Xi^{-\iota}(x)\right)\Big]\nn\\
	\leq &\left(\frac {2d\aab}{1-\aa} \right)^{n+1}
	(\beta_{\sss \Xi^{\iota}}^\ssss[abs])^{n}
	\Big( \beta_{{\sss \Delta \Xi}}^\ssss[abs] \beta_{\sss \Xi^{\iota}}^\ssss[abs]
	+\frac{n+1}{1+\aa}(\tfrac {\aa}{\aab}+\beta_{{\sss \Xi}}^\ssss[abs])
	\left[\beta_{\sss \Delta \Xi^{\iota},\iota}^\ssss[abs]+\aa\beta_{{\sss \Delta
	\Xi^{\iota},0}}^\ssss[abs]\right]\Big).\nn
	\end{align}
Other contributions to $\Rpz$ (recall \refeq{detailed-Def-Rpz}) are bounded in a similar straightforward manner, leading to the final bound
	\begin{align}
	\lbeq{detailed-Bound-Rpz}
	\sum_{x}\|x\|_2^2|\Rpz(x)|\leq& \beta_{\Delta \sss\Xi,R}^\ssc[0]
	+\beta_{\Delta \sss\Xi,R}^\ssc[1]+
	\sum_{N=2}^\infty \beta_{{\sss \Delta\Xi}}^\ssc[N]\\
	&+\sum_{n=1}^\infty\left(\frac {2d\aab}{1-\aa} \right)^{n+1}
	(\beta_{\sss \Xi^{\iota}}^\ssss[abs])^{n}
	\Big( \beta_{{\sss \Delta \Xi}}^\ssss[abs] \beta_{\sss \Xi^{\iota}}^\ssss[abs]
	+\frac{n+1}{1+\aa}(\tfrac {\aa}{\aab}+\beta_{{\sss \Xi}}^\ssss[abs])
	\left[\beta_{\sss \Delta \Xi^{\iota},\iota}^\ssss[abs]
	+\aa\beta_{{\sss \Delta \Xi^{\iota},0}}^\ssss[abs]\right]\Big)\nnb
	&+\frac {2d\aab} {1-\aa^2} \left((1+\aa)\beta_{{\sss \Delta \Xi}}^\ssss[abs]
	\beta_{\sss \Xi^{\iota}}^\ssss[abs]
	+\beta_{{\sss \Xi}}^\ssss[abs] (\beta_{\sss \Delta \Xi^{\iota},\iota}^\ssss[abs]
	+\aa\beta_{{\sss \Delta \Xi^{\iota},0}}^\ssss[abs])\right)\nnb
	&+\frac {2d\aa} {1-\aa^2}
	\Big( \beta_{\Delta \sss\Xi^\iota,R,I}^\ssc[0]
	+\aa \beta_{\Delta \sss\Xi^\iota,R,II}^\ssc[0]+
	\sum_{N=1}^\infty
	(\beta_{\sss \Delta \Xi^{\iota},\iota}^\ssss[N]+\aa \beta_{{\sss \Delta \Xi^{\iota},0}}^\ssss[N])\Big).\nn
	\end{align}
This proves \refeq{analys-assumed-displacement-1} with $\beta_{\sss \Delta
R,\Phi}$ equal to the right-hand side of \refeq{detailed-Bound-Rpz}.

\paragraph{Step 4: The second bound in \refeq{analys-assumed-displacement-1}.}
We bound the weighted sum of $\Rfz$ in the same way as for $\Rpz$. We require the following three additional bounds:
	\begin{align}
	\lbeq{weightedPsiKBound}
 	\sum_{x\in\Zd}\sum_{\kappa}\|x\|_2^2 \Psi^{\ssc[N],\kappa}(x+\ve[\kappa])
 	\leq& 2d \tfrac {\aab}{\aa} \left( \beta_{{\sss \Delta \Xi}}^\ssc[N]
	+\beta_{{\sss \Xi^{\iota}}}^\ssc[N]\right),\\
 	\lbeq{weightedPiKBound}
	\sum_{x,\iota,\kappa}\|x\|_2^2 \Pi^{\ssc[N],\iota,\kappa}(x+\ve[\kappa])
	\leq &(2d)^2\aab \left( \beta_{{\sss \Delta \Xi^{\iota},0}}^\ssc[N]
	+ \beta_{{\sss \Xi^{\iota}}}^\ssc[N]\right),\\
	\lbeq{weightedPiIKBound}
	\sum_{x,\iota,\kappa}\|x\|_2^2 \Pi^{\ssc[N],\iota,\kappa}(x+\ve[\iota]+\ve[\kappa])
	\leq & (2d)^2\aab \left( \beta_{\sss \Delta \Xi^{\iota},\iota}^\ssc[N]
	+ \beta_{{\sss \Xi^{\iota}}}^\ssc[N]\right).
	\end{align}
Next we explain how to derive these bounds for the example of \refeq{weightedPsiKBound}.

First, we use \refeq{XidominatespsiImproved} and then \refeq{Split-weight-ineq} to obtain
	\begin{align}
	\sum_{x\in\Zd}\sum_{\kappa}\|x\|_2^2 \Psi^{\ssc[N],\kappa}(x+\ve[\kappa])
	\leq& \tfrac {\aab}{\aa}
	\sum_{x\in\Zd}\sum_{\kappa} (\|x+\ve[\kappa]\|_2^2+\|\ve[\kappa]\|_2^2+x_\kappa)
	\Xi^{\ssc[N]}(x+\ve[\kappa]).
	\end{align}
Then, we note that all terms containing a single $x_\kappa$ cancel by the total rotational symmetry of $\Xi$ and $\sum_{\iota}\Xi^\iota$, i.e.,
	\begin{align}
	\sum_{x,\kappa}x_\kappa\Xi^{\ssc[N]}(x+\ve[\kappa])
	=\sum_{x,\kappa,\iota}x_\kappa\Xi^{\ssc[N],\iota}(x+\ve[\kappa])
	=\sum_{x,\kappa,\iota}x_\kappa\Xi^{\ssc[N],\iota}(x+\ve[\iota]+\ve[\kappa])=0.
	\end{align}
Applying the bounds of Assumption \ref{assDiagBoundsCoeff},  we obtain \refeq{weightedPsiKBound}.

For $n\geq 2$, we bound $F_n$ as we have bounded $\Phi_n$ in Step 3. Indeed, first, we use that
	\begin{align}
	\sum_{x\in\Zd} \|x\|_2^2 |F_{n}(x)|\leq &
 	\sum_{\iota_0,\dots,\iota_n}\sum_{x_0,\dots,x_n} \big\|
	\sum_{i=0}^n x_i\big\|_2^2(\frac {\aa}{\aab}+|\Xi(x_0)|) \left( \frac {\aab } {1-\aa^2}\right)^{n+1}\nnb
	&\times \Big(\prod_{s=1}^{n-1} (|\Xi^{\iota_{s-1}}(x_s+\ve[\iota_{s-1}])|
	+\aa|\Xi^{-\iota_{s-1}}(x_s))|\Big)\nnb
	&\times \big( |\Xi^{\iota_{n-1}}(x_n+\ve[\iota_{n-1}]+\ve[\iota_{n}])|
	+\aa|\Xi^{\iota_{n-1}}(x_n+\ve[\iota_{n-1}])| \nnb
	&\qquad\qquad\qquad\qquad
	+\aa|\Xi^{\iota_{n-1}}(x_n+\ve[\iota_n])|+\aa^2|\Xi^{-\iota_{n-1}}(x_n)|\big).
 	\end{align}
We split the weight using the equality in \refeq{Split-weight-ineq}, use that the sums factor and then use the bounds stated above to obtain
	\begin{align}
	\sum_{x\in\Zd} \|x\|_2^2 |F_{n}(x)|\leq &
	\left(\frac {2d\aab}{1-\aa} \right)^{n+1}
	\beta_{{\sss \Delta \Xi}}^\ssss[abs]\left(\beta_{{\sss \Xi^{\iota}}}^\ssss[abs]\right)^n\nnb
	&+ (n-1)\left(\frac {2d\aab}{1-\aa} \right)^{n+1}
	\frac {\left(\beta_{{\sss \Xi^{\iota}}}^\ssss[abs]\right)^{n-1}}{1+\aa}
	(\tfrac {\aa}{\aab}+\beta_{{\sss \Xi}}^\ssss[abs])
	(\beta_{\sss \Delta \Xi^{\iota},\iota}^\ssss[abs]
	+\aa\beta_{{\sss \Delta \Xi^{\iota}},0}^\ssss[abs])\nnb
	& +\left(\frac {2d\aab}{1-\aa} \right)^{n+1}
	\frac {\left(\beta_{{\sss \Xi^{\iota}}}^\ssss[abs]\right)^{n-1} }{1+\aa}
	(\tfrac {\aa}{\aab}+\beta_{{\sss \Xi}}^\ssss[abs])
	\left(\beta_{\sss \Delta \Xi^{\iota},\iota}^\ssss[abs]
	+\aa \beta_{{\sss \Delta \Xi^{\iota},0}}^\ssss[abs]+\beta_{{\sss \Xi^{\iota}}}^\ssss[abs]\right).
 	\end{align}
The additional contribution $\Rfz$ (recall \refeq{detailed-Def-Rfz}) are bounded in a straightforward manner and we obtain the bound
	\begin{align}
	\sum_{x\in\Zd} \|x\|_2^2 |\Rfz(x)|\leq
	&\sum_{n=2}^\infty\left(\frac {2d\aab}{1-\aa} \right)^{n+1}
	\beta_{{\sss \Delta \Xi}}^\ssss[abs]
	\left(\beta_{{\sss \Xi^{\iota}}}^\ssss[abs]\right)^n
	\lbeq{analys-assumed-displacement-2-computed}\\
 	&+\sum_{n=2}^\infty
	(n-1)\left(\frac {2d\aab}{1-\aa} \right)^{n+1}
	\frac {\left(\beta_{{\sss \Xi^{\iota}}}^\ssss[abs]\right)^{n-1}}{1+\aa}
	(\tfrac {\aa}{\aab}+\beta_{{\sss \Xi}}^\ssss[abs])
	(\beta_{\sss \Delta \Xi^{\iota},\iota}^\ssss[abs]+\aa\beta_{{\sss \Delta\Xi^{\iota}},0}^\ssss[abs])
	\nnb
	&+\sum_{n=2}^\infty
	\left(\frac {2d\aab}{1-\aa} \right)^{n+1}\frac {\left(\beta_{{\sss \Xi^{\iota}}}^\ssss[abs]\right)^{n-1}}
	{1+\aa} (\tfrac {\aa}{\aab}+\beta_{{\sss \Xi}}^\ssss[abs])
	\left(\beta_{\sss \Delta \Xi^{\iota},\iota}^\ssss[abs]
	+\aa \beta_{{\sss \Delta \Xi^{\iota},0}}^\ssss[abs]+\beta_{{\sss \Xi^{\iota}}}^\ssss[abs]\right)
	\nnb
	&+\frac {2d\aa} {1-\aa^2}  \Big[
 	\sum_{N\in \{0,1\}}
 	\left(\beta_{\Delta \sss\Psi,R,I}^\ssc[N]+\beta_{\Delta \sss\Psi,R,II}^\ssc[N]\right)+
	\tfrac {\aab}{\aa} \sum_{N=2}^\infty
	\left(\beta_{{\sss \Delta \Xi}}^\ssc[N]+\beta_{\sss \Xi}^\ssc[N]
	+\aa\beta_{{\sss \Delta \Xi}}^\ssc[N]\right)\Big]\nnb
	&+\frac {\aa} {(1-\aa^2)^2} \Big(\beta_{\Delta\sss\Pi,R}^\ssc[0]
	+(2d)^2\aab \sum_{N=1}^\infty\left( \beta_{\sss \Delta \Xi^{\iota},\iota}^\ssc[N]
	+ \beta_{{\sss \Xi^{\iota}}}^\ssc[N]\right)\Big)\nnb
	&+\frac {(2d)^2\aa^2\aab} {(1-\aa^2)^2}
	\left( \beta_{\sss \Delta \Xi^{\iota},\iota}^\ssss[abs]
	+\beta_{{\sss \Delta \Xi^{\iota},0}}^\ssss[abs]
	+ \beta_{{\sss \Xi^{\iota}}}^\ssss[abs]+\aa\beta_{{\sss \Delta \Xi^{\iota}},0}^\ssss[abs]\right)\nnb
	&+\frac {(2d)^2\aab^2} {(1-\aa)^2} \beta_{{\sss \Delta \Xi}}^\ssss[abs]
	\beta_{{\sss \Xi^{\iota}}}^\ssss[abs]
	+\frac{(2d)^2\aab^2} {(1-\aa^2)(1-\aa)} \beta_{{\sss \Xi}}^\ssss[abs]
	\left(\beta_{\sss \Delta \Xi^{\iota},\iota}^\ssss[abs]
	+\aa \beta_{{\sss \Delta \Xi^{\iota},0}}^\ssss[abs]
	+\beta_{{\sss \Xi^{\iota}}}^\ssss[abs]\right).\nn
	\end{align}
We complete Step 4 by defining $\betadeltaRfz$ as the right-hand of \refeq{analys-assumed-displacement-2-computed}.

\paragraph{Step 5: The bounds in \refeq{analys-assumed-displacement-3}.}
Now we compute a lower bound on $\hRfz(0)-\hRfz(k)$:
\begin{align}
\lbeq{RFlower-function}
  \hRfz(0)-\hRfz(k)=\sum_{x} \Rfz(x)[1-\cos(k\cdot x)]
\end{align}
which could also be negative.  We could apply Lemma \ref{lemmaFdifferenceToX} and bound \refeq{RFlower-function} from below by $-\betadeltaRfz[1-\hat D(k)]$.
Since this bound plays a central role, we prefer to use a better bounds. For this, we note that $[1-\cos(k\cdot x)]\geq 0$ for all $k$ and $x$.
So we can create a lower bound by only consider the negative part of the function $\Rfz$.\\
We are not able to identify the points $x$ at which $\Rfz(x)$ is negative. However, we can decompose all summands defining $\hRfz$ in \refeq{detailed-Def-Rfz}
into a negative and a positive part as the coefficients were defined via alternating sequences.
In this ways, we create the lower bound as follows:
\begin{align}
	-\frac {\aa} {(1-\aa^2)^2} \sum_{N=1}^\infty &\sum_{\iota_0,\iota_1} (-1)^N \left[ \hat \Pi^{\ssc[N],\iota_{0},\iota_{1}}(0)-
	\e^{-\ii (k_{\iota_{0}}+k_{\iota_{1}})}\hat \Pi^{\ssc[N],\iota_{0},\iota_{1}}(k)\right]
\nnb
	&\geq -\frac {\aa} {(1-\aa^2)^2} \sum_{N=1}^\infty \sum_{\iota_0,\iota_1} \left[ \hat \Pi^{\ssc[2N],\iota_{0},\iota_{1}}(0)-\e^{-\ii (k_{\iota_{0}}+k_{\iota_{1}})}\hat \Pi^{\ssc[2N],\iota_{0},\iota_{1}}(k)\right]\nnb
	&\geq -\frac {(2d)^2\aa\aab} {(1-\aa^2)^2} \sum_{N=1}^\infty
	\left(\beta_{\sss \Delta \Xi^{\iota},\iota}^\ssc[2N]+2d \beta_{{\sss \Xi^{\iota}}}^\ssc[2N]\right)[1-\hat D(k)],
\end{align}
where we applied Lemma \ref{lemmaFdifferenceToX} and \refeq{weightedPiIKBound} only in the very last step. We use this idea to bound all terms in \refeq{detailed-Def-Rfz}, except the minor term $\sum_{n=2}^\infty \hat F_n(k)$ and in this way obtain
\begin{align}
	-  \frac{\hRfz(0)-\hRfz(k)}{1-\hat D(k)}
	\leq&\sum_{n=2}^\infty\left(\frac {2d\aab}{1-\aa} \right)^{n+1}  \beta_{{\sss \Delta \Xi}}^\ssss[abs]
	\left(\beta_{{\sss \Xi^{\iota}}}^\ssss[abs]\right)^n
	\lbeq{analys-assumed-displacement-3-computed}\\
	 &+\sum_{n=2}^\infty (n-1)\left(\frac {2d\aab}{1-\aa} \right)^{n+1}
	\frac {\left(\beta_{{\sss \Xi^{\iota}}}^\ssss[abs]\right)^{n-1}}{1+\aa}
	(\tfrac {\aa}{\aab}+\beta_{{\sss \Xi}}^\ssss[abs])
	(\beta_{\sss \Delta \Xi^{\iota},\iota}^\ssss[abs]+\aa\beta_{{\sss \Delta\Xi^{\iota}},0}^\ssss[abs])
 	\nnb
	& +\sum_{n=2}^\infty
	\left(\frac {2d\aab}{1-\aa} \right)^{n+1}
	\frac {\left(\beta_{{\sss \Xi^{\iota}}}^\ssss[abs]\right)^{n-1} }{1+\aa}
	(\tfrac {\aa}{\aab}+\beta_{{\sss \Xi}}^\ssss[abs])
	\left(
	\beta_{\sss \Delta \Xi^{\iota},\iota}^\ssss[abs]
	+\aa \beta_{{\sss \Delta\Xi^{\iota},0}}^\ssss[abs]+\beta_{{\sss \Xi^{\iota}}}^\ssss[abs]\right)
	\nnb
	&+\frac {2d\aa} {1-\aa^2}
	\left[\beta_{\Delta\sss\Psi,R,I}^\ssc[1]+\aa\beta_{\Delta\sss\Psi,R,II}^\ssc[0]+
	\tfrac {\aab}{\aa} \sum_{N=1}^\infty \left(\beta_{{\sss \Delta \Xi}}^\ssc[2N+1]
	+ \beta_{{\sss \Xi}}^\ssc[2N+1]+\aa\beta_{{\sss \Delta \Xi}}^\ssc[2N]\right)\right]\nnb
 	&+\frac {\aa} {(1-\aa^2)^2} \left(
 	\beta_{\Delta\sss\Pi,R}^\ssc[0]
	+(2d)^2\aab\sum_{N=1}^\infty\left( \beta_{\sss \Delta \Xi^{\iota},\iota}^\ssc[2N]
	+ \beta_{{\sss \Xi^{\iota}}}^\ssc[2N]\right)\right)\nnb
	&+\frac {(2d\aa)^2\aab} {(1-\aa^2)^2}
	\left( \beta_{\sss \Delta \Xi^{\iota},\iota}^\ssss[odd]
	+\beta_{{\sss \Delta \Xi^{\iota},0}}^\ssss[odd]
	+\beta_{{\sss \Xi^{\iota}}}^\ssss[odd]+\aa\beta_{{\sss \Delta
	\Xi^{\iota}},0}^\ssss[even]\right)\nnb
	&+\frac {(2d\aab)^2} {(1-\aa^2)^2}
	\left(\beta_{{\sss \Delta \Xi}}^\ssss[odd]\beta_{{\sss \Xi^{\iota}}}^\ssss[odd](1+\aa^2)
	+2\aa\beta_{{\sss \Delta \Xi}}^\ssss[even]\beta_{{\sss \Xi^{\iota}}}^\ssss[even]\right)\nnb
	&+\frac{(2d\aab)^2} {(1-\aa^2)^2} \beta_{{\sss \Xi}}^\ssss[odd]
	\left(\beta_{\sss \Delta \Xi^{\iota},\iota}^\ssss[odd]+\beta_{{\sss \Xi^{\iota}}}^\ssss[odd]
	+\aa\beta_{{\sss \Delta \Xi^{\iota},0}}^\ssss[even]+\aa\beta_{{\sss \Xi^{\iota}}}^\ssss[even]
	+\aa\beta_{\sss \Delta \Xi^{\iota},\iota}^\ssss[even]+\aa^2\beta_{{\sss \Delta
	\Xi^{\iota}},0}^\ssss[odd]\right)\nnb
	&+\frac{(2d\aab)^2} {(1-\aa^2)^2} \beta_{{\sss \Xi}}^\ssss[even]
	\left(\beta_{\sss \Delta \Xi^{\iota},\iota}^\ssss[even]+\beta_{{\sss \Xi^{\iota}}}^\ssss[even]
	+\aa\beta_{{\sss \Delta \Xi^{\iota},0}}^\ssss[odd]+\aa\beta_{{\sss \Xi^{\iota}}}^\ssss[odd]
	+\aa\beta_{\sss \Delta \Xi^{\iota},\iota}^\ssss[odd]+\aa^2\beta_{{\sss \Delta \Xi^{\iota},0}}^\ssss[even]
	\right).\nn
\end{align}
We modified the desired inequality \refeq{analys-assumed-displacement-3} here slightly for better readability.
This proves the lower bound in  \refeq{analys-assumed-displacement-3} with $-\betadeltaRfzlow$
equal to the right-hand side of \refeq{analys-assumed-displacement-3-computed}.


{\small
\bibliographystyle{plain}
\bibliography{NoBLEBiB}

\begin{thebibliography}{10}

\bibitem{AbrSte92}
M.~Abramowitz and I.A. Stegun, editors.
\newblock {\em Handbook of mathematical functions with formulas, graphs, and
  mathematical tables}.
\newblock Dover Publications Inc., New York, (1992).
\newblock Reprint of the 1972 edition.

\bibitem{AizBar87}
M.~Aizenman and D.J. Barsky.
\newblock Sharpness of the phase transition in percolation models.
\newblock {\em Commun. Math. Phys.}, {\bf 108}:489--526, (1987).

\bibitem{AizNew84}
M.~Aizenman and C.M. Newman.
\newblock Tree graph inequalities and critical behavior in percolation models.
\newblock {\em J. Stat. Phys.}, {\bf 36}:107--143, (1984).

\bibitem{BarAiz91}
D.J. Barsky and M.~Aizenman.
\newblock Percolation critical exponents under the triangle condition.
\newblock {\em Ann. Probab.}, {\bf 19}:1520--1536, (1991).

\bibitem{BerKea84}
J.~van~den Berg and M.~Keane.
\newblock On the continuity of the percolation probability function.
\newblock In {\em Conference in modern analysis and probability ({N}ew {H}aven,
  {C}onn., 1982)}, volume~{\bf 26} of {\em Contemp. Math.}, pages 61--65. Amer.
  Math. Soc., Providence, RI, (1984).

\bibitem{BolRio06}
B.~Bollob{\'a}s and O.~Riordan.
\newblock {\em Percolation}.
\newblock Cambridge University Press, New York, (2006).

\bibitem{BorChaHofSla99}
C.~Borgs, J.~Chayes, R.~van~der Hofstad, and G.~Slade.
\newblock Mean-field lattice trees.
\newblock {\em Ann. Comb.}, {\bf 3}(2-4):205--221, (1999).
\newblock On combinatorics and statistical mechanics.

\bibitem{BorChaHofSlaSpe05b}
C.~Borgs, J.~Chayes, R.~van~der Hofstad, G.~Slade, and J.~Spencer.
\newblock Random subgraphs of finite graphs. {II}. {T}he lace expansion and the
  triangle condition.
\newblock {\em Ann. Probab.}, {\bf 33}(5):1886--1944, (2005).

\bibitem{BovFroGla86b}
A.~Bovier, J.~Fr{\"o}hlich, and U.~Glaus.
\newblock Branched polymers and dimensional reduction.
\newblock In {\em Ph\'enom\`enes critiques, syst\`emes al\'eatoires, th\'eories
  de jauge, Part I, II (Les Houches, 1984)}, pages 725--893. North-Holland,
  Amsterdam, (1986).

\bibitem{BroHam57}
S.~R. Broadbent and J.~M. Hammersley.
\newblock Percolation processes. {I}. {C}rystals and mazes.
\newblock {\em Proc. Cambridge Philos. Soc.}, {\bf 53}:629--641, (1957).

\bibitem{BrySla14a}
D.C. Brydges and G.~Slade.
\newblock A renormalisation group method. {I}. {G}aussian integration and
  normed algebras.
\newblock {\em J. Statist. Phys.}, {\bf 159}(3):421--460, (2015).

\bibitem{BrySpe85}
D.C. Brydges and T.~Spencer.
\newblock Self-avoiding walk in 5 or more dimensions.
\newblock {\em Commun. Math. Phys.}, {\bf 97}:125--148, (1985).

\bibitem{ChaCha87b}
J.~T. Chayes and L.~Chayes.
\newblock On the upper critical dimension of {B}ernoulli percolation.
\newblock {\em Comm. Math. Phys.}, {\bf 113}(1):27--48, (1987).

\bibitem{FitNoblePage}
R.~Fitzner.
\newblock www.fitzner.nl/noble/.

\bibitem{Fit13}
R.~Fitzner.
\newblock Non-backtracking lace expansion.
\newblock {\em PhD. thesis, TU Eindhoven}, (2013).

\bibitem{FitHof13a}
R.~Fitzner and R.~van~der Hofstad.
\newblock Non-backtracking random walk.
\newblock {\em J. Statist. Phys.}, {\bf 150}(2):264--284, 2013.

\bibitem{FitHof13g}
R.~Fitzner and R.~van~der Hofstad.
\newblock Mean-field behaviour for lattice animals for $d\geq 18$ and lattice
  trees for $d\geq 16$.
\newblock In preparation (2015).

\bibitem{FitHof13d}
R.~Fitzner and R.~van~der Hofstad.
\newblock Mean-field behavior for nearest-neighbor percolation in $d > 10$.
\newblock Preprint (2015).

\bibitem{GradKyz94}
I.S. Gradshteyn and I.M. Ryzhik.
\newblock {\em Table of Integrals, Series, and Products}.
\newblock Cambridge Studies in Advanced Mathematics. Academic press, fifth
  edition, (1994).

\bibitem{Grim99}
G.~Grimmett.
\newblock {\em Percolation}.
\newblock Springer, Berlin, 2nd edition, (1999).

\bibitem{GriKes12}
G.~Grimmett and H.~Kesten.
\newblock Percolation since {S}aint-{F}lour.
\newblock Preprint (2012).

\bibitem{Hara08}
T.~Hara.
\newblock Decay of correlations in nearest-neighbor self-avoiding walk,
  percolation, lattice trees and animals.
\newblock {\em Ann. Probab.}, {\bf 36}(2):530--593, (2008).

\bibitem{HarSla89a}
T.~Hara and G.~Slade.
\newblock The mean-field critical behaviour of percolation in high dimensions.
\newblock In B.~Simon, A.~Truman, and I.M. Davies, editors, {\em Proceedings of
  the IXth International Congress on Mathematical Physics}, pages 450--453,
  Bristol and New York, (1989).

\bibitem{HarSla90a}
T.~Hara and G.~Slade.
\newblock Mean-field critical behaviour for percolation in high dimensions.
\newblock {\em Commun. Math. Phys.}, {\bf 128}:333--391, (1990).

\bibitem{HarSla90b}
T.~Hara and G.~Slade.
\newblock On the upper critical dimension of lattice trees and lattice animals.
\newblock {\em J. Stat. Phys.}, {\bf 59}:1469--1510, (1990).

\bibitem{HarSla91}
T.~Hara and G.~Slade.
\newblock Critical behaviour of self-avoiding walk in five or more dimensions.
\newblock {\em Bull. A.M.S.}, {\bf 25}:417--423, (1991).

\bibitem{HarSla92b}
T.~Hara and G.~Slade.
\newblock The lace expansion for self-avoiding walk in five or more dimensions.
\newblock {\em Reviews in Math.\ Phys.}, {\bf 4}:235--327, (1992).

\bibitem{HarSla92a}
T.~Hara and G.~Slade.
\newblock Self-avoiding walk in five or more dimensions. {I.} {The} critical
  behaviour.
\newblock {\em Commun.\ Math.\ Phys.}, {\bf 147}:101--136, (1992).

\bibitem{HarSla94}
T.~Hara and G.~Slade.
\newblock Mean-field behaviour and the lace expansion.
\newblock In G.\ Grimmett, editor, {\em Probability and Phase Transition},
  Dordrecht, (1994). Kluwer.

\bibitem{HarTas87}
T.~Hara and H.~Tasaki.
\newblock Critical behavior in a system of branched polymers.
\newblock {\em Progress of Theoretical Physics Supplement}, 92:14--25, 1987.

\bibitem{HeyHof15}
M.~Heydenreich and R.~van~der Hofstad.
\newblock Progress in high-dimensional percolation and random graphs.
\newblock {\em Lecture notes for the CRM-PIMS Summer School in Probability},
  Preprint (2016).

\bibitem{HeyHofSak08}
M.~Heydenreich, R.~van~der Hofstad, and A.~Sakai.
\newblock Mean-field behavior for long- and finite range {I}sing model,
  percolation and self-avoiding walk.
\newblock {\em J. Statist. Phys.}, {\bf 132}(5):1001--1049, (2008).

\bibitem{HofSak04}
R.~van~der Hofstad and A.~Sakai.
\newblock Gaussian scaling for the critical spread-out contact process above
  the upper critical dimension.
\newblock {\em Electron. J. Probab.}, {\bf 9}:710--769 (electronic), (2004).

\bibitem{HofSla02}
R.~van~der Hofstad and G.~Slade.
\newblock A generalised inductive approach to the lace expansion.
\newblock {\em Probab. Theory Related Fields}, {\bf 122}(3):389--430, (2002).

\bibitem{Hugh95}
B.~D. Hughes.
\newblock {\em Random walks and random environments. {V}ol. 1}.
\newblock Oxford Science Publications. The Clarendon Press Oxford University
  Press, New York, (1995).

\bibitem{Hugh96}
B.~D. Hughes.
\newblock {\em Random walks and random environments. {V}ol. 2}.
\newblock Oxford Science Publications. The Clarendon Press Oxford University
  Press, New York, (1996).

\bibitem{Lawl10}
G.~F. Lawler and Limic V.
\newblock {\em Random Walk: A Modern Introduction}.
\newblock Cambridge Studies in Advanced Mathematics. Cambridge University
  Press, (2010).

\bibitem{LubIsa79}
T.~C. Lubensky and Joel Isaacson.
\newblock Statistics of lattice animals and dilute branched polymers.
\newblock {\em Phys. Rev. A}, {\bf 20}(5):2130--2146, Nov (1979).

\bibitem{Mens86}
M.V. Menshikov.
\newblock Coincidence of critical points in percolation problems.
\newblock {\em Soviet Mathematics, Doklady}, {\bf 33}:856--859, (1986).

\bibitem{Russ78}
L.~Russo.
\newblock A note on percolation.
\newblock {\em Zeitschrift f\"ur Wahrscheinlichkeitstheorie und Verwandte
  Gebiete}, {\bf 43}(1):39--48, (1978).

\bibitem{Saka01}
A.~Sakai.
\newblock Mean-field critical behavior for the contact process.
\newblock {\em J. Statist. Phys.}, {\bf 104}(1-2):111--143, (2001).

\bibitem{Saka07}
A.~Sakai.
\newblock Lace expansion for the {I}sing model.
\newblock {\em Comm. Math. Phys.}, {\bf 272}(2):283--344, (2007).

\bibitem{Slad06}
G.~Slade.
\newblock {\em The lace expansion and its applications}, volume~{\bf 1879} of
  {\em Lecture Notes in Mathematics}.
\newblock Springer-Verlag, Berlin, (2006).

\bibitem{Spit76}
F.~Spitzer.
\newblock {\em Principles of Random Walk}.
\newblock Springer, New York, 2nd edition, (1976).

\bibitem{Tasa87}
Hal Tasaki.
\newblock Hyperscaling inequalities for percolation.
\newblock {\em Communications in Mathematical Physics}, 113(1):49--65, 1987.

\bibitem{Tou74}
G.~Toulouse.
\newblock Perspectives from the theory of phase transitions.
\newblock {\em Il Nuovo Cimento}, {\bf 24}:234--240, (1974).

\end{thebibliography}
}
\end{document}